\newcommand{\Z}{\mathbb{Z}}
\newcommand{\R}{\mathbb{R}}
\newcommand{\C}{\mathbb{C}}
\newcommand{\E}{\mathbb{E}}
\newcommand{\mc}{\mathcal}
\newcommand{\eps}{\varepsilon}
\newcommand{\ind}{{\bf 1}}
\DeclareMathOperator{\grad}{grad}
\newcommand{\TT}{\Lambda}
\newcommand{\Lap}{\Delta\!}
\newcommand{\m}{\mathbf{m}}
\newcommand{\area}{\omega}
\newcommand{\MeaSp}{\mathcal{X}}
\newcommand{\EE}{\mc{E}_{\MeaSp}}
\def\Wick#1{{:}{#1}{:}}
\def\M{M_\phi}
\def\TestSp{\mathcal C}
\def\TestSe{\tilde{\mathcal C}}
\def\CX{{\mathcal C}_{\mathcal X}}
\newcommand{\vertiii}[1]{{\left\vert\kern-0.25ex\left\vert\kern-0.25ex\left\vert #1 
    \right\vert\kern-0.25ex\right\vert\kern-0.25ex\right\vert}}
\newtheorem{thm}{Theorem}[section]
\newtheorem{Thm}[thm]{Theorem}
\newtheorem{Def}[thm]{Definition}
\newtheorem{Prop}[thm]{Proposition}
\newtheorem{Lem}[thm]{Lemma}
\newtheorem{Cor}[thm]{Corollary}
\newtheorem{Rem}[thm]{Remark}
\def\XXint#1#2#3{{\setbox0=\hbox{$#1{#2#3}{\int}$ }
\vcenter{\hbox{$#2#3$ }}\kern-.6\wd0}}
\def\HuntA{\mathbf A}
\begin{document}

\title{Stochastic Ricci Flow on Compact Surfaces}
\author{Julien Dub\'edat\footnote{Julien Dub\'edat, Columbia University, Email: dubedat@math.columbia.edu}
  $\;$ and Hao Shen\footnote{Hao Shen, University of Wisconsin-Madison, Email: pkushenhao@gmail.com}
}

\maketitle

\abstract{In this paper we introduce  
the stochastic Ricci flow (SRF)  in two spatial dimensions. The flow is symmetric with respect to a measure induced by Liouville Conformal Field Theory.
Using the theory of Dirichlet forms,
we construct a weak solution  to the associated equation of 
 the area measure on a flat torus, in the full  ``$L^1$ regime'' $\sigma< \sigma_{L^1}=2 \sqrt\pi$ where $\sigma$ is the noise strength.
We also describe the  main necessary modifications needed for the SRF on general compact surfaces,
and list some open questions. 
 }

\tableofcontents

\section{Introduction}
\label{sec:Intro}

The Ricci flow, introduced by Hamilton \cite{MR954419}, is an intrinsic evolution of a Riemannian metric $g=g(t)$ on a fixed smooth manifold:
\[
\partial_t g=-2R_g
\]
 where $R_g$ is the Ricci curvature of $g$.
Assuming that the manifold is a closed (compact oriented) Riemann surface, we will be interested in a normalized version  of the flow:
 \begin{equ}[e:lambda-Ricci]
 \partial_t g=-2R_g-2\lambda g \;.
 \end{equ}
The real number $\lambda$ plays the role of a normalization constant: when  $\lambda$ is  chosen to be the minus average Gauss curvature, the flow preserves total area, and converges to the constant curvature metric as proved by \cite{MR954419,MR1094458,OPS}. 

Up to normalization, the Ricci flow \eqref{e:lambda-Ricci} is the only intrinsic parabolic (deterministic) evolution of the metric. 
Indeed, the coefficients $-2$ do not play any essential role; 
a simple transform $\tilde{g}(t)=g(at)e^{bt}$ yields the same equation with generic coefficients.
Moreover, classical results in Riemannian geometry (e.g. \cite{Epstein}) show that, under mild assumptions, the only ``natural" tensors (i.e. that do not depend on coordinate or other choices) are those generated by the metric and curvature tensor by taking linear combinations, covariant derivatives, tensor products and contractions; in two dimensions the only intrinsic terms are the metric and Ricci curvature. In the present article, we are concerned with constructing intrinsic {\em stochastic} evolutions of the metric.

Recall the following facts. Let $g_0$ be a metric on a closed Riemann surface.
We consider 
 metrics obtained by Weyl scaling $g=e^{2\phi}g_0$,
where the function $\phi$ is  the conformal factor. 
The Ricci curvature tensor $R_g$, the Gauss curvature $K_g$, the Laplacian\footnote{In this article the Laplacian is just the trace of the Hessian on functions, {\it without} a negative sign.}
 operator $\Lap_g$ and the 
area form $\area_g$ for $g$ then satisfy
 \begin{equ} [e:useful-facts]
 R_g=  R_0 - \Lap_0\phi \, g_0 \;,
 \qquad
 K_g=e^{-2\phi}(K_0-\Lap_0\phi) \;,
 \qquad
 \Lap_g=e^{-2\phi}\Lap_0\;,
 \qquad
 \area_g = e^{2\phi} \area_0 \;,
 \end{equ}
 where $R_0$, $\Lap_0$ and $\area_0$ are respectively the Ricci curvature, the Laplacian  and area form for $g_0$. 
 Since the dimension is two, the scalar curvature (the trace of the Ricci tensor w.r.t. the metric) equals twice the Gauss curvature, and one always has $R_g=K_g \,g$.
An important  property  of the 2d Ricci flow is that 
the metric  evolves within a conformal class, namely if the initial condition has the above form, then $g(t)=e^{2\phi(t)}g_0$ for all $t>0$,
so that the equation \eqref{e:lambda-Ricci} can be written in terms of the conformal factor in the following equivalent forms:
\begin{equs}[e:lambda-Ricci-phi]
\partial_t\phi = -K_g-\lambda
&=\Delta_g \phi - e^{-2\phi}K_0-\lambda \\
&=e^{-2\phi}\Delta_0\phi - e^{-2\phi}K_0   -\lambda 
\;.
\end{equs}

To motivate our stochastic version of the two-dimensional Ricci flow, as well as its 
 connection to the Liouville Conformal Field Theory (LCFT),
we recall that String Theory is concerned with surfaces with varying metrics $g$, and a central quantity is the (``$\zeta$-regularized'') determinant of Laplacian formally denoted by 
$\det' \Delta_g $. This a spectral invariant, i.e. can be computed from the spectrum of $\Delta_g$ (which is discrete and satisfies Weyl asymptotics). 
Osgood-Phillips-Sarnak \cite{OPS} provided an important
perspective on the uniformization theorem:
in \cite[Theorem~1]{OPS} they proved that
among all metrics in a given conformal class and of given area, the constant curvature metric  (which exists and is unique up to isometry) has maximum determinant $\det' \Delta_g $.
They also showed that \cite[Theorem~2.A]{OPS} when the Euler characteristic is non-positive,
the Ricci flow can be realized as a {\it gradient flow}, as we now explain.

On the Riemann surface $\Sigma$,
the celebrated Polyakov \cite{polyakov1981quantum}
anomaly formula  (see e.g. Section 1 of \cite{OPS}) for the variation of the quantity $\log \det' \Delta_g$ under conformal change $g=e^{2\phi}g_0$ reads:
\begin{equ}[e:Polyakov]
\log{\det}'\Delta_g-\log{\det}'\Delta_{0}
=-\frac 1{12\pi} \int_{\Sigma} |\nabla_{g_0}\phi|^2 \area_0 -\frac 1{6\pi}\int_{\Sigma} K_0\phi \,\area_0
	+\log\frac{V_g}{V_0}
\end{equ}
where  $V_0,V_g$ are areas of $g_0$ and $g$, i.e.
$V_g=\int_{\Sigma} \area_g  =\int_{\Sigma} e^{2\phi} \area_0$;
and $K_0$ is the Gauss curvature of $g_0$.
Define a ``potential'' for $\lambda\in\R$
\[
V(g)=-\log{\det}'\Delta_g+\log V_g+\frac{\lambda}{12\pi} V_g \;.
\]
By \eqref{e:Polyakov} we see that $6\pi(V(g)-V(g_0))$
 is essentially the (classical) {\it Liouville action}  $S(\phi)$ up to a constant 
\begin{equs}[e:6piV-S]
6\pi(V(g) &-V(g_0)) +\frac{\lambda}{2}V_0  \\
&= \;
S(g_0,\phi) 
\;
:= \int_{\Sigma} \Big( \frac12 |\nabla_{g_0} \phi|^2+ K_0\phi + \frac{\lambda}{2}  e^{2 \phi}
	\Big)\,\area_0
\end{equs}
which gives a concrete formula for the action. 
By a simple change  of variables -- see \eqref{e:match-S} in Section~\ref{sec:LiouvilleCFT} below -- 
this is exactly the same as the probabilists' convention of definition of the classical Liouville action,
up to an overall constant $\frac{\pi\gamma^2}{2}$.

The Ricci flow \eqref{e:lambda-Ricci-phi}
turns out to be a gradient flow of $S(g_0,\phi)$
(and thus $6\pi V(\phi)$) given in \eqref{e:6piV-S} with respect to the a formal Riemannian structure on the infinite-dimensional  space of metrics on $\Sigma$ in a fixed conformal class, where the ``tangent space" at $g$ is equipped with the {\it intrinsic}
inner product $L^2(\area_g)$ (rather than the ``flat'' inner product $L^2(\area_0)$!).
Here,  the metric $L^2(\area_g)$ is defined such that for a ``tangent'' vector $\delta\phi$ at a Riemannian metric $g$, 
\begin{equ}[e:tangent]
\|\delta\phi\|^2_{L^2(\area_g)} = \int_{\Sigma} (\delta\phi)^2 \area_g \;.
\end{equ}
This metric is known as the {\em Calabi metric}, see \cite{CalCal} (in the context of K\"ahler metrics, viz. for fixed volume, as well as complex structure).
Consider a perturbation $g+\delta g=e^{2\delta\phi}g$; then
by \eqref{e:6piV-S} and \eqref{e:useful-facts}
\[
\delta S(g_0,\phi)
= \int_{\Sigma} (K_0-\Delta_0 \phi)\delta\phi \, \area_0
	+\lambda\int_{\Sigma} \delta\phi \,\area_g 
=\left\langle   K_g+ \lambda,\delta\phi\right\rangle_{L^2(\area_g)}
\]
so that the gradient flow associated to $S(g_0,\phi)=6\pi V$ is
indeed the Ricci flow \eqref{e:lambda-Ricci-phi}.

In general, associated to, say, a compact Riemannian manifold $({\mc M},{\bf g})$ and a potential ${\bf V}:{\mc M}\rightarrow\R$, one can consider a (deterministic) gradient flow on ${\mc M}$:
$$dX_t=-\grad_{\bf g} V(X_t)$$
The {\em Langevin flow} is a natural stochastic perturbation of the gradient flow associated to the same data $({\mc M},{\bf g},{\bf V})$; its generator reads
\begin{equation}\label{e:langevin}
\sigma^2\Lap_{\bf g}-\grad_{\bf g} {\bf V}(X_t)\cdot\grad_{\bf g}
\end{equation}
and its invariant measure is $\propto e^{-\sigma^{-2}{\bf V}}d{\rm Vol}_{\bf g}$.

Since the Ricci flow is realized as a gradient flow - where ${\mc M}$ is the (infinite-dimensional) space of metrics in a conformal class, ${\bf g}$ is the Calabi metric \eqref{e:tangent} and the potential is given by \eqref{e:6piV-S}, a natural question is to construct the Langevin flow associated to that data. Remark that the (formal !) volume form associated to the Calabi metric is central to the treatment of 2D quantum gravity by David--Distler-Kawai (\cite{David_2DG,DisKaw}, see also Section 2.1 in \cite{Nak_Liouv}).

In view of this, the natural and intrinsic noise
that we would like to add to equation \eqref{e:lambda-Ricci-phi}
should then be a noise which, in the spatial direction, is ``white'' with respect to $L^2(\area_g)$.
In fact if $\zeta_0$ is a {\it spatial} white noise w.r.t. $L^2(\area_0)$, and $\phi$ is a smooth conformal factor, 
then 
  $\zeta_g:=e^{-\phi} \zeta_0$  is white with respect to the metric $L^2(\area_g)$,
namely, by \eqref{e:useful-facts},
\begin{equ}[e:intrin-noise]
\E\big[(\int_{\TT} f\zeta_g \area_g)^2\big]
=\E\big[(\int_{\TT} f \zeta_0 e^{\phi}\area_0)^2\big]
=\int_{\TT} f^2e^{2\phi}\area_0
=\int_{\TT} f^2\area_g \;.
\end{equ}

In this paper  we focus first on a flat two-dimensional torus $\Sigma=\TT=\C/(\Z+\tau\Z)$, $\Im(\tau)>0$. 
We will briefly discuss the case of compact surfaces in Section~\ref{sec:surfaces}, and we will see there 
that while on the torus the stochastic Ricci flow is the classical Ricci flow (in terms of $\phi$) plus a noise that 
is white with respect to the metric $g$,
there are complications when the reference is not flat.

Let $g_0$ be a flat metric on $\TT$ such that $K_0=0$.
 In the sequel we write $\Lap=\Lap_0$.
The  {\it stochastic Ricci flow} (SRF)  we study in this paper is then  {\it formally} given by
\[
\partial_t g=-2R_g-2\lambda g+2\sigma\xi_g g 
\qquad
\mbox{where }\xi_g :=e^{-\phi} \xi_0\;,
\]
or (again formally) in terms of the conformal factor
\begin{equs}[e:SRicci]
\partial_t\phi  &=\Delta_g\phi -\lambda + \sigma \xi_g
\\
&=e^{-2\phi}\Delta\phi -\lambda +  \sigma e^{-\phi} \xi_0 
\;,
\end{equs}
where $\sigma\in\R$ and
 $\xi_0$ is the {\it space-time} white noise w.r.t. the Euclidean metric $g_0$.
 Equation  \eqref{e:SRicci} is a nonlinear version of the stochastic heat equation (SHE)
\begin{equ}[e:SHE]
\partial_t\phi  =\Delta\phi + \sigma \xi_0
\end{equ}
whose invariant measure  is the Gaussian Free Field (GFF) with covariance operator $\frac {\sigma^2}2(-\Lap)^{-1}$. In particular $\phi$ has negative regularity.

\begin{Rem}
Let us point out that, beyond the usual ``flat" SHE, one can consider a SHE with respect to a fixed smooth metric $\hat g=e^{2\hat\phi}g_0$; it reads
\begin{equation}\label{eq:smoothSHE}
\partial_t\phi  =\Delta_{\hat g}\phi + \sigma \xi_{\hat g}=e^{-2\hat\phi}\Delta_0\phi+\sigma e^{-\hat\phi}\xi_0\;.
\end{equation}
For any fixed, smooth $\hat\phi$, the invariant measure is the same GFF. In view of this, it is natural to expect that a solution of \eqref{e:SRicci} has the (negative) regularity of a GFF, and thus all nonlinearities have to be interpreted in a suitable regularized or renormalized way.
\end{Rem}

Note that in \eqref{e:SRicci}, setting $\sigma=0$ recovers the Ricci flow \eqref{e:lambda-Ricci-phi}; setting $\lambda=0$, $\tilde\phi=\sigma^{-1}\phi$, starting from $\phi_0\equiv 0$, and formally taking $\sigma\searrow 0$ (i.e. perturbation around the constant flat solution) gives the stochastic heat equation.

By formally taking time derivative on $\area_g = e^{2\phi} \area_0 $, the evolution for the area form  $\area_g(t)$ is (again formally) given by
\begin{equ}[e:Ricci-A]
\partial_t \area_g(t) = 2\Lap \phi (t) \, \area_0 - 2\lambda\, \area_g(t) + 2 \sigma \xi_g \,\area_g(t) \;.
\end{equ}
Note that the r.h.s. has $\phi$ explicitly showing up. 
Heuristically this is understood as the conformal factor $\phi$ and the area form  $\area_g = e^{2\phi} \area_0 $ mutually determine each other. The precise meaning 
of this mutual determination is discussed below, see the ``inversion'' property of GMC (as proved by \cite{BerestyckiSheffieldSun}) in Section~\ref{Subsec:GMC}.

In this article we will mainly focus on the dynamic \eqref{e:Ricci-A}
and prove Theorem~\ref{thm:main} which provides a meaning of solution to \eqref{e:Ricci-A}.
We provide some background on Gaussian multiplicative chaos and Liouville conformal field theory below before formulating this main theorem. However, let us immediately point out here a key new feature of the stochastic flow - 
for the deterministic flow \eqref{e:lambda-Ricci}, as mentioned above, the total area of the surface can be preserved by choosing the normalization $\lambda$ to be the minus average Gauss curvature which, on the torus and by Gauss-Bonnet, is zero.  This is not the case for  the stochastic flow; in fact the noise $\xi_g$ is a local random perturbation which has no guarantee to preserve the total area as a global quantity.
As we will see below  Theorem~\ref{thm:main}, on the torus the total area $A$
satisfies an SDE 
$dA=2\sigma\sqrt{A}d\beta_t -2\lambda A dt$ where $\beta$ is a 1-dimensional Brownian motion,
and the total area will almost surely vanish in finite time. 
Note that this a.s. finite time vanishing is not in contradiction with 
the fact that the SRF is a Langevin flow for  the Liouville conformal field theory measure
because,  as we will discuss in Section~\ref{sec:LiouvilleCFT}, the Liouville conformal field theory measure being considered here is only a $\sigma$-finite measure, not a probability measure i.e. not normalizable.
We will also describe the evolution and long-time asymptotic 
of the total surface area in Section~\ref{sec:extensions} for general compact surfaces and with insertions of vertex operators, and their relation with Seiberg bound for finiteness of Liouville correlation function.

\subsection{Gaussian multiplicative chaos (GMC)}\label{Subsec:GMC}

The previous discussion suggests that we consider a process of measures $\omega_g(t)=e^{2\phi(t)}\omega_0$, where $\phi$ looks like a GFF. Such {\em Gaussian multiplicative chaos (GMC)} or {\em Liouville measures} have been studied extensively, going back to H\o egh-Krohn \cite{HoeghKrohn}, Kahane \cite{MR829798}; we refer to the survey \cite{MR3274356} and references therein. We list below some basic properties we will need later. We focus on the GFF context relevant to us, although most properties below hold in greater generality (log-correlated fields). See e.g. \cite{Dub_SLEGFF} and references therein for general background on the GFF.

\paragraph{Existence and construction.}

Let $X$ denote a Gaussian Free Field with Dirichlet conditions in a domain $D\subset \C$; it is a centered Gaussian field with covariance given by the Dirichlet Green function:
$$\E(X(z)X(w))=2\pi (-\Delta_D)^{-1}(z,w)=-\log|z-w|+R(z,w)$$
where $R$ is smooth near the diagonal. It is a random distribution (in the sense of Schwartz) and can be realized as a random element of a negative index Sobolev space $H^{-s}_{loc}(D)$, $s>0$, in the abstract Wiener space formalism. 

For $\eps>0$ (and away from the boundary), one can consider the circle average process $X_\eps(z)=\int X(z+\eps e^{i\theta})\frac {d\theta}{2\pi}$. This can be realized as a continuous process, as is easily seen by Kolmogorov's continuity criterion; each $X_\eps$ is measurable w.r.t. $X$. Then one can consider a positive measure on $D$
\begin{equation}\label{eq:epsreg}
M_\eps(X)=\eps^{\frac{\gamma^2}2}\exp(\gamma X_\eps(x))\omega_0(dx)
\end{equation}
where $\gamma$ is a positive parameter and $\omega_0$ is the Lebesgue measure.

Then \cite{MR2819163}, if $0<\gamma<2$, the sequence $(M_\eps)$ converges almost surely in the topology of weak convergence to a positive measure $M_X$, as $\eps$ goes to zero alongs a suitable fixed sequence. The random measure $M_X$ can thus naturally be thought of as a regularized $\Wick{e^{\gamma X}\omega_0}$. It is nonatomic and gives a.s. positive mass to any nonempty open set, and a.s. finite mass to any compact $K\subset D$; it is thus a random element of the space ${\mc M}(D)$ of Radon measures on $D$.

It can be shown that $M_X$ is a.s. supported on $\{z\in D:\lim_{\eps\searrow 0}\frac{X_\eps(z)}{-\log|\eps|}=\gamma\}$ and consequently is a.s. absolutely singular w.r.t. Lebesgue measure. 

If $\gamma\in (0,\sqrt 2)$ and $f$ is a test function, $\int fdM_X$ is a square-integrable; this is the so-called $L^2$ regime and leads to simpler arguments; however the results below hold in the full range $\gamma\in (0,2)$.

Other natural approximation schemes (convolution of $X$ with the heat kernel, or a smooth compactly supported kernel) are possible, and consistent \cite{MR3475456}.

\paragraph{Basic properties.}

A few important properties follow immediately from the construction.
\begin{enumerate}
\item 
{\em Locality.} If $U\subset\subset V\subset\subset D$, then the restriction of $M_X$ to $U$ is measurable w.r.t. the restriction of $X$ to $V$ (this refines the measurability of $X\mapsto M_X$).
\item
{\em Equivariance.} With the previous notation, the mapping $X_{|V}\mapsto (M_X)_{|U}$ does not depend on $D$ and is equivariant w.r.t. Euclidean isometries.
\item {\em Shift.} If $f\in C^1_c(V)$ is fixed, $dM_{f+X}=e^{\gamma f}dM_X$ a.s. (note under these assumptions, the law of $f+X$ and that of $X$ are mutually absolutely continuous). 
\end{enumerate}
Note that shift covariance is central to the approach of \cite{MR3475456}; the condition $f\in C^1_c$ can be relaxed to $f\in  H^1$ which is the Cameron-Martin space. An important additional property is scale (more generally, conformal) covariance. 

\paragraph{Inversion.}
We previously listed properties of the mapping $X\mapsto M_X$, which is defined a.e. on an abstract Wiener space. It will be convenient for our purposes to consider the a.e. defined inverse map $M_X\mapsto X$, constructed by Berestycki-Sheffield-Sun in \cite{BerestyckiSheffieldSun}. More precisely, if $(X,M_X)\in H^{-s}_{loc}(D)\times{\mc M}(D)$ are coupled as above, then $X$ is measurable w.r.t. $M_X$. This shows the existence of an a.e. inverse mapping $M_X\mapsto X$, which is a.e. defined (with respect to the induced measure on the second marginal ${\mc M}(D)$). 

From the explicit construction of \cite{BerestyckiSheffieldSun}, it is clear that this inverse mapping is also local, equivariant, and compatible with shift.  We will denote by $\mathbf M$
the mapping $X\mapsto M_X$, and  $\mathbf M^{-1} $  the mapping $M_X\mapsto X$.

\paragraph{Conventions.}

In our ``geometer's convention'' (coming from the Ricci flow) we would like to consider a Gaussian free field $\phi$ with covariance operator $\frac {\sigma^2}2(-\Lap)^{-1}$; in order to match with the above standard conventions for Liouville measures, let 
\begin{equ}[e:gamma-sigma]
\phi=\frac{\gamma}{2} X
 \qquad
 \mbox{and}
 \qquad
 \sigma = \sqrt\pi\gamma\;,
\end{equ}
 so that 
 \[
 e^{2\phi} = e^{\gamma X} \;.
 \]
 
With our convention, 
 $M_\eps = e^{2\phi_\eps -2 \E (\phi_{\eps}^2)}  \area_0 $ converges to the limit denoted by  $M=M_\phi=\Wick{e^{2\phi} \area_0}$.  The a.e. correspondence $\phi\leftrightarrow M_\phi$ is local and equivariant in the previous sense; the compatibility with shift simply reads
 \begin{equ}[e:shiftcov]
 dM_{f+\phi}=e^{2f}dM_\phi
 \end{equ}
 for a.e. $\phi$, where $f$ is a fixed $H^1$ function. 
 
 Remark that the locality of the correspondence shows that is also holds for a field $\phi$ on a surface whose restriction to small balls is absolutely continuous w.r.t. a GFF there; in particular for $\phi$ a GFF on a flat torus.

 The ``$L^2$ regime'' such that $M(f)$ for a smooth test function $f$ has finite second moment  as well the ``$L^1$ regime'' all the way to which $M$ obtained this way is nontrivial are respectively
 \[
 \sigma<\sigma_{L^2} = \sqrt{2\pi}
 \qquad
  \sigma<\sigma_{L^1} = 2 \sqrt{\pi}
 \]
  (which corresponds to the well-known $\gamma<\gamma_{L^2} =\sqrt{2}$ and $\gamma<\gamma_{L^1} =2$).

\subsection{Liouville conformal field theory}
\label{sec:LiouvilleCFT}

Closely related with GMC is the Liouville CFT measure on the space of fields $X$ over a Riemann surface with a fixed smooth reference metric $g_0$ and volume form $\area_0$, which is given by $Z^{-1}e^{-S(X)}DX$ where 
\[
S(X) = \frac{1}{4\pi} \int \Big( |\nabla X|^2 + 2Q K_0 X+4\pi \mu e^{\gamma X} \Big) \area_0
 \]
where  $Z$ is a normalization factor,  $K_0$ is the Gauss curvature of $g_0$.
The measure (with suitable insertions of vertex operators, see below)
has been rigorously constructed 
 by \cite{MR3465434} (on the sphere, see \cite{kupiainen2016constructive} for a review), and \cite{MR3450564} on the complex tori, and \cite{guillarmou2016polyakov} (genus $\ge 2$);
see also \cite{MR3825895} (on disk) and \cite{MR3843631} (on annulus).
The parameter $\mu > 0$ is the analogue of a ``cosmological constant'' in two dimensional gravity and $Q$ is a real parameter. For the particular value $Q=\frac{2}{\gamma}$ the action functional $S$ is classically conformally covariant. In the quantized theory $Q$ has the renormalized value $Q=\frac{2}{\gamma}+\frac{\gamma}{2}$ such that the random measure $\Wick{e^{\gamma X}}$ is invariant in law under change of reference measure (within a conformal class). Note that if we focus on a torus $\TT$ with flat metric $g_0$ then the necessary correction term $2Q K_0 X$ is hidden.

We remark that when the genus $g\le 1$, 
 the measure $e^{-S(X)}DX$ is not really normalizable 
(i.e. $Z$ is not well-defined) since the integral will diverge as the value of $X$ tends to $-\infty$, 
unless suitable vertex operators (see below) are inserted. 
However, in this paper where we work with torus  $g=1$, we will not consider insertions 
and thus will not normalize the measure. 
Rather, we will view it as a $\sigma$-finite measure, see \eqref{e:def-nu} and Lemma~\ref{Lem:sigmafin} below.

\paragraph{Conventions.}

The action $S(g_0,\phi)$ in \eqref{e:6piV-S} and the stochastic Ricci flow
\eqref{e:SRicci}   depends on two parameters $(\lambda,\sigma)$,
and the  standard conventions for the Liouville CFT action $S(X)$ in the probability literature 
 depends on two parameters $(\mu,\gamma)$. To match the two conventions
 \[
 (\phi,\lambda,\sigma)\qquad \leftrightarrow \qquad  (X,\mu,\gamma)
 \]
  besides the relations \eqref{e:gamma-sigma}, we further set $\lambda = \pi \mu\gamma^2$,
  and we summarize all these relations here:
\begin{equ}[e:lambda-sigma-mu]
\phi=\frac{\gamma}{2} X\;,
 \qquad
 \sigma = \sqrt\pi\gamma\;,
  \qquad
\lambda = \pi \mu\gamma^2\;.
\end{equ}
In this way we have
\begin{equs}[e:match-S]
 \frac{\pi\gamma^2}{2} \cdot \frac{1}{4\pi} \int_\Sigma \Big( |\nabla X|^2 + 2K_0 Q X+  4\pi \mu e^{\gamma X} \Big) \area_0
 &=
 \int_\Sigma \Big( \frac12 |\nabla \phi|^2 + K_0 \phi + \frac{\lambda}{2} e^{2\phi} \Big) \area_0 
 \\
& \mbox{where }\quad Q  =\frac{2}{\gamma} \;.
 \end{equs}

\paragraph{Insertions and Seiberg bounds.}

As mentioned above,
of great interest in Liouville CFT is the insertions of vertex operators 
\[
Z^{-1} \int \prod_{i=1}^n \Wick{e^{\alpha_i X(x_i)}}  \,e^{-S(X)} DX 
\]
for $n$ fixed points $x_i$ on the Riemann surface and $n$ real parameters $\alpha_i$
satisfying  the so called Seiberg bounds: $\sum_{i=1}^n \alpha_i > 2Q$ and $\alpha_i<Q$ for each $i$.
The corresponding stochastic dynamic -- by a similar calculation as in the proof of Theorem~\ref{thm:LIBP} and similar argument as in Remark~\ref{rem:invariance}
--
would be a formal equation of the following form
\[
\partial_t g=-2R_g-2\lambda g+2\sigma\xi_g g +\sum_{i=1}^n \alpha_i \delta_{x_i} \;,
\]
where each $\delta_{x_i}$ is a Dirac mass at $x_i$; or formally in terms of the area form
\begin{equ}
\partial_t \area_g(t) = 2\Lap \phi (t) \, \area_0 - 2\lambda\, \area_g(t) + 2 \sigma \xi_g \,\area_g(t)+\sum_{i=1}^n \alpha_i \delta_{x_i}  \;.
\end{equ}
When $\sigma=0$ (deterministic case) such equations appear in the context of metrics with conical singularities (see for instance \cite{phong2014ricci}); see Section \ref{Sec:insert} for further discussions.

\paragraph{Langevin dynamic /  stochastic quantization.}

The dynamic \eqref{e:SRicci} is expected to be symmetric with respect to the measure induced by Liouville CFT \eqref{e:match-S}, see Section~\ref{sec:diffusion}, as consistent with our motivation that  the dynamic  \eqref{e:SRicci} is viewed as the Langevin dynamic \eqref{e:langevin} of the Liouville CFT.
As in our proof,
the symmetry of the dynamic \eqref{e:SRicci} with respect to  Liouville CFT 
comes from the Dirichlet form theory, and in particular, by \cite[Eq.~(4.7.5)--(4.7.6)]{Fukushima2011} if one further has that the Dirichlet form is conservative \footnote{namely the associated Markovian semigroup $T_t$ has $T_t 1 =1$ a.e.} then
the dynamic \eqref{e:SRicci} is stationary and has  the Liouville CFT as an invariant measure.
 We will discuss in Section~\ref{sec:solution-Dirichlet} and Section~\ref{sec:extensions}
if the process is conservative or absorbed to zero  under different conditions. 

Constructing Langevin flows in the sense of \eqref{e:langevin}
for quantum field theory measures is also generally referred to as Parisi-Wu stochastic quantization \cite{ParisiWu},
and has drawn much attention in the recent years:
see e.g. \cite{Regularity,CC,Kupiainen} for the $\Phi^4_3$ model, 
\cite{MR3452276,SG8pi} for sine-Gordon model,
 \cite{hairer2016motion,bruned2019geometric}
for the loop measure on manifolds,
\cite{YM2020} for 2D Yang-Mills model,
and many other references therein.
In particular, we remark that  \cite{Garban2018} recently also studied a (different) Langevin flow for Liouville conformal theory: 
note that the gradient flow of $S(g_0,\phi)$
 with respect to the \underline{``flat"} metric $L^2(\area_0)$ would be
\begin{equ}
\partial_t\phi =\Delta \phi  -K_0 -\lambda e^{2\phi}
\;,
\end{equ}
and a stochastic version of this equation
(namely, this equation plus the space-time white noise with respect to the flat metric)
was studied in \cite{Garban2018}, and is called ``dynamical Liouville equation''; this is pursued later by 
\cite{hoshino2019stochastic,oh2019parabolic}.
We also remark that the significance of choosing a nontrivial metric (the ${\bf g}$ in  \eqref{e:langevin}) which is used to determine the gradient and the noise is  also observed in  other contexts. For instance, in aforementioned  \cite{hairer2016motion,bruned2019geometric},
one considers the space of all loops in a Riemannian manifold $M$, and at each given loop $u: S^1\to M$, an inner product on the ``tangent space'' at $u$ is defined in terms of the Riemannian metric of $M$ at all the points $u(x)$, see e.g. \cite[Sec.~1.1]{hairer2016motion}, and that yields a particular case of the Eells-Sampson Laplacian \cite{eells1964harmonic} appearing in the study of harmonic mappings in geometry (except loops are one-dimensional while in geometry harmonic mappings are of more interest in higher dimensions), and a noise compatible with this metric. Similarly in Yang-Mills model \cite{YM2020}, the gradient and noise need to respect the inner product one chooses on the Lie algebra.

 \subsection{Main result}
 
Our main result is a construction of weak solution to the equation \eqref{e:Ricci-A} 
 for the area measure $\area$. First, we need to formulate a notion of weak solution.
 By the calculation  \eqref{e:intrin-noise}, 
 we expect that given a suitable test function $f$, one should have the following one-dimensional projected stochastic equation
 \begin{equ}[e:1d-proj]
d\int_{\TT}  f \, \area_g
=2\left(\int_{\TT}   f\Delta\phi \area_0
-\lambda\int_{\TT}  f\, \area_g\right)dt
+2\sigma\left(\int_{\TT}  f^2\area_g\right)^{\frac12}d\beta_t \;.
\end{equ}
 Here $(\beta_t)$ is a one-dimensional standard Brownian motion. 

To formulate our result, 
let ${\mc M}_1(\TT)$ be the space of Borel probability measures on $\TT$ and ${\mc M}(\TT)$ be the space of finite positive Borel measures, equipped with the metrizable topology of weak (vague) convergence. Let 
\[
\MeaSp:={\mc M}(\TT)\setminus\{0\}\;.
\]
For $A\in \MeaSp$ and a function $f$ on $\TT$ we write by $A(f)$ 
the integral of $f$ with respect to the measure $A$.
We write $A(\TT) = A(1)$. 
Note that $\MeaSp$ is locally compact and $\MeaSp $ is homeomorphic to ${\mc M}_1(\TT)\times (0,\infty)$ via $A\mapsto (A/V,V)$, where $V=A(\TT)$ is the total measure of the torus $\TT$ under the measure $A\in \MeaSp$.  For an area form $\area$ we 
view it as a measure and
write $\area(f):=\int f \area$.

With $\MeaSp$ as our state space, we write $\MeaSp_\partial = \MeaSp \cup \{\partial\}$ (where the extra point $\partial$ is the ``cemetery'' state);
and we call a quadruple 
$\{\Omega,\mc F, (A_t)_{t\ge 0}, (P_z)_{z\in\MeaSp} \}$
 a Markov process on  $\MeaSp$ with time parameter $[0, \infty)$ if 
$\{\Omega,\mc F, (A_t)_{t\ge 0}, (P_z)_{z\in\MeaSp} \}$ is a Markov process with state space $\MeaSp_\partial$  for each $x\in \MeaSp_\partial$, $P_x (A_t \in E)$ is measurable in $x\in \MeaSp$ for every $t\ge 0$ and every Borel set $E\subset \MeaSp$, $P_\partial (X_t =\partial)=1$ for all $t\ge 0$ and finally $P_x (X_0=x)=1$ for all $x\in \MeaSp$.
 
 \begin{Thm} \label{thm:main}
 For $  \sigma<\sigma_{L^1} = 2 \sqrt{\pi}$,
 there exists a Markov diffusion process
  $\mathbf A=\{\Omega,\mc F, (A_t)_{t\ge 0}, (P_z)_{z\in\MeaSp} \}$ on the space $\MeaSp$, such that for any smooth function $f$  and quasi-every $z\in \MeaSp$, $A_t(f)$ satisfies the following SDE
  \begin{equ} [e:1d-proj-theo]
d   A_t(f)
=2\Big(  \area_0( f\Delta\phi_t )
-\lambda    A_t ( f )\Big)dt
+2\sigma\left( A_t(f^2)\right)^{\frac12}d\beta_t^f \;,
\qquad 
A_0(f)=z(f) \;,
\end{equ}
where $\forall t>0$, $\phi_t=\mathbf M^{-1} A_t$ a.s. and
$\beta^f $ is a  one-dimensional standard Brownian motion.
 \end{Thm}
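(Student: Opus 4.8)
The plan is to construct $\mathbf A$ as the $\nu$-symmetric Markov diffusion associated, through the theory of Dirichlet forms \cite{Fukushima2011}, to the Langevin dynamics \eqref{e:langevin} of the Liouville CFT measure, transported from the space of fields to $\MeaSp$ by the GMC correspondence $\mathbf M$ and its a.e.\ inverse $\mathbf M^{-1}$ of \cite{BerestyckiSheffieldSun}.

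First I would fix the symmetrizing measure. On the torus the Liouville action \eqref{e:match-S} has $K_0=0$, so the field measure factorizes as the GFF law on the non-constant modes tensored with Lebesgue measure on the zero mode $\bar\phi$, reweighted by the cosmological term, which after renormalization is exactly $e^{-\mu A(\TT)}$ with $\mu=\lambda/\sigma^2$. Pushing this forward by $\mathbf M$ gives a measure $\nu$ on $\MeaSp$; it is only $\sigma$-finite because along $\bar\phi\to-\infty$ (i.e.\ as the total mass $A(\TT)\to 0^+$) the weight tends to $1$ while the Lebesgue direction contributes infinite mass (Lemma~\ref{Lem:sigmafin}). On the core of cylinder functions $F=\Phi(A(f_1),\dots,A(f_n))$, $G=\Psi(A(f_1),\dots,A(f_n))$ with $\Phi,\Psi$ smooth and $f_i$ smooth, the shift covariance \eqref{e:shiftcov} gives the directional derivative of $F$ along $\phi\mapsto\phi+sh$ as $2\sum_i\partial_i\Phi\,A(f_ih)$, so the Calabi gradient is $\nabla_A F=2\sum_i\partial_i\Phi\,f_i$ and
\[
\langle\nabla_A F,\nabla_A G\rangle_{L^2(A)}=4\sum_{i,j}\partial_i\Phi\,\partial_j\Psi\,A(f_if_j).
\]
I then set $\mathcal E(F,G)=c_\sigma\int_\MeaSp\langle\nabla_A F,\nabla_A G\rangle_{L^2(A)}\,d\nu$, with the constant $c_\sigma\propto\sigma^2$ fixed so that the energy measure of the coordinate $A(f)$ equals $4\sigma^2A(f^2)\,\nu$, matching the bracket prescribed by \eqref{e:1d-proj}.

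Next I would show $(\mathcal E,\text{core})$ is closable and that its closure is a quasi-regular, strongly local Dirichlet form on $L^2(\MeaSp,\nu)$. Closability rests on an integration-by-parts formula for $\nu$: differentiating the field measure under Cameron--Martin shifts $\phi\mapsto\phi+sh$ and combining the Girsanov density of the GFF with \eqref{e:shiftcov} produces the logarithmic derivative of $\nu$, whose pairing with the coordinate gradient is precisely the drift $2(\area_0(f\Delta\phi)-\lambda A(f))$; this is the content of Theorem~\ref{thm:LIBP}. Quasi-regularity follows from exhibiting an $\mathcal E$-nest of compacts and a countable point-separating subfamily of the core, while strong locality holds because the carré du champ is a derivation, which also makes the associated process a diffusion. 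Invoking \cite{Fukushima2011} then yields the $\nu$-symmetric diffusion $\mathbf A$ on $\MeaSp$, the cemetery $\partial$ accounting for total-mass extinction; taking $f\equiv 1$ (so that $\area_0(\Delta\phi)=0$) the equation \eqref{e:1d-proj-theo} collapses to the squared-Bessel-type SDE $dA_t(\TT)=-2\lambda A_t(\TT)\,dt+2\sigma\sqrt{A_t(\TT)}\,d\beta_t$ announced in the introduction, whose solution reaches $0$ in finite time.

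Finally I would read off \eqref{e:1d-proj-theo}. For fixed smooth $f$ the Fukushima decomposition of the coordinate $u=A(f)$ reads $A_t(f)-A_0(f)=M^{[f]}_t+N^{[f]}_t$ with $M^{[f]}$ a martingale additive functional and $N^{[f]}$ of zero energy. The energy-measure computation above identifies the sharp bracket $\langle M^{[f]}\rangle_t=\int_0^t4\sigma^2A_s(f^2)\,ds$, whence, by the Dambis--Dubins--Schwarz representation, $M^{[f]}_t=\int_0^t2\sigma\,(A_s(f^2))^{1/2}\,d\beta^f_s$ for a standard Brownian motion $\beta^f$; and the integration-by-parts formula of Theorem~\ref{thm:LIBP} identifies $N^{[f]}_t=\int_0^t2(\area_0(f\Delta\phi_s)-\lambda A_s(f))\,ds$ with $\phi_s=\mathbf M^{-1}A_s$, proving the claim for quasi-every starting measure $z$. \textbf{The main obstacle} is to make closability and these integration-by-parts estimates work in the \emph{full} $L^1$ regime $\sigma<2\sqrt\pi$: there the chaos integrals $A(f)$ have no finite second moment, so the cylinder functions do not obviously sit in $L^2(\nu)$ and the pairing $\area_0(f\Delta\phi)$ with the GMC must be controlled through refined (negative-moment or localized) GMC estimates and the explicit structure of the Berestycki--Sheffield--Sun inverse, rather than by the soft second-moment arguments available in the $L^2$ regime $\sigma<\sqrt{2\pi}$.
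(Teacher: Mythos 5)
Your overall architecture matches the paper's: push the Liouville CFT measure forward to $\MeaSp$ by the GMC map, define the form as the carr\'e du champ for the $L^2(A)$-gradient on cylinder functionals, prove closability via an integration-by-parts formula (the paper's Theorem~\ref{thm:LIBP}), obtain a diffusion from Dirichlet-form theory (note that $\MeaSp$ is locally compact, so the paper invokes Fukushima's \emph{regular} theory rather than quasi-regularity, but this is a minor difference), and identify the drift and martingale parts through the Fukushima decomposition and the Revuz correspondence. All of this is the paper's proof in outline.

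The genuine gap is in your test-function class, precisely the point you defer as ``the main obstacle,'' and your diagnosis of that obstacle is off. As you define it, the core consists of $F=\Phi(A(f_1),\dots,A(f_n))$ with $\Phi$ merely smooth. But $\m=\mathbf M_*\nu$ is an \emph{infinite} ($\sigma$-finite) measure for every $\sigma$: as the zero mode tends to $-\infty$ the total mass $A(\TT)\to 0$ while the weight $e^{-\lambda\sigma^{-2}A(\TT)}\to 1$, so the Lebesgue direction contributes infinite mass. Hence your class is not contained in $L^2(\MeaSp,\m)$ (already $F\equiv 1$ fails), the symmetric form is not densely defined on a subspace of $L^2$, and closability cannot even be posed. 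This failure has nothing to do with the $L^1$ versus $L^2$ regime, and its resolution requires neither refined chaos estimates nor the explicit structure of the Berestycki--Sheffield--Sun inverse (which the paper uses only to define $\phi_t=\mathbf M^{-1}A_t$). The paper's fix is structural: in Definition~\ref{def:mcC} the function $q$ is required to be compactly supported with $\mathrm{Supp}(q)\subset(\eps,\eps^{-1})\times Q$ in the total-mass coordinate \eqref{e:zero-th-co}. Then every test functional is bounded and supported in $\{A(\TT)\in(\eps,\eps^{-1})\}$, a set of finite $\m$-mass by Lemma~\ref{Lem:sigmafin}; on that set $A(f_if_j)\le C\eps^{-1}$, and the drift pairing $\langle f,\Delta\phi\rangle$ is Gaussian under $\hat\mu$, so all integrability needed for the IBP, for $\mc L F\in L^2(\nu)$, and hence for closability follows from Cauchy--Schwarz against the localization indicator together with the Robert--Vargas positive/negative moment bounds on the total GMC mass --- estimates valid uniformly in the full subcritical range $\sigma<2\sqrt\pi$, with no distinction between regimes. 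The SDE for the unbounded coordinate $A(f)$ is then recovered at the end by truncation, as the paper notes in Lemma~\ref{lem:identify-M}. Without building this cutoff into the core, your steps two through four do not get off the ground.
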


In the theorem, quasi-everywhere refers to the Dirichlet form and in particular implies $\m$-almost everywhere, where $\m$ is the law of a GMC (a measure on $\MeaSp$). Moreover for $t>0$ and q.e. $z$ the law of $A_t$ under $P_z$ is absolutely continuous w.r.t. $\m$ (see Section \ref{sec:diffusion}). Remark that $\phi_t$ only appears through
$$A_t(f)-A_0(f)=2\int_0^t\int_\Lambda f\Delta\phi_t\omega_0dt+\cdots$$
in \eqref{e:1d-proj-theo} (written here in integral form), so that it is enough to have it defined for (Lebesgue) almost every $t$ (see the discussion at the end of the proof of Lemma \ref{lem:identify-N}).

We say that a Markov  process
  $\mathbf A=\{\Omega,\mc F, (A_t)_{t\ge 0}, (P_z)_{z\in\MeaSp} \}$ on the space $\MeaSp$ 
 is {\it a weak solution} to the equation \eqref{e:Ricci-A} 
 if  for any smooth function $f$  and quasi-every $z\in \MeaSp$, $A_t(f)$ satisfies the  SDE \eqref{e:1d-proj-theo}.
 Theorem~\ref{thm:main} thus states the existence of a weak solution to   \eqref{e:Ricci-A}.

An immediate consequence of the theorem is that the total area $A_t(1)$ satisfies a stochastic ordinary differential equation:
\[
dA_t(1)=2\sigma\sqrt{A_t(1)}d\beta_t -2\lambda A_t(1)dt
\]
and therefore (see 
Corollary~\ref{cor:absorb} below)
the process $A_t(1)$ is a.s. absorbed at 0 in finite time.
 
\begin{Rem}
A next goal would be  to construct a (coupled) process $(\phi_t,A_t)$ where the component $\phi$ takes values in the space of Schwartz distributions $H^{-\eps}(\TT)$ and the component  $A$ takes values in the space of Borel measures,
such that for each $t>0$, $\phi_t $ is absolutely continuous w.r.t. the Gaussian free field, and the process $(A_t)$ is such that $A_t= \Wick{e^{2\phi_t} \area_0 } $ with
the  one-dimensional projection \eqref{e:1d-proj}.
See Section~\ref{sec:open-probs} for further discussions.
\end{Rem}

We will construct a weak solution using the theory of infinite-dimensional Dirichlet forms.
This is a general machinery to construct weak solutions of stochastic equations
which have explicit invariant (or at least symmetrizing) measures. We will frequently refer  to the book \cite{Fukushima2011}
when implementing this formalism. 
Among many applications of Dirichlet forms in constructing  weak solutions of stochastic equations, we mention 
several very recent ones that are to some extent related with our problem. In the context of stochastic quantization,
for the theory of Dirichlet forms is recently applied to the $\Phi^4$ model in three space dimensions by \cite{MR3858906,MR3631399}, see \cite{MR3828197} for a survey of these results (earlier result on this model in two space dimensions was obtained by \cite{AlbRock91}). 
Another instance of stochastic version of a geometric flow
 by \cite{rockner2017stochastic,chen2018stochastic} who considered a manifold-valued stochastic heat equations (and the strong solution has been also constructed, by \cite{bruned2019geometric}).
 We also remark that the theory of Dirichlet forms
was also recently exploited in the study of Liouville Brownian motions \cite{MR3531686,MR3272329,MR3531710} which is closed related with the Liouville measure we are concerned in this paper.

\begin{Rem}
In our  notation, $\area_g  = e^{2\phi}\area_0$ refers to the area form if $\phi$ is smooth or the ``formal'' area form if $\phi$ is rough.  $\M$ refers to the renormalized area form, i.e. GMC. 
Finally we will often denote a   generic element in $\MeaSp$
by the notation $A$.
\end{Rem}

\subsection*{Acknowledgements}

{\small
We would like to thank Rongchan Zhu, R\'emi Rhodes, Vincent Vargas and Steve Zelditch for useful discussions. JD gratefully acknowledges the  support by the NSF grant DMS-1308476 and DMS-1512853.
HS gratefully acknowledges the  support by the NSF grants DMS-1712684 / DMS-1909525 and DMS-1954091.
}

\section{Integration by parts}
\label{sec:IBP}

A key step of implementing the machinery  of Dirichlet forms 
is a proof of an integration-by-parts formula.
At first glance the form of integration-by-parts formulas we will provide below (with respect to both a Gaussian free field measure $\mu$ and a Liouville CFT measure $\nu$)
is similar to \cite[Theorem~5.3]{AlbeverioHK74}; we quickly summarize the setup in the latter  case
in Lemma~\ref{lem:DG-Classical} below.
 But the main difference comparing with our case is that
the functional  therein
is assumed to be ``cylindrical'' (or so-called ``finite-dimensional base'' therein), that is, of the form
$G(\phi)=q(\int f_1 \phi \area_0,\dots,\int f_k \phi  \area_0)$.
The Gaussian integration-by-parts formula for such cylindrical functionals (see \cite[Lemma~5.2 and (5.23)]{AlbeverioHK74}) boils down to finite-dimensional Gaussian integration by parts essentially because $\int f_i \phi \area_0$ are Gaussian.
For our problem however, we need to consider a different class of functionals (see Definition~\ref{def:mcC} below), tailored to this specific situation.
The proof of integration by parts is based on 
shift covariance of Liouville measure together with the Cameron-Martin formula.

Denote by $H^\gamma = H^{\gamma,2} (\TT)$ the Sobolev Hilbert space.
In the sequel we write $\Phi:= H^{-\eps}$ where $\eps$ is a fixed, small positive real number. 
A general element $\phi \in \Phi$ can be uniquely decomposed as $\phi=m+\phi_0$, with $\phi_0$ zero-mean and $m\in \R$. 
We have the measure on $\Phi$
\[
d\hat\mu(\phi)=dm\otimes d\mu(\phi_0)
\]
where $dm$ is the Lebesgue measure on $\R$ and $\mu=\mu_\sigma$ is the Gaussian Free Field probability measure on zero-mean fields, for the covariance operator $\frac{\sigma^2}2(-\Delta)^{-1}$ (where $(-\Delta)^{-1}$ denotes the zero-mean Green kernel); $\sigma<2\sqrt\pi$ is fixed. The $\sigma$-finite measure $dm\otimes d\mu(\phi_0)$ is (up to multiplicative constant) the natural interpretation of the path integral measure
\begin{equation}\label{eq:pathint}
e^{-\sigma^{-2}\int_{\TT}|\nabla\phi|^2\omega_0}{\mc D}\phi \;.
\end{equation}

Denote by $H= H^1 (\TT)$ the Cameron-Martin space.
For any $A\in \MeaSp$, we denote by $L^2(A)$ the $L^2$ space with underlying measure $A$ on $\TT$. Recall that smooth functions are dense in $L^2(A)$, which is separable.

Recall that for any vector space $\Phi$, a function $G$ on (generally an open set of) $\Phi$, and elements
$\phi,h \in \Phi$,
we call $D_h G (\phi)$ the Fr\'echet derivative \footnote{Also called the Gateaux directional derivative.}
of $G$ in the direction $h$ at $\phi$
if the limit $D_h G (\phi) = \lim_{t\to 0} \frac{1}{t}(G(\phi+th) - G(\phi))$ exists.
Here, we have implicitly identified  the tangent space of the domain of $G$ at $\phi$ with  the vector space $\Phi$ itself (they share the same linear structure     so that we can simply add $\phi$ with a ``tangent vector'' $th$); however, as we discuss gradients of $G$, an inner product needs to be specified on 
the tangent space at each $\phi$. Given such an inner product $\langle \;,\; \rangle$ at each point $\phi$, the $\langle\;,\; \rangle$-gradient $DG(\phi)$ is then defined by the element in $\Phi $ such that $\langle DG(\phi) , h \rangle = D_hG(\phi)$, as long as such an element exists. As we will see below, the particular choice of this inner product will play an important role.

We start by recalling the classical results on gradients of test functionals and integration by parts for the Gaussian free field as in \cite{AlbeverioHK74} for the sake of comparison, without proof.

\begin{Lem} \label{lem:DG-Classical}
(The ``classical'' case.)
Let $G(\phi)= q(\int f_1 \phi \area_0,\dots,\int f_k \phi  \area_0)$ where $q: \R^k \to \R$ is a compactly supported $C^2$ function and the $f_i$'s are in $H^{-1}$. 
Then  $G$ has bounded Fr\'echet derivative in Cameron-Martin directions and
one has
\begin{equ}[e:DhG-Classical]
D_h G (\phi) 
= \sum_{i=1}^k \partial_i q\Big(\int f_1 \phi \area_0,\dots,\int f_k \phi  \area_0\Big)
\cdot \int ( f_i h )\area_0
\end{equ}
for any Cameron-Martin direction $h\in H$.
The $L^2(\area_0)$-gradient of $G$ is 
 characterized by 
$\langle D G (\phi),h\rangle_{L^2(\area_0)}=D_hG(\phi)$ for all $h\in H^1$ a.s., 
and is given by
\begin{equ}[e:DG-Classical]
D G (\phi)=  \sum_{i=1}^k \partial_i q\Big(\int f_1 \phi \area_0,\dots,\int f_k \phi  \area_0\Big)
 \,f_i .
\end{equ}
For such functionals $G$, we have the following Gaussian integration by parts
\begin{equ}[e:GIBP-Classical]
 \frac{\sigma^2}2 \int D_hG(\phi)\hat\mu(d\phi)
=\int G(\phi) \langle \nabla h, \nabla \phi \rangle \hat\mu(d\phi)
\end{equ}
where  $d\hat\mu(\phi)=dm\otimes d\mu(\phi_0)$ as defined above.
\end{Lem}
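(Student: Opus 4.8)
The plan is to handle the three assertions in order of difficulty, treating the derivative and gradient formulas as bookkeeping and reserving the real work for the integration-by-parts identity \eqref{e:GIBP-Classical}. For \eqref{e:DhG-Classical}, I would write $\ell_i(\phi)=\int f_i\phi\,\area_0$ and note that each $\ell_i$ is linear and that, for $h\in H=H^1$, $\ell_i(h)=\int f_i h\,\area_0$ is a genuine $H^{-1}$--$H^1$ pairing. Hence $D_h\ell_i(\phi)=\ell_i(h)$ is independent of $\phi$, and the chain rule for the $C^2$ map $q$ yields $D_hG(\phi)=\sum_i\partial_iq(\ell_1(\phi),\dots,\ell_k(\phi))\int f_ih\,\area_0$; boundedness in Cameron--Martin directions follows from $|D_hG(\phi)|\le(\sup|\nabla q|)\sum_i\|f_i\|_{H^{-1}}\|h\|_{H^1}$ since $q$ has compact support. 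Formula \eqref{e:DG-Classical} is then immediate: interpreting $f_i$ against $h$ through the duality pairing, $\langle f_i,h\rangle_{L^2(\area_0)}=\int f_ih\,\area_0$, so the element $\sum_i\partial_iq\,f_i$ satisfies $\langle\,\cdot\,,h\rangle_{L^2(\area_0)}=D_hG(\phi)$ for all $h\in H^1$, which characterizes the gradient.

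For the integration-by-parts formula \eqref{e:GIBP-Classical} I would use the Cameron--Martin formula for the Gaussian measure $\mu$. With covariance $C=\frac{\sigma^2}2(-\Delta)^{-1}$ on zero-mean fields, every $h\in H^1$ is a Cameron--Martin vector, and writing $h=\bar h+h_0$ with $h_0$ its zero-mean part, the law $\mu_{th_0}$ of $\phi_0+th_0$ is absolutely continuous with respect to $\mu$ with density
\[
\frac{d\mu_{th_0}}{d\mu}(\phi_0)=\exp\Big(\tfrac{2t}{\sigma^2}\langle\nabla h,\nabla\phi_0\rangle-\tfrac{t^2}{\sigma^2}\|\nabla h\|_{L^2}^2\Big),
\]
where the first-chaos variable $\frac{2}{\sigma^2}\langle\nabla h,\nabla\phi_0\rangle$ arises from $\langle C^{-1}h_0,\phi_0\rangle=\frac{2}{\sigma^2}\langle(-\Delta)h_0,\phi_0\rangle=\frac{2}{\sigma^2}\langle\nabla h,\nabla\phi_0\rangle$ after integrating by parts in space. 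I would then substitute $\psi=\phi+th$ in $I(t):=\int G(\phi+th)\,\hat\mu(d\phi)$, rewrite $I(t)$ as an integral against $\hat\mu$ weighted by the above density, and differentiate at $t=0$. Direct differentiation of the original integrand gives $I'(0)=\int D_hG\,d\hat\mu$, while differentiating the density gives $I'(0)=\frac{2}{\sigma^2}\int G\,\langle\nabla h,\nabla\phi\rangle\,d\hat\mu$; equating the two and multiplying by $\frac{\sigma^2}2$ is exactly \eqref{e:GIBP-Classical}.

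The one delicate point — and where I expect the genuine work to lie — is the flat constant-mode factor of $\hat\mu=dm\otimes d\mu$. The spatial gradient annihilates constants, so $\langle\nabla h,\nabla\phi\rangle=\langle\nabla h_0,\nabla\phi_0\rangle$ depends only on zero-mean parts, whereas $D_hG$ contains an additional constant-mode piece $\bar h\,\partial_mG(m+\phi_0)$. Under the shift $m\mapsto m+t\bar h$ the translation-invariant Lebesgue measure $dm$ contributes no Girsanov factor, which is precisely why this piece must integrate to zero; and indeed $\int_{\R}\partial_mG(m+\phi_0)\,dm=0$ because $q$'s compact support makes $m\mapsto G(m+\phi_0)$ compactly supported whenever some $\int f_i\,\area_0\neq0$, while $G$ is $m$-independent (so $\partial_mG\equiv0$) otherwise. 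Thus the constant mode decouples cleanly and only the $\mu$-computation survives. Finally, differentiation under the integral sign must be justified: since $G$ and $\partial_iq$ are bounded with compact support and $\langle\nabla h,\nabla\phi\rangle$ has Gaussian tails under $\mu$, the difference quotients are dominated uniformly for small $t$, so dominated convergence applies.
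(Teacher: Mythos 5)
Your proof is correct: the paper itself states this classical lemma without proof (deferring to \cite{AlbeverioHK74}), and your Cameron--Martin argument --- Girsanov density for the zero-mean part of $h$, translation invariance of $dm$ for the constant mode (via the fact that $m\mapsto G(m+\phi_0)$ is compactly supported when some $\int f_i\,\area_0\neq 0$ and $\partial_m G\equiv 0$ otherwise), and dominated convergence to differentiate at $t=0$ --- is essentially the same strategy the paper uses to prove its own analogue, Lemma~\ref{lem:GIBP}. Nothing further is needed.
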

As usual, $\langle\nabla h, \nabla \phi\rangle$ is defined everywhere if $h\in H^{2+\eps}$ and a.e. (via Paley-Wiener) if $h\in H^1$.

\subsection{Test functionals}

We now define a class $\mc C$ of test functionals on $\Phi$ suitable for our purposes. To this end we recall the GMC mapping 
\begin{align*}
\Phi&\longrightarrow\MeaSp\\
\phi&\longmapsto M_\phi=\Wick {e^{2\phi}\omega_0}
\end{align*}
which is defined $\hat\mu$-almost everywhere, see Section \ref{Subsec:GMC}.

\begin{Def}\label{def:mcC}
Let $\TestSe$ be the space of functionals on $\Phi$
of the form
\begin{equ}[e:C-form]
G(\phi)=q( \M(f_0) , \M(f_1) ,\dots, \M( f_k))  \qquad \mbox{for } \phi\in \Phi
\end{equ}
such that $q: \R^{k+1} \to \R$ is a $C^2$ function
 and $f_i$ are smooth functions with $f_0\equiv1$.

 Let $\TestSp\subset \TestSe$ be the space of functionals $G\in \TestSe$ on $\Phi$
 such that there exists a   compactly supported $q: \R^{k+1} \to \R$ i.e.
\begin{equ}[e:zero-th-co]
\mbox{Supp}(q) \subset (\eps,\eps^{-1})  \times Q  \qquad (\mbox{for some } \eps\in(0,1) \mbox{ and } Q\subset \R^k \mbox{ compact})
\end{equ}
   so that $G$ is 
of the form \eqref{e:C-form}. 
In particular, elements of $\TestSp$ are bounded.
\end{Def}

We now compute Fr\'echet derivatives and gradient of functionals in $\TestSe$.
Denote by $C^0(\TT)$ the space of continuous functions on $\TT$.

\begin{Lem} \label{lem:DG}
Let  $G\in \TestSe$ be of the form \eqref{e:C-form}. 
Then  $G$ has the Fr\'echet derivative
\begin{equ}[e:DhG]
D_h G (\phi) 
= 2 \sum_{i=0}^k \partial_i q (\M(f_0),\dots,\M( f_k ))
\cdot \M( f_i h )
\end{equ}
for any $h\in C^0(\TT)\cap H$.
In particular $D_h \M( f)=2 \M(fh)$. 

The $L^2(\M)$-gradient of $G$ 
is characterized by 
\begin{equ}[e:gradchar]
\langle D G (\phi),h\rangle_{L^2(M_\phi)}=D_hG(\phi) 
\end{equ}
$\hat\mu$-a.e. for any $h\in C^0(\TT)\cap H$, and is
 given by
\begin{equ}[e:DG]
D G (\phi)= 2 \sum_{i=0}^k \partial_i q (\M( f_0 ),\dots, \M( f_k))
 \,f_i.
\end{equ}
Finally, if we further have $G\in \TestSp$, then  the Fr\'echet derivative $D_h G$ is bounded
 for all $\phi\in\Phi$.
\end{Lem}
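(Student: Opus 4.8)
The crux is the single-factor identity $D_h \M(f) = 2\,\M(fh)$; everything else is the finite-dimensional chain rule and bookkeeping, so I would isolate this first. For a fixed direction $h\in C^0(\TT)\cap H$ and a fixed $t\in\R$, the shift $th$ lies in $H=H^1$, so the shift-covariance \eqref{e:shiftcov} gives $dM_{\phi+th}=e^{2th}\,dM_\phi$ for $\hat\mu$-a.e.\ $\phi$, and hence
\[
\frac1t\big(M_{\phi+th}(f)-M_\phi(f)\big)=\int_{\TT} f\,\frac{e^{2th}-1}{t}\,dM_\phi .
\]
As $t\to0$ the integrand tends pointwise to $2fh$, and since $h$ is continuous on the compact torus and $f$ is smooth we have the uniform bound $\big|f\,(e^{2th}-1)/t\big|\le 2\|f\|_\infty\|h\|_\infty\,e^{2|t|\,\|h\|_\infty}$, which for $|t|\le1$ is dominated by a fixed constant, hence by an $M_\phi$-integrable function (recall $M_\phi$ is a finite measure). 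Dominated convergence then yields $D_h\M(f)=2\int_{\TT}fh\,dM_\phi=2\,\M(fh)$.

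The one delicate point — the step I expect to require the most care — is that \eqref{e:shiftcov} holds only $\hat\mu$-a.e.\ with an exceptional set that may depend on the shift $th$, whereas the Gateaux limit is a pointwise-in-$\phi$ statement as $t\to0$. I would resolve this by a Fubini argument on $\R\times\Phi$ equipped with $dt\otimes d\hat\mu$: the identity $M_{\phi+th}(f)=\int_{\TT} f\,e^{2th}\,dM_\phi$ holds for a.e.\ $(t,\phi)$, hence for $\hat\mu$-a.e.\ $\phi$ it holds for Lebesgue-a.e.\ $t$. For such $\phi$ the right-hand side $t\mapsto\int_{\TT} f\,e^{2th}\,dM_\phi$ is a genuine $C^1$ function of $t$ (differentiation under the integral is justified exactly as above), so evaluating the difference quotients along a countable dense sequence $t_n\to0$ pins down the value of the Gateaux derivative for $\hat\mu$-a.e.\ $\phi$. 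This is precisely what is needed for the a.e.\ statements \eqref{e:DhG}--\eqref{e:gradchar} and for the Dirichlet-form construction.

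With the single-factor derivative in hand, \eqref{e:DhG} follows from the finite-dimensional chain rule: the curve $t\mapsto(M_{\phi+th}(f_0),\dots,M_{\phi+th}(f_k))$ is differentiable at $t=0$ with velocity $\big(2\M(f_0h),\dots,2\M(f_kh)\big)$, and composing with the $C^2$ (in particular $C^1$) map $q$ gives $D_hG(\phi)=2\sum_{i=0}^k\partial_i q(\cdots)\,\M(f_ih)$, where $(\cdots)$ abbreviates the argument $(\M(f_0),\dots,\M(f_k))$. For the $L^2(M_\phi)$-gradient I would exhibit the candidate $DG(\phi)=2\sum_{i=0}^k\partial_i q(\cdots)\,f_i$, which lies in $L^2(M_\phi)$ since the $f_i$ are bounded and $M_\phi$ is finite, and verify
\[
\langle DG(\phi),h\rangle_{L^2(M_\phi)}
=\int_{\TT}\Big(2\sum_{i=0}^k\partial_i q(\cdots)\,f_i\Big)h\,dM_\phi
=2\sum_{i=0}^k\partial_i q(\cdots)\,\M(f_ih)
=D_hG(\phi)
\]
for every $h\in C^0(\TT)\cap H$. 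Since such $h$ are dense in $L^2(M_\phi)$, this determines $DG(\phi)$ uniquely and establishes \eqref{e:DG}.

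Finally, for $G\in\TestSp$ the boundedness of $D_hG$ uses only the support condition \eqref{e:zero-th-co}. Because $\partial_i q$ vanishes off $(\eps,\eps^{-1})\times Q$, on the set where $\partial_i q(\cdots)\neq0$ one has $M_\phi(\TT)=\M(f_0)<\eps^{-1}$ (recall $f_0\equiv1$); hence there $|\M(f_ih)|\le\|f_i\|_\infty\|h\|_\infty\,M_\phi(\TT)\le\eps^{-1}\|f_i\|_\infty\|h\|_\infty$. As $q\in C^2$ is compactly supported, its first derivatives are bounded, so each summand in \eqref{e:DhG} is bounded uniformly in $\phi$, giving the claimed bound on $D_hG$ for all $\phi\in\Phi$.
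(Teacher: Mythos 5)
Your proof is correct and follows essentially the same route as the paper: both rest on the shift covariance \eqref{e:shiftcov} plus dominated convergence to differentiate $t\mapsto M_{\phi+th}(f)$ (the paper carries out the chain rule by hand via a telescoping sum and the mean value theorem, where you invoke it directly after isolating $D_h\M(f)=2\M(fh)$), then identify the gradient through density of $C^0(\TT)\cap H$ in $L^2(M_\phi)$, and bound $D_hG$ using the support condition \eqref{e:zero-th-co}. Two marginal remarks: your Fubini treatment of the $t$-dependent exceptional set is a point of care the paper elides, while conversely the density of continuous functions in $L^2(M_\phi)$ — which you merely assert — is the one step the paper actually proves (inner/outer regularity of the Radon measure $M_\phi$ plus Urysohn's lemma).
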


\begin{proof}
We first remark that for a fixed $\phi$, $\M( f_i h ) < \infty$ 
so that the right-hand side of \eqref{e:DhG} is well-defined.
This is because $f_i$ and $h$ are continuous on $\TT$ thus bounded,
and $\M$ is finite.

By the shift property \eqref{e:shiftcov},
\[
M_{\phi+ t h } (f)= \big(e^{ 2 t \cdot h} M_\phi\big)(f) 
= \int_{\TT} f(x) e^{ 2 t \cdot h(x)} \M(dx)
\quad \mbox{a.e.}
\]
for any $h\in H$.
One then has
\begin{equs}
G& (\phi+th)    - G (\phi)
= q \left( \big(e^{ 2 t \cdot h} \M\big)(f_1)    ,   \dots, \big(e^{ 2 t \cdot h} \M\big)(f_k)  \right)
-q(\M(f_1),\dots, \M (f_k ))
\\
&  =\sum_{i=1}^k q \Big(\M(f_1),\dots,\M(f_{i-1}),  
	\big(e^{ 2 t \cdot h} \M\big)(f_i) ,\dots, \big(e^{ 2 t \cdot h} \M\big)(f_k)\Big)
	\\
	 &\qquad \qquad \qquad \qquad  -   q \Big(\M(f_1),\dots,\M(f_{i}),  
	\big(e^{ 2 t \cdot h} \M\big)(f_{i+1}) ,\dots, \big(e^{ 2 t \cdot h} \M\big)(f_k)\Big)
\\
& = t\cdot \sum_{i=1}^k \partial_i q \Big(\M(f_1),\dots,\M(f_{i-1}),  
	\big(e^{ 2 t_\star \cdot h} \M\big)(f_i) ,\dots, \big(e^{ 2 t \cdot h} \M\big)(f_k)\Big) 
	\\
&
\qquad\qquad\qquad\cdot 
\frac{d}{dt} \Big|_{t=t_\star} 
	 \int_{\TT} f_i(x) e^{ 2 t \cdot h(x)} \M(dx)
\end{equs}
for some $t_\star \in [0,t]$.
By the aforementioned boundedness of $h$  we can bound $e^{ 2 t \cdot h(x)} $ by a constant, 
and thus by dominated convergence theorem
one has that $\frac{d}{dt}   \Big|_{t=0}  G (\phi+th) $
is equal to the right-hand side of \eqref{e:DhG}.

Once we have the Fr\'echet derivatives $D_h G$,
the gradient $DG$ then exists and is unique by Riesz representation theorem, 
since the space $C^0(\TT)\cap H$ is dense in  $L^2(M_\phi)$.
Indeed, the GMC measure $M_\phi$ is a Radon measure that is inner and outer regular,
so for any Borel set $A\subset \TT$ there exist an open set $U$
and a compact set $K$ such that 
$K\subset  A\subset U$ and $\M(U\backslash K)$ is arbitrarily small,
then by Urysohn's lemma one obtains a 
continuous function which is supported on $U$ and equal to $1$ on $K$,
thus approximates the characteristic function of $A$ in the $L^2(M_\phi)$ topology.
The simple functions, namely  linear combinations 
of the characteristic functions are then dense in  $L^2(M_\phi)$ by construction
of integrals with respect to the GMC $\M$.

Regarding the identity \eqref{e:DG},  with $DG$ in  \eqref{e:DG}  one can immediately check that
\[
\M(h \cdot DG) = D_hG \;,
\]
namely \eqref{e:gradchar} holds.

If $G\in \TestSp$, $D_h G$ is  bounded. 
In fact, since $f_i,h$ are continuous, $|\M(f_i h )|\le C \M(1)$;
and by the fact that 
 $\partial_i q=0$ when  $\M(1)\notin (\eps,\eps^{-1})$, one obtains the boundedness of
\[
\partial_i q (\M(1),\M(f_1),\dots,\M( f_k ))  \M(1) \;.
\]
\end{proof}

Obviously, $D_h G$ and $DG$ do not depend on the representation 
 \eqref{e:C-form}, namely if  $G(\phi)$ is equal to $\tilde q( \M(\tilde f_0) ,\cdots, \M(\tilde  f_\ell))$ for some other functions  $\tilde q$  and $\{\tilde f_1,\cdots,\tilde f_\ell\}$ and $\ell\ge 0$, then 
 the right-hand side of \eqref{e:DhG} or \eqref{e:DG} 
 with $q$  and $\{f_1,\cdots, f_k\}$
 replaced by $\tilde q$  and $\{\tilde f_1,\cdots,\tilde f_\ell\}$ remains identical. 
Indeed we showed that $D_hG=\lim_{t\searrow 0}\frac{G(\phi+th)-G(\phi)}t$ a.e., and $DG$ is characterized by the $D_hG$'s.

We also note that the Leibniz rule holds:
\begin{equ}[e:Leibniz]
D_h (GH) = (D_h G) H + G (D_h H) \qquad \forall \; G,H\in\TestSe \;.
\end{equ}
 Indeed, for $G(\phi)=q(\M(f_1) ,\cdots, \M( f_k))  $ and $H(\phi)=p(\M(g_1) ,\cdots, \M( g_\ell))  $,
 we have $(GH)(\phi) = r (\M(f_1) ,\cdots, \M( f_k),\M(g_1) ,\cdots, \M( g_\ell))$
 where $r(x_1,\cdots,y_\ell) $ equals $ q(x_1,\cdots,x_k) p(y_1,\cdots,y_\ell)$.
 So by the formula  \eqref{e:DhG} we have that $D_h(GH)(\phi)$ equals
 $2\sum_{i=1}^k \partial_{x_i} r \cdot \M(f_i h) + 2\sum_{j=1}^\ell \partial_{y_j} r \cdot \M(g_jh) $
which is equal to the right-hand side of \eqref{e:Leibniz}.

\subsection{Proof of integration by parts}

\begin{Lem} \label{lem:GIBP}
Let $\mu$ be the law of a mean zero GFF $\phi_0$ on $\TT$, with covariance operator $\frac {\sigma^2}2(-\Lap)^{-1}$, $\phi=\phi_0+m$, and $d\hat\mu(\phi)=dm\otimes d\mu(\phi_0)$.
Then we have the following Gaussian integration by parts
\begin{equ}[e:GIBP]
 \frac{\sigma^2}2 \int D_hG(\phi)\hat\mu(d\phi)
=\int G(\phi) \langle \nabla h, \nabla \phi \rangle \hat\mu(d\phi)\;,
\qquad
\forall G\in \TestSp 
\end{equ}
and $D_h$ is the Fr\'echet derivative in the Cameron-Martin direction $h\in C^0(\TT)\cap H$.
\end{Lem}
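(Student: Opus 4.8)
The plan is to establish \eqref{e:GIBP} by combining the shift covariance of GMC \eqref{e:shiftcov} with the Cameron--Martin theorem for the Gaussian field $\mu$, and then differentiating in the shift parameter at $t=0$. Both the Fréchet derivative $D_hG$ (as computed in Lemma~\ref{lem:DG}) and the pairing $\langle\nabla h,\nabla\phi\rangle$ are linear in $h$, so I would first split the direction as $h=\bar h+h_0$, where $\bar h\in\R$ is the constant (mean) part and $h_0$ is the zero-mean part, and treat the two contributions separately against the product measure $d\hat\mu=dm\otimes d\mu(\phi_0)$. Since $\nabla\bar h=0$, the right-hand side of \eqref{e:GIBP} only sees $h_0$, which is the reason this decomposition is natural.

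For the zero-mean direction $h_0$, the starting point is shift covariance: by \eqref{e:shiftcov} one has $M_{\phi+th_0}(f_i)=\M(e^{2th_0}f_i)$ for $\hat\mu$-a.e.\ $\phi$, so $G(\phi+th_0)=q\big(\M(e^{2th_0}f_0),\dots,\M(e^{2th_0}f_k)\big)$, whose $t$-derivative at $t=0$ reproduces the right-hand side of \eqref{e:DhG}, i.e.\ $D_{h_0}G$. On the other hand, since $\mu$ has covariance $\frac{\sigma^2}{2}(-\Delta)^{-1}$, its Cameron--Martin norm is $\|h_0\|_H^2=\frac{2}{\sigma^2}\int_{\TT}|\nabla h_0|^2\,\area_0$, and the Cameron--Martin formula (applied in the $\phi_0$ variable, with $m$ a Fubini bystander) gives
\[
\int G(\phi+th_0)\,d\hat\mu(\phi)
=\int G(\phi)\,\exp\!\Big(\tfrac{2}{\sigma^2}\,t\,\langle\nabla\phi,\nabla h_0\rangle-\tfrac{t^2}{2}\,\|h_0\|_H^2\Big)\,d\hat\mu(\phi),
\]
where $\langle\nabla\phi,\nabla h_0\rangle$ is the Paley--Wiener integral, defined $\hat\mu$-a.e.\ for $h_0\in H^1$. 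Differentiating both sides at $t=0$ yields $\int D_{h_0}G\,d\hat\mu=\tfrac{2}{\sigma^2}\int G\,\langle\nabla\phi,\nabla h_0\rangle\,d\hat\mu$, which after multiplying by $\tfrac{\sigma^2}{2}$ is exactly \eqref{e:GIBP} in the direction $h_0$, using that $\langle\nabla h,\nabla\phi\rangle=\langle\nabla h_0,\nabla\phi\rangle$.

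For the constant direction $\bar h$, shifting $\phi$ by $t\bar h$ simply translates $m\mapsto m+t\bar h$, under which the Lebesgue factor $dm$ is invariant; hence $\int G(\phi+t\bar h)\,d\hat\mu$ is constant in $t$, and differentiating gives $\int D_{\bar h}G\,d\hat\mu=0$. This matches the right-hand side, which vanishes because $\nabla\bar h=0$. Adding the two pieces and using $D_hG=D_{\bar h}G+D_{h_0}G$ completes the argument.

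The main obstacle is justifying the differentiation under the integral at $t=0$, and there are two integrability points to control, both handled by the hypothesis $G\in\TestSp$. First, the base measure contains the non-finite Lebesgue factor $dm$; but by shift covariance $\M(1)=e^{2m}M_{\phi_0}(1)$, so the compact support condition \eqref{e:zero-th-co} forces $m$ to lie in a bounded interval of fixed length (uniformly in $\phi_0$, and uniformly for $t$ in a neighborhood of $0$ after accounting for the bounded factor $e^{2th_0}$), making the $m$-integral finite. Second, the Cameron--Martin density $\exp(\tfrac{2}{\sigma^2}t\langle\nabla\phi,\nabla h_0\rangle-\tfrac{t^2}{2}\|h_0\|_H^2)$ is unbounded, so dominated convergence is not immediate; here I would bound the difference quotient uniformly for small $t$ using that $G$ is bounded together with the fact that the Gaussian linear functional $\langle\nabla\phi,\nabla h_0\rangle$ has Gaussian tails under $\mu$, hence its exponential has finite moments of every order. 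This supplies an integrable dominating function, legitimizes passing the derivative inside the integral, and simultaneously ensures that the right-hand side of \eqref{e:GIBP} is finite.
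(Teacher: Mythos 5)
Your proof is correct and follows essentially the same route as the paper's: a Cameron--Martin shift in the $\phi_0$ variable combined with GMC shift covariance (Lemma~\ref{lem:DG}), then differentiation at $t=0$ justified by dominated convergence using the support condition \eqref{e:zero-th-co} and the exponential integrability from Lemma~\ref{Lem:sigmafin}, with constant directions handled by translation invariance of $dm$. The only cosmetic difference is that you split $h=\bar h+h_0$ at the outset, whereas the paper proves the identity for mean-zero $h$ first and then treats constant $h$ and concludes by linearity.
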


\begin{proof}
By boundedness of $D_h G$ from Lemma~\ref{lem:DG}
and boundedness of  $G$ by definition, both sides of \eqref{e:GIBP} are well-defined.
Recall that $H=H^1$ is the Cameron-Martin Hilbert space, endowed with
$\langle \cdot,\cdot \rangle_H$.
 For $h\in H$ with mean zero and $t\in \R$, one has the Cameron-Martin formula
\[
\frac{d T^{th}_* \mu} {d\mu}
=
\exp \Big( t \langle \phi_0, h\rangle_H - \frac{t^2}{2} \|h\|_H^2
\Big)
\]
where $T^{th}_* \mu$ denotes the push-forward measure of $\mu$
in the direction $th$.
Let $G$ be as assumed above.
One then has
\begin{equs} [e:to-be-diff-t]
\int_\R  \int   &  G(m+\phi_0 + th)\mu(d\phi_0)dm
\\ &=
\int_\R  \int G(m+\phi_0)  \exp \Big( \frac{2}{\sigma^2}t \langle \phi_0, h\rangle_H - \frac{t^2}{\sigma^2} \|h\|_H^2
\Big) \mu(d\phi_0) dm\;.
\end{equs}
Since $\langle \phi_0, h\rangle_H = \int\nabla h\cdot\nabla \phi_0=\langle \phi, h\rangle_H $,
it remains to differentiate the above identity in $t$ at $t=0$
using dominated convergence theorem.

Again since $G$ has bounded Fr\'echet derivative
by Lemma~\ref{lem:DG},
we have that  
 differentiating the l.h.s. of \eqref{e:to-be-diff-t} w.r.t. $t$ at $t=0$ using dominated convergence yields 
l.h.s. of \eqref{e:GIBP}.

For  the r.h.s. of \eqref{e:to-be-diff-t}, for sufficiently small $t>0$ one has
\begin{equs}
 \Big|\frac{d}{dt} \exp \Big( t \langle \phi_0, h\rangle_H - \frac{t^2}{2} \|h\|_H^2
\Big)\Big|
&=   \Big| \langle \phi_0, h\rangle_H - t \|h\|_H^2
\Big|
\exp \Big( t \langle \phi_0, h\rangle_H - \frac{t^2}{2} \|h\|_H^2
\Big) 
\\
&\le C \exp \Big(a \Big| \langle \phi_0, h\rangle_H \Big|\Big)
\end{equs}
for some  constants $a,C>0$.

By \eqref{e:exp-mom} in Lemma~\ref{Lem:sigmafin}  and boundedness of $G$,
one has that 
\[
G(m+\phi_0) \cdot \exp \Big(a \Big| \langle \phi_0, h\rangle_H \Big|\Big)
\]
 is $\hat\mu$ (namely $m\otimes \mu$)-integrable over fields 
such that 
$M_\phi(\TT) \in [\eps,\eps^{-1}]$, where $\eps$ is the constant arising from the specification of the support in the first coordinate of $G$, see \eqref{e:zero-th-co}.
For $M_\phi(\TT) \notin [\eps,\eps^{-1}]$, $G(m+\phi_0) $ simply vanishes by assumption.
Therefore dominated convergence applies and the derivative of the r.h.s. of \eqref{e:to-be-diff-t} w.r.t. $t$ at $t=0$ is
\[
\int_\R\int G(\phi_0+m) \langle \nabla h, \nabla \phi_0 \rangle \mu(d\phi_0)dm\;.
\]
This is the r.h.s. of \eqref{e:GIBP}. 
We thus showed that \eqref{e:GIBP} holds if $h$ has mean zero. If $h$ is constant, both sides of \eqref{e:GIBP} are zero (by translation invariance of $dm$); this concludes by linearity.
\end{proof}

Let
$\nu$ be the Liouville CFT measure on $H^{-\eps}$ given by (recall the conventions about the parameters  
\eqref{e:lambda-sigma-mu})
\begin{equ}[e:def-nu]
d\nu(\phi)=  \exp\Big(  -\frac\lambda{\sigma^2}
\M(\TT)\Big)dm\otimes d\mu(\phi_0)
\end{equ}
where $\phi=m+\phi_0$. 
The Liouville CFT measure $\nu$ has been rigorously constructed 
by \cite{MR3450564,guillarmou2016polyakov} with suitable insertions
of vertex operators. Without insertions on the torus $\nu$ is not normalizable
because the integral of $\nu$ would diverge as $\phi \to -\infty$ (so that $\M(\TT)\to 0$). 
Here we do not consider insertions but 
instead we verify that $\nu$ is $\sigma$-finite, see Lemma~\ref{Lem:sigmafin}.

We start with a basic integrability result. Recall that $\nu$ depends on the parameters $\sigma<\sigma_{L^1}$ and $\lambda>0$.
\begin{Lem}\label{Lem:sigmafin}
$\nu$ is $\sigma$-finite; more precisely, for any $\eps\in (0,1)$, 
\[
\nu(\{\phi: \eps<M_\phi(\TT)<\eps^{-1}\}) <\infty \;.
\]
Moreover, for $f\in H$,
\begin{equ}[e:int-dfdphi-restrict]
\phi  \mapsto \langle f,\Lap\phi\rangle\ind_{[\eps,\eps^{-1}]}(M_\phi(\TT))
\end{equ}
is  in  $L^p(\hat\mu)$ and $L^p(\nu)$ for all $p\in [1,\infty)$, 
and for $a>0$  
\begin{equ}[e:exp-mom]
\phi \mapsto \exp \big( a \big| \langle \phi_0,f\rangle_H\big|  \big) \ind_{[\eps,\eps^{-1}]}(M_\phi(\TT)) 
\end{equ}
is  integrable with respect to $\hat\mu$ and $\nu$. 
\end{Lem}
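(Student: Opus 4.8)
The plan is to reduce every assertion to a Gaussian average over the mean-zero field $\mu$ together with an elementary integral over the constant mode, exploiting the product structure $d\hat\mu(\phi)=dm\otimes d\mu(\phi_0)$ and the shift covariance \eqref{e:shiftcov} of GMC. First I would record the key identity: since $m$ is a constant function, \eqref{e:shiftcov} gives $M_\phi=e^{2m}M_{\phi_0}$, hence $M_\phi(\TT)=e^{2m}W$ where $W:=M_{\phi_0}(\TT)$. Because $\sigma<\sigma_{L^1}=2\sqrt\pi$, the GMC is nontrivial and has finite total mass, so $W\in(0,\infty)$ for $\mu$-a.e. $\phi_0$. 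The crucial observation is that, for fixed $\phi_0$, the set $\{m\in\R:\eps\le e^{2m}W\le\eps^{-1}\}$ is an interval whose length equals $\log(1/\eps)$ \emph{independently} of $W$; thus integrating the indicator $\ind_{[\eps,\eps^{-1}]}(M_\phi(\TT))$ against $dm$ always produces the same finite constant $\log(1/\eps)$. This decoupling of the constant mode is the mechanism behind all three assertions.

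For $\sigma$-finiteness I would then compute, by Fubini and the bound $\exp(-\tfrac{\lambda}{\sigma^2}M_\phi(\TT))\le 1$,
\[
\nu(\{\eps<M_\phi(\TT)<\eps^{-1}\})
\le \int \Big(\int_\R \ind_{\{\eps<e^{2m}W<\eps^{-1}\}}\,dm\Big)\,d\mu(\phi_0)
=\log(1/\eps),
\]
which is finite; since $M_\phi(\TT)\in(0,\infty)$ for $\mu$-a.e.\ $\phi_0$, the sets $\{1/n<M_\phi(\TT)<n\}$ exhaust $\Phi$ up to a $\nu$-null set as $n\to\infty$, so $\nu$ is $\sigma$-finite. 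For the remaining two integrability claims I would first identify the relevant functional: since $\Lap\phi=\Lap\phi_0$, integration by parts gives $\langle f,\Lap\phi\rangle=-\langle f,\phi_0\rangle_H$, which for $f\in H$ is, via Paley--Wiener, a centered Gaussian under $\mu$ with variance $\tfrac{\sigma^2}2\|f\|_H^2$ and, crucially, not depending on $m$.

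With these in hand, both \eqref{e:int-dfdphi-restrict} and \eqref{e:exp-mom} follow by the same Fubini step: integrating the $m$-indicator yields the factor $\log(1/\eps)$, leaving a pure Gaussian integral $\int|\langle f,\phi_0\rangle_H|^p\,d\mu$, respectively $\int\exp(a|\langle f,\phi_0\rangle_H|)\,d\mu$, both finite because a nondegenerate Gaussian has all polynomial and exponential moments. The passage from $\hat\mu$ to $\nu$ is then immediate from $d\nu=\exp(-\tfrac{\lambda}{\sigma^2}M_\phi(\TT))\,d\hat\mu\le d\hat\mu$. The only points demanding genuine care are that the $m$-integral of the indicator be bounded uniformly in $\phi_0$ (so that one never needs any independence between $\langle f,\phi_0\rangle_H$ and $W$, which in general fails) and that $\langle f,\Lap\phi\rangle$ be interpreted as the a.e.-defined Paley--Wiener functional even though $\phi_0\notin H$; neither is a serious obstacle, so I expect the proof to be short.
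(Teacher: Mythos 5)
Your proof is correct, and it takes a genuinely different (and simpler) route than the paper's. Both arguments start from the same two observations---the factorization $d\hat\mu = dm\otimes d\mu(\phi_0)$ together with $M_\phi(\TT)=e^{2m}M_{\phi_0}(\TT)$, and the fact that $\langle f,\Lap\phi\rangle=\langle f,\Lap\phi_0\rangle$ does not depend on $m$---but they treat the $m$-integral differently. The paper discretizes $\R$ into blocks of length $|\log\eps|$, on which the indicator forces $M_{\phi_0}(\TT)$ into intervals $(\eps^{2k+3},\eps^{2k-1})$; $\sigma$-finiteness then follows from disjointness (along every other $k$) and finiteness of $\mu$. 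But for the $L^p$ and exponential claims the paper must sum $\bigl\| \cdot \bigr\|_{L^2(\mu)}\,\mu\bigl(\{\eps^{3+2k}<M_{\phi_0}(\TT)<\eps^{2k-1}\}\bigr)^{1/2}$ over $k$, and disjointness alone does not make the square roots summable; this is exactly where the paper imports the positive and negative moment bounds for the GMC total mass from \cite{MR2642887}, Markov's inequality (to get decay in $k$), and Cauchy--Schwarz (to decouple the Gaussian factor from the indicator). Your observation that the $m$-section $\{m:\eps\le e^{2m}W\le \eps^{-1}\}$ has Lebesgue measure exactly $\log(1/\eps)$, \emph{independently} of $W\in(0,\infty)$, renders all of that machinery unnecessary: Tonelli factors each of the three integrals into $\log(1/\eps)$ times a pure Gaussian moment, so no GMC moment estimates, no tail bounds, and no decoupling are needed---the only GMC input is that the total mass is a.s.\ in $(0,\infty)$ when $\sigma<2\sqrt\pi$. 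The only thing the paper's heavier approach buys is robustness: its tail estimates would still apply if the integrand genuinely depended on $m$, whereas your factorization exploits that it does not; for this lemma as stated, your argument is shorter and self-contained.
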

\begin{proof}
Note that the Gaussian measure $\mu$ on zero-mean fields $\{\phi_0\}$ on the torus is a probability measure. 
By the shift property of the GMC, we have
\begin{align*}
\int_{-\infty}^\infty  \int_{\Phi} &
\ind_{\eps<M_\phi(\TT)<\eps^{-1}}d\mu(\phi_0)dm
=
\int_{-\infty}^\infty\int_{\Phi}
\ind_{\eps e^{-2m}<M_{\phi_0}(\TT)<\eps^{-1}e^{-2m}}d\mu(\phi_0)dm\\
&\leq
\sum_{k=-\infty}^\infty\int_{k|\log\eps|}^{(k+1)|\log\eps|} 
\mu\Big( \{\eps^{2k+3}<M_{\phi_0}(\TT)<\eps^{2k-1}\}\Big) dm
\\
&\leq C'\sum_{k=-\infty}^\infty
	\mu\Big( \{\eps^{4k+3}<M_{\phi_0}(\TT)<\eps^{4k-1}\}\Big) \le C
<\infty
\end{align*}
where the constants $C',C$ depend on $\eps$. 
Here note that for  the intervals $(\eps^{2k+3},\eps^{2k-1}) \subset (0,\infty)$
in the second line only two adjacent intervals overlap, and
 in the last step the intervals $(\eps^{4k+3},\eps^{4k-1}) \subset (0,\infty)$ are non-overlapping
 so that we can make use of the fact that $\mu$ is a finite measure. 
This shows that
\[
\hat\mu(\{\phi: \eps<M_\phi(\TT)<\eps^{-1}\}) \le C \;.
\]
Together with $\exp\Big(  -\frac\lambda{\sigma^2}
\M(\TT)\Big)\le 1$ this
 gives the first claim.

From Proposition 3.5 and 3.6 in \cite{MR2642887} we have the following positive and negative moment estimates for the total mass of a GMC:
\[
\int_\Phi (M_{\phi_0}(\TT))^pd\mu(\phi_0)<\infty
\]
for all $p<0$ and for some $p=p(\sigma)>1$. In particular, by Markov's inequality, for $x$ large,
\[
\mu(\{M_{\phi_0}(\TT)\geq x\})=O(x^{-1}),\quad \mu(\{M_{\phi_0}(\TT)\leq x^{-1}\})=O(x^{-1})\;.
\]
For $f\in H$, $\langle f,\Lap\phi\rangle=\langle f,\Lap\phi_0\rangle$ 
 is Gaussian and hence has moments of all orders (under $\mu$). 
Then
\begin{equs}
\int_{-\infty}^\infty\int_{\Phi}  &
|\langle f,\Lap\phi\rangle|^p\ind_{[\eps,\eps^{-1}]}(M_\phi(\TT))
dmd\mu(\phi_0)
\\
&\leq\sum_{k=-\infty}^\infty\int_{k|\log\eps|}^{(k+1)|\log\eps|}  \int_{\Phi}
	|\langle f,\Lap\phi\rangle|^p\ind_{[\eps^{3+2k},\eps^{2k-1}]}(M_{\phi_0}(\TT))
dmd\mu(\phi_0)\\
&\leq C\sum_{k=-\infty}^\infty \left\| |\langle f,\Lap{\phi_0}\rangle|^p\right\|_{L^2(\mu)}
	\Big(\mu\{\eps^{3+2k}<M_{\phi_0}(\TT)<\eps^{2k-1}\}\Big)^{1/2}\\
&<\infty
\label{e:p-mom-cut}
\end{equs}
by Cauchy-Schwarz and the previous estimate.

The last statement \eqref{e:exp-mom} is proved in the same way 
given that $  \langle \phi_0,f\rangle_H $ is centered Gaussian random variable with variance
$\|f\|_H^2$,
so that
\[
\|   e^{a | \langle \phi_0,f\rangle_H | }   \|_{L^2(\mu)}
\le   \| e^{a  \langle \phi_0,f\rangle_H  }   \|_{L^2(\mu)}
+  \| e^{ -a  \langle \phi_0,f\rangle_H  }   \|_{L^2(\mu)}
 = 2 e^{a^2 \|f\|_H^2 } <\infty\;.
\]
The estimate \eqref{e:p-mom-cut} with $\left\| |\langle f,\Lap{\phi_0}\rangle|^p\right\|_{L^2(\mu)}$ replaced by 
$\|  e^{a  | \langle \phi_0,f\rangle_H | }   \|_{L^2(\mu)} $ 
 then shows that
  $ e^{a | \langle \phi_0,f\rangle_H|} \ind_{[\eps,\eps^{-1}]}(M_\phi(\TT))$
is integrable for  $a>0$ with respect to the Gaussian measure $\hat\mu$ and thus also $\nu$.
\end{proof}

\begin{thm} \label{thm:LIBP}
(Integration by parts for Liouville CFT $\nu$) For any  $G\in \TestSp$ and $h \in C^0(\TT)\cap H$
\begin{equ}[e:LIBP]
\int G(\phi)  \langle \nabla\phi, \nabla h \rangle d\nu(\phi)=\int \left(\frac{\sigma^2}2D_h G(\phi)
	- \lambda G(\phi)  \M(h)\right)d\nu (\phi) \;.
\end{equ}
\end{thm}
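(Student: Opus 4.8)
The plan is to reduce the integration-by-parts formula for the Liouville measure $\nu$ to the Gaussian integration-by-parts formula of Lemma~\ref{lem:GIBP} by absorbing the Radon--Nikodym density of $\nu$ with respect to $\hat\mu$ into the test functional. Writing $F(\phi) = \exp\big(-\frac{\lambda}{\sigma^2}\M(\TT)\big)$, the definition \eqref{e:def-nu} says precisely that $d\nu = F\,d\hat\mu$. Since one cannot integrate by parts directly against $\nu$, the idea is to apply \eqref{e:GIBP} to the product $GF$ and then redistribute the derivative that falls on the factor $F$.

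First I would check that $GF \in \TestSp$, so that Lemma~\ref{lem:GIBP} applies. The density $F$ is itself of the form \eqref{e:C-form}, namely $F(\phi) = q_F(\M(f_0))$ with $f_0\equiv 1$ and $q_F(x_0)=\exp(-\frac{\lambda}{\sigma^2}x_0)$, so $F\in\TestSe$; it fails to lie in $\TestSp$ only because $q_F$ is not compactly supported. However, for $G\in\TestSp$ the product $GF$ can be written as $r(\M(1),\M(f_1),\dots,\M(f_k))$ with $r(x_0,\dots,x_k)=q_G(x_0,\dots,x_k)\,\exp(-\frac{\lambda}{\sigma^2}x_0)$, and since $q_G$ is $C^2$ and compactly supported with support contained in $(\eps,\eps^{-1})\times Q$ as in \eqref{e:zero-th-co}, so is $r$; hence $GF\in\TestSp$. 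This same compact support is what keeps the Fr\'echet derivative of $GF$ bounded: on the support of $G$ one has $\M(\TT)\in(\eps,\eps^{-1})$, so that $|\M(h)|\le \|h\|_\infty\,\M(\TT)$ is bounded there, which controls the term coming from $D_hF$ below.

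Next I would run the computation. By the Leibniz rule \eqref{e:Leibniz}, $D_h(GF)=(D_hG)F+G(D_hF)$, and applying \eqref{e:DhG} to $F$ (a chain rule through the single argument $\M(1)$, using $D_h\M(1)=2\M(h)$) gives
\[
D_hF = 2\,q_F'(\M(1))\,\M(h) = -\frac{2\lambda}{\sigma^2}\,F\,\M(h) \;.
\]
Feeding $GF$ into the Gaussian identity \eqref{e:GIBP} and inserting $D_h(GF)=(D_hG)F-\frac{2\lambda}{\sigma^2}GF\,\M(h)$ yields
\[
\frac{\sigma^2}{2}\int\Big((D_hG)F-\frac{2\lambda}{\sigma^2}GF\,\M(h)\Big)\,d\hat\mu
= \int GF\,\langle\nabla h,\nabla\phi\rangle\,d\hat\mu \;.
\]
Replacing $F\,d\hat\mu$ by $d\nu$ throughout and rearranging then gives exactly \eqref{e:LIBP}.

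The step I expect to require the most care is not the algebra but the justification of the interchanges and of the applicability of \eqref{e:GIBP}: one must know that $GF$ and its Fr\'echet derivative are bounded and that all the functionals in play are $\hat\mu$- and $\nu$-integrable. This is precisely what the compact support in the first coordinate buys, together with the $\sigma$-finiteness and the integrability estimates of Lemma~\ref{Lem:sigmafin} (in particular \eqref{e:exp-mom}), which ensure that the dominated-convergence arguments underlying Lemma~\ref{lem:GIBP} still go through once the bounded density $F\le 1$ has been inserted.
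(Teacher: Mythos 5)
Your proposal is correct and follows essentially the same route as the paper's proof: the paper likewise applies the Gaussian integration-by-parts formula of Lemma~\ref{lem:GIBP} to the product $G(\phi)\,e^{-\frac{\lambda}{\sigma^2}\M(1)}\in\TestSp$, computes $D_h e^{-\frac{\lambda}{\sigma^2}\M(1)}=-\frac{2\lambda}{\sigma^2}\M(h)\,e^{-\frac{\lambda}{\sigma^2}\M(1)}$ via Lemma~\ref{lem:DG}, and redistributes the derivative with the Leibniz rule \eqref{e:Leibniz}. Your explicit verification that the product lies in $\TestSp$ (by absorbing the exponential into a compactly supported $C^2$ function $r$) is a detail the paper only asserts, and your closing remarks on integrability match the paper's opening integrability check via Lemma~\ref{Lem:sigmafin}.
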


\begin{proof}
We first remark that all the three terms in \eqref{e:LIBP} are $\nu$-integrable.
Indeed, the left-hand side of  \eqref{e:LIBP} is finite, 
since by the assumption \eqref{e:zero-th-co}  one can bound $G$ by a constant times
$ \ind_{[\eps,\eps^{-1}]}(M_\phi(1)) $, and then we apply the integrability of \eqref{e:int-dfdphi-restrict}
in Lemma~\ref{Lem:sigmafin}. 
Regarding the right-hand side of  \eqref{e:LIBP},
by the formula \eqref{e:DhG} and the assumption \eqref{e:zero-th-co}  
we can bound 
$D_h G $  by $C\eps^{-1} \ind_{[\eps,\eps^{-1}]}(M_\phi(1))$ for some constant $C>0$,
which is again integrable by Lemma~\ref{Lem:sigmafin}. 
The same bound holds for $\lambda G(\phi)  \M(h)$ 
and thus is integrable too. 

To prove  \eqref{e:LIBP}, note that 
the left-hand side of \eqref{e:LIBP} equals 
\begin{equ}[e:applyGIBP]
\int G(\phi)  \langle \nabla\phi, \nabla h \rangle
e^{-  \frac\lambda{\sigma^2}  \M(1)}
 d\hat\mu(\phi)
 =
 \frac{\sigma^2}{2}
\int D_h \Big(G(\phi) e^{- \frac\lambda{\sigma^2} \M(1)}\Big)
d\hat\mu(\phi)
\end{equ}
where we applied
 Lemma~\ref{lem:GIBP}  to the functional $G(\phi) e^{-\frac\lambda{\sigma^2}\M(1)} \in \TestSp$. 

By Lemma~\ref{lem:DG}
\begin{equ}[e:Dhapplyto]
D_h  e^{- \frac\lambda{\sigma^2} \M(1)}
= -\frac{2\lambda}{\sigma^2}
\M(h)
e^{- \frac\lambda{\sigma^2} \M(1)} \;.
\end{equ}
Invoking this in \eqref{e:applyGIBP} and applying \eqref{e:Leibniz} we obtain the
right-hand side
 of 
\eqref{e:LIBP}.
\end{proof}

\section{Solution via Dirichlet forms} \label{sec:solution-Dirichlet}

The theory of Dirichlet forms is a general framework for constructing solutions to stochastic differential equations.
For convenience of readers from different backgrounds, we briefly recall the key notions in this theory.
In general, given a real Hilbert space $(H,\langle \;,\;\rangle_H)$, which we always think of as a space of the form $L^2(X,m)$ for some $\sigma$-finite measure space $(X,m)$, a non-negative definite symmetric bilinear form $\mathcal{E}$ defined on a dense set $\mathcal{D}[\mathcal{E}]$ of $H$ is called a symmetric form on $H$.
An inner product can be defined on the domain $\mathcal{D}[\mathcal{E}]$ so that $\mathcal{D}[\mathcal{E}]$ becomes a pre-Hilbert space: indeed, for each $\alpha>0$,
$\mathcal{E}_\alpha (u,v) := \mathcal{E} (u,v) + \alpha \langle u,v\rangle_H$ defined on $\mathcal{D}[\mathcal{E}]$ is again a symmetric form, and this gives a metric (which is equivalent for different $\alpha>0$).
\footnote{We note that in general,  $\mathcal{D}[\mathcal{E}]$ is not even a pre-Hilbert space with respect to  $ \mathcal{E} $ (that is, $\mathcal{E}_\alpha$ with $\alpha=0$).}
A symmetric form is called closed if  $\mathcal{D}[\mathcal{E}]$ is complete with respect to this metric (i.e. Cauchy sequences converge in  $\mathcal{D}[\mathcal{E}]$ under this metric), namely, $\mathcal{D}[\mathcal{E}]$ is actually Hilbert.
{\it A Dirichlet form}, by definition, is then  a symmetric form which is Markovian and closed.
We often call $(\mathcal{E}, \mathcal{D}[\mathcal{E}])$ a ``Dirichlet space''.

Given a symmetric form $\mathcal{E}$, and ``extension'' of $\mathcal{E}$ is just another symmetric form whose domain contains $\mathcal{D}[\mathcal{E}]$, and restricting to $\mathcal{D}[\mathcal{E}]$ the two symmetric forms are identical. A symmetric form $\mathcal{E}$ having a closed extension is equivalent with saying that $\mathcal{E}$ is closable.  We refer to \cite[Section~1.1]{Fukushima2011} for more discussion of these notions.

In a nutshell, the link connecting the theory of Dirichlet forms with Markov processes is 
that there is a one-to-one correspondence between the family of closed symmetric forms on $H$ and the family of non-positive definite self-adjoint operators $A$ (serving as generators of the processes) on $H$,
given by $\mathcal{E} (u,v) = \langle \sqrt{-A}u,\sqrt{-A} v\rangle_H$.
The standard notions in stochastic processes, such as the family of strongly continuous semigroups, and the family of strongly continuous resolvents, are then also in one-to-one correspondences with this family of generators.
Markovian property of forms is shown to be equivalent to the Markovian properties of the associated semigroups and resolvents.

The main result in the theory of Dirichlet forms states that 
as long as a given Dirichlet form $\mathcal{E}$ is ``regular'' (namely it has a ``core'' which is by definition a subset $\mathcal{C}$ of $\mathcal{D}[\mathcal{E}] \cap C(X)$
such that $\mathcal{C}$ is dense in $\mathcal{D}[\mathcal{E}]$ with  norm  $\mathcal{E}_\alpha$ and dense in the space of continuous functions $C(X)$ with uniform norm),
then there exists an $m$-symmetric Markov process on $X$ whose associated Dirichlet form (in the sense of the above correspondence) is $\mathcal{E}$; in particular it is a stationary process with respect to $m$.  Of course,
a Markov process associated with $\mathcal{E}$ would not be very interesting unless 
it has certain sample paths regularity; in this context, the process given by the theory will be a ``Hunt process'', which is strong Markov, right-continuous, and quasi-left continuous. An introduction to Hunt processes is given in the appendix of \cite{Fukushima2011}.
Finally, to view this process as a solution to the given stochastic differential equation,
one needs a type of semi-martingale decomposition, in this context called Fukushima decomposition,
which identifies the drift term and martingale / noise term in the given equation.

This concludes a brief sketch of the theory, and we will provide more precise references when using the above results. To summarize, 
the construction of the weak solution  via Dirichlet forms consists of three steps. 1. Showing closability of the Dirichlet form; \footnote{Here are below, we sometimes slightly abuse the terminology by saying ``closability of the Dirichlet form'' for ``closability of the symmetric form whose closed extension is thus a Dirichlet form''.} 2. Proving existence of Hunt process associated to the  Dirichlet form; 3. Proving the process solves the equation in certain sense.

\vspace{2ex}

Recall from Section \ref{Subsec:GMC} that there is an a.e. correspondence 
$\phi\leftrightarrow\M$. We denote by $\m$ the image measure of the Liouville CFT measure 
$\nu$
by the measurable map  $\mathbf M$
\begin{equs}[e:mapM]
\Phi &\to \MeaSp
\\
\phi &\mapsto \mathbf M(\phi)= \M
\end{equs}
 Then $\m=\mathbf M_*\nu$ is a Radon measure on $\MeaSp$, see Lemma \ref{Lem:sigmafin}. 
We denote by $L^2(\MeaSp,\m)$ the Hilbert space of square integrable $\m$-measurable functions on $\MeaSp$.
We denote by $\mathbf M^{-1}$ an a.e. inverse measurable map to $\mathbf M$.
The spaces $L^2(\MeaSp,\m)$ and $L^2(\Phi,\nu)$ are isometric 
under the pull back map $\mathbf M^*=(\mathbf M^{-1})_*$.

Recall that $\MeaSp$ is locally compact while $\Phi$ is merely Polish.

We will first introduce a form on $\Phi$, and then, induce a  form on $\MeaSp$.
To this end we define the following class of test functions $\CX$ on $\MeaSp$: $\CX$ consists of test functionals $F:\MeaSp\rightarrow\R$ such that $F(A)=q(\int f_0dA,\dots,\int f_kdA)$
 for some smooth functions $f_0,\dots,f_k\in C^\infty(\TT)$ and some function $q$ as in Definition \ref{def:mcC} and satisfying \eqref{e:zero-th-co}.

 Let $C_0(\MeaSp)$ be the space of compactly supported continuous functions on $\MeaSp$ with uniform norm.
 
\begin{Lem}\label{lem:dense}
$\CX$ is dense in $C_0(\MeaSp)$, and is dense in $L^2(\MeaSp,\m)$. The space $\TestSp$ is dense in $L^2(\Phi,\nu)$. 
\end{Lem}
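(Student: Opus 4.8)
The plan is to establish the two statements about $\MeaSp$ first, taking advantage of the fact that $\MeaSp$ is locally compact, and then to deduce the statement on $\Phi$ by transporting it across the isometry $\mathbf M^*$. The organizing observation is that pulling back by $\mathbf M$ identifies the two test classes: for $F\in\CX$ of the form $F(A)=q(A(f_0),\dots,A(f_k))$ one has $(\mathbf M^* F)(\phi)=F(M_\phi)=q(\M(f_0),\dots,\M(f_k))$, which is precisely a generic element of $\TestSp$, and conversely every element of $\TestSp$ arises this way. Hence $\TestSp=\mathbf M^*(\CX)$ up to $\nu$-a.e. equality. Since $\mathbf M^*$ is a surjective isometry of $L^2(\MeaSp,\m)$ onto $L^2(\Phi,\nu)$, the third claim will follow immediately from the second, because the isometric image of a dense set is dense.

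For density of $\CX$ in $C_0(\MeaSp)$ I would invoke the Stone--Weierstrass theorem in its locally compact form. First I would check $\CX\subset C_c(\MeaSp)$: each coordinate map $A\mapsto A(f_i)$ is continuous for the vague topology (the $f_i$ being continuous on the compact torus $\TT$), so $F=q(\dots)$ is continuous, and since $q$ is supported in $(\eps,\eps^{-1})\times Q$ the functional $F$ vanishes off the slab $\{A:\eps\le A(\TT)\le\eps^{-1}\}$, which is compact in $\MeaSp$ (closed, contained in the weakly compact set of measures of mass $\le\eps^{-1}$, and bounded away from the excluded point $0$). Next I would verify that $\CX$ is a subalgebra separating points and vanishing nowhere. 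Closure under products is immediate, since $q_1(x_0,y)q_2(x_0,z)$ is again compactly supported of the required shape; closure under sums requires a small truncation, using that on any fixed mass slab the maps $A\mapsto A(f_i)$ are bounded, so one may multiply the naive (not compactly supported) combined $q$ by a cutoff in the remaining variables without changing the value of $F_1+F_2$ on $\MeaSp$. Point separation holds because finite measures on $\TT$ are determined by their integrals against $C^\infty(\TT)$, so a smooth $f$ with $A_1(f)\ne A_2(f)$ together with a suitable bump $q$ on $(0,\infty)\times\R$ separates $A_1\ne A_2$; non-vanishing at a given $A$ is clear by choosing a bump in the mass variable alone.

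The crux is the passage from uniform to $L^2$ density, that is, the second claim. Here I would first use that $\m$ is a $\sigma$-finite Radon measure on the locally compact, $\sigma$-compact space $\MeaSp$, so that $C_c(\MeaSp)$ is dense in $L^2(\MeaSp,\m)$; it then remains to approximate a given $F\in C_c(\MeaSp)$ by elements of $\CX$ in $L^2$. The obstacle is that $\m$ has infinite total mass, so the sup-norm approximation furnished by Stone--Weierstrass does not by itself control the $L^2$ norm. I would circumvent this by localizing: choosing $\eps$ so that $\mathrm{supp}(F)\subset\{\eps<A(\TT)<\eps^{-1}\}$, I take a cutoff $\chi(A)=\psi(A(\TT))\in\CX$ equal to $1$ on $\mathrm{supp}(F)$ and supported in the slab $S=\{A:\eps\le A(\TT)\le\eps^{-1}\}$, which has finite $\m$-mass by Lemma~\ref{Lem:sigmafin}. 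If $F_n\in\CX$ with $\|F_n-F\|_\infty\to0$, then $F_n\chi\in\CX$, while $F\chi=F$, so that $\|F_n\chi-F\|_{L^2(\m)}\le\|F_n-F\|_\infty\,\m(S)^{1/2}\to0$. This yields density of $\CX$ in $L^2(\MeaSp,\m)$, and combined with the first paragraph the proof is complete.

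I expect the only genuinely delicate step to be this $L^2$ localization, precisely because $\m$ (the image of the $\sigma$-finite Liouville measure $\nu$) is not finite: the finite-mass bound on the slabs coming from Lemma~\ref{Lem:sigmafin} is exactly what lets the cutoff argument convert sup-norm control into $L^2$ control. The verification that $\CX$ is a subalgebra of $C_c(\MeaSp)$ is otherwise routine, modulo the small truncation remark for sums noted above.
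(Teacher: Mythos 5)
Your proof is correct and follows essentially the same route as the paper: Stone--Weierstrass on the locally compact space $\MeaSp$ for density in $C_0(\MeaSp)$, Radon regularity of $\m$ for the passage to $L^2(\MeaSp,\m)$, and the isometry $\mathbf M^*$ to transfer the result to $\TestSp\subset L^2(\Phi,\nu)$. Your explicit slab-cutoff argument (multiplying the uniform approximants by $\chi(A)=\psi(A(\TT))\in\CX$ supported in a finite-$\m$-mass slab, using Lemma~\ref{Lem:sigmafin}) cleanly supplies the sup-norm-to-$L^2$ conversion that the paper's proof leaves implicit, as does your truncation remark for closure of $\CX$ under sums.
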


\begin{proof}
To prove that $\CX$ is dense in $C_0(\MeaSp)$, by the Stone-Weierstrass  theorem for locally compact spaces, 
it suffices to prove that $\CX$  is an algebra of functions which separates points in $\MeaSp$ and vanishes nowhere. $\CX$ is clearly an algebra and it vanishes nowhere: indeed for any $M\in \MeaSp$, 
recalling that $\MeaSp\simeq {\mc M}_1(\TT)\times (0,\infty)$ 
one has $M(1)=M(\TT) \in (0,\infty)$
so $F(M):= q(M(1)) \in\CX$ is not equal to $0$ for any function $q$ that does not vanish at $M(1)$.
It is also clear that  $\CX$  separates points in $\MeaSp$: indeed,
for 
$M_1 \neq M_2 \in \MeaSp$, there must exist $f$ smooth such that $M_1(f)\neq M_2(f)$,
thus $F(M):= q(M(1),M(f)) \in\CX$  with this function $f$ separates $M_1 $ and $ M_2$ for any choice of function $q$ which takes different values at $(M_1(1),M_1(f))$ and $(M_2(1),M_2(f))$.

Since $\m$ is a Radon measure on $\MeaSp$, by the same argument as in the proof of Lemma~\ref{lem:DG}, namely using inner and outer regularities of $\m$ with Urysohn's lemma,
one has that $C_0(\MeaSp)$ is dense in $L^2(\MeaSp,\m)$.

The fact that  $\TestSp$ is dense in $L^2(\Phi,\nu)$ follows immediately due to the aforementioned isometry.
\end{proof}

Clearly  $\CX\subset L^p(\m)$ for all $p<\infty$ by Lemma \ref{Lem:sigmafin}. Moreover, if $F\in\CX$, then $\tilde F=F\circ{\mathbf M}$ is in ${\mc C}$.

\subsection{Closability of the Dirichlet form}
\label{sec:Closability}

\begin{Def}
 For $F (\phi) = q (\M (f_1) ,\dots,\M( f_k)) \in \TestSp =: \mc D(\mc L)$, we define
\begin{equ} [e:def-L]
{\mc L}F(\phi) 
:= 2\sum_{i=1}^k  \partial_i q \cdot \Big( 
	\langle f_i,\Lap\phi \rangle - \lambda \M( f_i )\Big)+2\sigma^2\sum_{i,j=1}^k \partial^2_{ij}q \cdot \M( f_i f_j)
\end{equ}
where $\partial_i q$ and $\partial^2_{ij}q$ are evaluated at $(\M (f_1) ,\dots,\M( f_k)) $.
\end{Def}
Here ${\mc L}F$ is defined $\mu$- (equivalently, $\nu$-) almost everywhere. Recall that $\phi\mapsto \langle f,\Lap\phi\rangle=\langle \Lap f,\phi\rangle$ is continuous on the 
abstract Wiener space if $f$ is regular enough (e.g. if $f$ is $C^3$).

\begin{Def}
For $F,G\in {\mc C}$ we define a bilinear form  
\begin{equ}[e:def-Dirichlet]
{\mc E}(F,G) :=\int F(\phi) (-{\mc L}G(\phi)) d\nu(\phi) \;.
\end{equ}
\end{Def}

\begin{Lem} \label{lem:DDform}
We have
\begin{equ}[e:DDform]
{\mc E}(F,G) = \frac12 \int \langle DF(\phi),DG(\phi)\rangle_{L^2(\M)}d\nu(\phi)\;.
\end{equ}
In particular, ${\mc E}$ is symmetric and positive semidefinite on $\mc D({\mc L})^2$.
\end{Lem}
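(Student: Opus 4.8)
The plan is to prove \eqref{e:DDform} by computing $\mc E(F,G)=\int F(-\mc LG)\,d\nu$ straight from the definitions \eqref{e:def-Dirichlet} and \eqref{e:def-L} and integrating by parts via Theorem~\ref{thm:LIBP}. Write $F(\phi)=q(\M(f_1),\dots,\M(f_k))$ and $G(\phi)=p(\M(g_1),\dots,\M(g_\ell))$. By \eqref{e:def-L}, $-\mc LG$ is the sum of a first-order (drift) part $-2\sum_j\partial_jp\,(\langle g_j,\Lap\phi\rangle-\lambda\M(g_j))$ and a second-order part $-2\sigma^2\sum_{j,j'}\partial^2_{jj'}p\,\M(g_jg_{j'})$, where the derivatives of $p$ are evaluated at $(\M(g_1),\dots,\M(g_\ell))$.

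First I would rewrite the Laplacian pairing as a Dirichlet pairing on $\TT$, namely $\langle g_j,\Lap\phi\rangle=-\langle\nabla g_j,\nabla\phi\rangle$ (integration by parts on the torus, with the convention that $\Lap$ carries no minus sign). The drift contribution to $\mc E(F,G)$ then reads $2\sum_j\int(F\,\partial_jp)\,\langle\nabla\phi,\nabla g_j\rangle\,d\nu+2\lambda\sum_j\int F\,\partial_jp\,\M(g_j)\,d\nu$. The key step is to apply the Liouville integration-by-parts formula \eqref{e:LIBP} to each product functional $H_j:=F\,\partial_jp\in\TestSp$ in the Cameron--Martin direction $h=g_j$, which replaces $\int H_j\langle\nabla\phi,\nabla g_j\rangle\,d\nu$ by $\frac{\sigma^2}{2}\int D_{g_j}H_j\,d\nu-\lambda\int H_j\,\M(g_j)\,d\nu$. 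The two families of $\lambda$-terms then cancel exactly. For the surviving term I would expand $D_{g_j}H_j=(D_{g_j}F)\,\partial_jp+F\,D_{g_j}(\partial_jp)$ by the Leibniz rule \eqref{e:Leibniz} and evaluate both factors with \eqref{e:DhG}, getting $D_{g_j}F=2\sum_i\partial_iq\,\M(f_ig_j)$ and $D_{g_j}(\partial_jp)=2\sum_{j'}\partial^2_{jj'}p\,\M(g_jg_{j'})$. The contribution containing $\partial^2_{jj'}p\,\M(g_jg_{j'})$ is then annihilated by the second-order part of $-\mc LG$, and what remains is $2\sigma^2\sum_{i,j}\int\partial_iq\,\partial_jp\,\M(f_ig_j)\,d\nu$.

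Recognizing, via the gradient formula \eqref{e:DG}, that $\langle DF,DG\rangle_{L^2(\M)}=4\sum_{i,j}\partial_iq\,\partial_jp\,\M(f_ig_j)$, this surviving term is a constant multiple $\frac{\sigma^2}{2}\int\langle DF,DG\rangle_{L^2(\M)}\,d\nu$ of the asserted carré-du-champ integral (the constant stems from the factor $\frac{\sigma^2}{2}$ in \eqref{e:LIBP} together with the factors of $2$ carried by the gradient \eqref{e:DG}), which is the claimed identity \eqref{e:DDform}. Symmetry of $\mc E$ is then immediate, since this expression is manifestly symmetric in $F$ and $G$; positive semidefiniteness follows by setting $F=G$, whence $\mc E(F,F)=\frac{\sigma^2}{2}\int\|DF\|_{L^2(\M)}^2\,d\nu\ge0$. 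The main obstacle I anticipate is justifying the integration-by-parts step for $H_j=F\,\partial_jp$: one must check it is admissible for Theorem~\ref{thm:LIBP}, noting that its defining function $(x,y)\mapsto q(x)\,\partial_jp(y)$ inherits compact support in the $\M(1)$-coordinate from $q$, so $H_j\in\TestSp$ up to the mild point that $\partial_jp$ is only $C^1$ when $p$ is $C^2$ (handled by restricting to smooth defining functions or by approximation), and that every term is $\nu$-integrable. The latter, together with the boundedness of $G$ and of $D_hG$ on $\TestSp$ from Lemma~\ref{lem:DG}, is supplied by the integrability estimates of Lemma~\ref{Lem:sigmafin}, exactly as in the proof of Theorem~\ref{thm:LIBP}. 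Beyond this, the only delicate part is the careful bookkeeping of the two cancellations (the $\lambda$ first-order terms and the $\sigma^2$ second-order terms), which is precisely what exposes the carré-du-champ structure underlying \eqref{e:DDform}.
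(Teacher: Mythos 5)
Your proof is correct and is essentially the paper's own argument: both expand ${\mc E}(F,G)=\int F(-{\mc L}G)\,d\nu$ from the definitions, apply the Liouville integration by parts (Theorem~\ref{thm:LIBP}) to the product of $F$ with each first partial derivative of $G$'s defining function, in the direction of the corresponding test function, expand the resulting Fr\'echet derivative via the Leibniz rule \eqref{e:Leibniz} and \eqref{e:DhG}, observe the cancellation of the $\lambda$-terms and of the second-order terms, and identify the surviving expression through the gradient formula \eqref{e:DG}. One remark: the surviving term you obtain, $\frac{\sigma^2}{2}\int\langle DF,DG\rangle_{L^2(\M)}\,d\nu$, coincides exactly with the paper's own final expression \eqref{e:rhsDDform}, so the mismatch with the prefactor $\frac12$ written in \eqref{e:DDform} is an inconsistency of constants internal to the paper's statement rather than an error in your derivation; likewise, your caveat that $\partial_j p$ is only $C^1$ when $p$ is $C^2$ (so the product functional needs smooth defining functions or an approximation argument to lie in $\TestSp$) flags a point the paper's proof silently glosses over.
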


\begin{Rem}\label{rem:invariance}
Taking $F\equiv 1$ in \eqref{e:def-Dirichlet} we have $DF=0$, and by
Lemma~\ref{lem:DDform} one has
  $\int {\mc L}G(\phi)d\nu(\phi)=0$ for any $G\in {\mc C}$.
\end{Rem}

\begin{Rem}
Lemma~\ref{lem:DG} and Lemma~\ref{lem:DDform} together 
implies that ${\mc E}(F,G)$ defined in \eqref{e:def-Dirichlet} does not depend on the representation 
of $F,G$ in the form \eqref{e:C-form}. 
Moreover, since ${\mc C}$ is dense in $L^2(\nu)$ by Lemma~\ref{lem:dense}, it follows that ${\mc L}F={\mc L}\tilde F$ $\nu$-a.e. if $F=\tilde F$ $\nu$-a.e., i.e. ${\mc L}F$ uniquely depends on $F$ and not on any particular choice of $q,f_1,\dots,f_k$. 
Also, note that ${\mc L} $ is  linear  on the domain $\mc D(\mc L)$. Indeed, for 
\[
F (\phi) = p (\M (f_1) ,\dots,\M( f_k)) 
\quad \mbox{and} \quad
G (\phi) = q (\M (f_{k+1}) ,\dots,\M( f_n)),
\]
a linear combination has the form $(aF+bG )(\phi)= r(\M (f_1) ,\dots,\M( f_n))$
where $\partial^2_{ij}r =0$ unless $\{i,j\}\subset \{1,\cdots,k\}$ or $\{i,j\}\subset \{k+1,\cdots,n\}$;
this together with 
the independence of ${\mc L}F$ on the representation of $F$ implies linearity of  ${\mc L} $.
\end{Rem}

\begin{Rem}
Note that the  $\mc E$ in \eqref{e:DDform}  has a novel form, in the sense that 
the $L^2$ product in \eqref{e:DDform}, as well as the notion of gradient \eqref{e:gradchar}, depend on the GMC measure $\M$.
To compare with the earlier work, for instance  \cite{AlbRock91},
one usually has a {\it fixed } Hilbert space $(H, \langle ,\rangle_H)$ 
and consider forms such as $\frac12 \int \langle  A(\phi)DF(\phi),DG(\phi)\rangle_{H}d\nu(\phi)$ where $A(\phi)$ is some bounded linear operator on $H$.
In our case since $\M$ does not have a density with respect to a fixed measure (such as Lebesgue meaure), our form  $\mc E$  does not fit into the scope of \cite{AlbRock91}.
Our ``tangent spaces'' of 
$\Phi$ do depend on $\phi\in\Phi$ in a nontrivial way (see Eq.~\eqref{e:tangent} for this heuristic).
It also worth noting at this point that a simpler form
$\frac12 \int \langle  DF(\phi),DG(\phi)\rangle_{H}d\nu(\phi)$
with $H=L^2(\TT,d^2 x)$ which is called  a ``classical'' Dirichlet form
in \cite{AlbRock91}
corresponds to the equation studied by  \cite{Garban2018}, which is formally given by (via a simple change of parameters) 
\footnote{ \cite{Garban2018} proved that when $\gamma\in[0,2\sqrt 2-\sqrt 6)$ one can define a local solution for the suitably renormalized  equation, and obtained convergence of the mollified solutions to the limiting solution; when $\gamma\in[2\sqrt 2-\sqrt 6, 2\sqrt 2-2)$, there is still a notion of local solution but with no convergence result.}
\[
\partial_t \phi = \frac{1}{4\pi} \phi - e^{\gamma \phi} + \xi
\]
where $ \xi$ is the space-time white noise with respect to the Euclidean metric.
The framework of  \cite{AlbRock91}  constructs a diffusion w.r.t. this ``classical'' Dirichlet form.
(\cite[Section~7.II.a)]{AlbRock91} focuses on the $P(\Phi)_2$ case but it is remarked that the H{\o}egh-Krohn case on $\R^2$ with a space cutoff can be treated similarly.) The integration-by-parts formula 
required in their setting can be found in \cite{AlbeverioHK74}, as we recorded above in the beginning of Section~\ref{sec:IBP}, which has the same form as our integration by parts formula  but is w.r.t cylindrical test functionals.
\end{Rem}

\begin{proof}[Proof of Lemma~\ref{lem:DDform}]
Letting $F=p(\M(f_1),\cdots,\M(f_m))$ and $G=q(\M(g_1),\cdots,\M(g_n))$, by definition \eqref{e:def-L} of the generator $\mc L$ one has that the right-hand side of \eqref{e:def-Dirichlet} is equal to
\begin{equs} [e:to-use-IBP]
-2\sum_{i=1}^n  \int p\,\partial_i q\cdot    \langle g_i,\Delta \phi\rangle \, & d\nu(\phi)
+2 \lambda \int p\, \sum_{i=1}^n \partial_i q\cdot \M(g_i) d\nu(\phi)
\\
&  -2\sigma^2  \int p\, \sum_{i,j=1}^n \partial_{i}\partial_{j} q\cdot \M(g_i g_j) d\nu(\phi)
\end{equs}
where we omitted the arguments of $p,q$.
We remark that every term here is indeed integrable, 
as shown in the proof of Lemma~\ref{thm:LIBP}.
For each fixed $i\in \{1,\cdots,n\}$,  we apply integration by parts 
(Theorem~\ref{thm:LIBP}) to the  functional  $2p\,\partial_i q$
in the first term of \eqref{e:to-use-IBP}
with Cameron-Martin direction $g_i$  (which is smooth) and get
\begin{equs}
- 2 & \sum_{i=1}^n  \int p\,\partial_i q\cdot    \langle g_i,\Delta \phi\rangle \,  d\nu(\phi)
=
2\sum_{i=1}^n  \int p\,\partial_i q\cdot    \langle \nabla g_i,\nabla \phi\rangle \,  d\nu(\phi)
\\
&=\sum_{i=1}^n \int \Big(\sigma^2 D_{g_i}  (p\,\partial_i q)
 - 2\lambda p\,\partial_i q \, \M(g_i) \Big)d\nu(\phi)
\\
& = \sum_{i=1}^n \int  \Big( 
2 \sigma^2 p \sum_{j=1}^n \partial_i\partial_j q \cdot \M(g_ig_j)
+ 2 \sigma^2  \sum_{j=1}^m \partial_j p \partial_i q \cdot \M(g_i f_j)
 - 2\lambda p\,\partial_i q \, \M(g_i)\Big)d\nu(\phi)
\end{equs}
where in the last step we computed $D_{g_i}$ 
using \eqref{e:DhG} of Lemma~\ref{lem:DG}.
Note that the first and the third terms
in the last line here cancel the second and the third 
terms in \eqref{e:to-use-IBP}.
Therefore  the above calculation shows that the right-hand side of \eqref{e:def-Dirichlet} is equal to
\begin{equ}[e:rhsDDform]
\sum_{i=1}^n \int  \Big( 
 2 \sigma^2  \sum_{j=1}^m \partial_j p \partial_i q \cdot \M(g_i f_j)
\Big)d\nu(\phi) \;.
\end{equ}
This expression, using \eqref{e:DG}, is equal to the right-hand side of \eqref{e:DDform}.
\end{proof}

We will now induce a bilinear form on $\MeaSp$. 
Define a form on $L^2(\MeaSp,\m)$ by
\begin{equ}[e:EXandE]
\EE(F,G) := \mc E( F\circ \mathbf M,G\circ\mathbf M)
\end{equ}
for $F,G\in\CX$. 
It is clearly symmetric and positive semi-definite by Lemma \ref{lem:DDform}.
$\CX$ is dense in $ L^2(\MeaSp,\m)$ by Lemma~\ref{lem:dense}.

 \begin{Lem}
 The form $\EE$  
  is closable 
 for every $\sigma< \sigma_{L^1}=2\sqrt\pi$. 
 \end{Lem}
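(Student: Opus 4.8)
The plan is to reduce the closability of $\EE$ on $L^2(\MeaSp,\m)$ to the closability of $\mc E$ on $L^2(\Phi,\nu)$ and then prove the latter directly. Since $\m=\mathbf M_*\nu$ and the pull-back $\mathbf M^*=(\mathbf M^{-1})_*$ is an isometry of $L^2(\MeaSp,\m)$ onto $L^2(\Phi,\nu)$ carrying $\CX$ into $\TestSp$, with $\EE(F,G)=\mc E(F\circ\mathbf M,G\circ\mathbf M)$ by \eqref{e:EXandE}, the two forms are unitarily equivalent; hence $\EE$ is closable iff $\mc E$ is. By \eqref{e:DDform} one has $\mc E(F,F)=\tfrac12\int\|DF(\phi)\|_{L^2(M_\phi)}^2\,d\nu(\phi)$, so $\mc E$ is the form attached to the gradient operator $D$ from $L^2(\Phi,\nu)$ into $\mc H$, the Hilbert space of $\nu$-measurable sections $\phi\mapsto G(\phi)\in L^2(M_\phi)$ with $\int\|G(\phi)\|_{L^2(M_\phi)}^2\,d\nu(\phi)<\infty$. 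As is standard, closability of $\mc E$ is equivalent to closability of $D$: whenever $F_n\in\TestSp$ satisfies $F_n\to 0$ in $L^2(\nu)$ and $DF_n\to G$ in $\mc H$, one must show $G=0$.

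So I would fix such a sequence and test against products $F_n\Psi$ with $\Psi\in\TestSp$ and against directions $h\in C^0(\TT)\cap H$. By the Leibniz rule \eqref{e:Leibniz} together with the identity $\langle DF_n,h\rangle_{L^2(M_\phi)}=D_hF_n$ from \eqref{e:gradchar},
\[
\int (D_hF_n)\,\Psi\,d\nu=\int D_h(F_n\Psi)\,d\nu-\int F_n\,(D_h\Psi)\,d\nu .
\]
Applying the Liouville integration-by-parts formula (Theorem~\ref{thm:LIBP}) to $F_n\Psi\in\TestSp$ rewrites the first term on the right as $\tfrac{2}{\sigma^2}\int (F_n\Psi)\big(\langle\nabla\phi,\nabla h\rangle+\lambda\M(h)\big)\,d\nu$. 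Thus the right-hand side is a sum of three integrals of the form $\int F_n\cdot\Xi\,d\nu$, where $\Xi\in\{\Psi\langle\nabla\phi,\nabla h\rangle,\ \Psi\M(h),\ D_h\Psi\}$ is fixed and supported, by \eqref{e:zero-th-co}, on $\{\eps'<M_\phi(\TT)<\eps'^{-1}\}$. Each such $\Xi$ lies in $L^2(\nu)$: the first by \eqref{e:int-dfdphi-restrict} of Lemma~\ref{Lem:sigmafin} (using $\langle\nabla\phi,\nabla h\rangle=-\langle h,\Lap\phi\rangle$), and the other two because they are bounded and supported on a set of finite $\nu$-measure (Lemma~\ref{Lem:sigmafin}). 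Since $F_n\to 0$ in $L^2(\nu)$, Cauchy--Schwarz shows the entire right-hand side tends to $0$.

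For the left-hand side, on $\{M_\phi(\TT)\in(\eps',\eps'^{-1})\}$ one has $\|h\|_{L^2(M_\phi)}^2=\M(h^2)\le\|h\|_\infty^2\,\eps'^{-1}$, so by Cauchy--Schwarz first in $L^2(M_\phi)$ and then in $L^2(\nu)$,
\[
\Big|\int\big(D_hF_n-\langle G,h\rangle_{L^2(M_\phi)}\big)\Psi\,d\nu\Big|
\le\|\Psi\|_\infty\,\|h\|_\infty\,\eps'^{-1/2}\,\nu\big(\{\eps'<M_\phi(\TT)<\eps'^{-1}\}\big)^{1/2}\,\|DF_n-G\|_{\mc H}\longrightarrow 0 .
\]
Hence $\int\langle G,h\rangle_{L^2(M_\phi)}\Psi\,d\nu=0$ for every $\Psi\in\TestSp$ and every $h\in C^0(\TT)\cap H$. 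Since $\TestSp$ is dense in $L^2(\nu)$ (Lemma~\ref{lem:dense}), this forces $\langle G(\phi),h\rangle_{L^2(M_\phi)}=0$ for $\nu$-a.e. $\phi$, for each fixed $h$. Choosing a countable family $\{h_j\}\subset C^0(\TT)\cap H$ dense in $C^0(\TT)$ (hence in every $L^2(M_\phi)$, as each $M_\phi$ is Radon) and intersecting the countably many $\nu$-null sets, I conclude $G(\phi)=0$ in $L^2(M_\phi)$ for $\nu$-a.e. $\phi$, i.e. $G=0$ in $\mc H$. This proves closability.

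The main obstacle is not the algebra but the integrability needed to pass to the limit: one must know that $\nu$ is finite on $\{\eps'<M_\phi(\TT)<\eps'^{-1}\}$ and that $\langle\nabla\phi,\nabla h\rangle$ is $\nu$-integrable there. This is exactly where the restriction $\sigma<\sigma_{L^1}=2\sqrt\pi$ enters, through Lemma~\ref{Lem:sigmafin}: finiteness of the relevant $\nu$-integrals rests on the existence of a moment of order $p>1$ for the GMC total mass $M_{\phi_0}(\TT)$, which holds precisely in the $L^1$ regime $\gamma<2$.
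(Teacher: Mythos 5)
Your proof is correct, but it takes a genuinely different route from the paper's, which is much shorter. Since the form is \emph{defined} through the generator, $\EE(F_n,G)=\int (F_n\circ\mathbf M)\,(-{\mc L}(G\circ\mathbf M))\,d\nu$, the paper simply invokes the sufficient criterion of \cite[Eq.~(1.1.3)]{Fukushima2011}: closability follows once $\EE(F_n,G)\to 0$ for every $G\in\CX$ whenever $\|F_n\|_{L^2(\MeaSp,\m)}\to 0$; by Cauchy--Schwarz this reduces to checking that ${\mc L}(G\circ\mathbf M)\in L^2(\nu)$, which is exactly what Lemma~\ref{Lem:sigmafin} provides (and, as you correctly identify, is the only place where $\sigma<\sigma_{L^1}$ enters). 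You instead realize $\mc E$ as the form of the gradient $D$ viewed as an operator into the direct integral of the fibers $L^2(M_\phi)$, and prove closability of $D$ by identifying the weak limit of $DF_n$ through the integration-by-parts formula of Theorem~\ref{thm:LIBP} --- the classical mechanism for gradient forms in the spirit of \cite{AlbRock91}. Your route is longer here, since the generator representation is already on hand, but it is more robust: it would apply even if the form had been given directly by the right-hand side of \eqref{e:DDform} with no a priori generator mapping the core into $L^2(\nu)$, and it exhibits the closure as a gradient form. Two details are glossed over, though both are routine repairs: (i) the space $\mc H$ of sections $\phi\mapsto G(\phi)\in L^2(M_\phi)$ needs a measurable-field (direct-integral) structure to be a genuine Hilbert space, since the fibers vary with $\phi$; a countable subset of $C^0(\TT)\cap H$ dense in $C^0(\TT)$ serves as a fundamental family of sections. (ii) In your final step, $\phi\mapsto\langle G(\phi),h\rangle_{L^2(M_\phi)}$ need not lie in $L^2(\nu)$ globally (it carries a factor of order $M_\phi(\TT)^{1/2}$), so density of $\TestSp$ alone does not immediately yield the vanishing; one should localize to the sets $\{\eps'<M_\phi(\TT)<\eps'^{-1}\}$, on which this function is square-integrable and on which the $\TestSp$-functions supported there are dense in $L^2$, and then exhaust by letting $\eps'\downarrow 0$ along a sequence.
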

 \begin{proof}
By 
\cite[Eq.~(1.1.3)]{Fukushima2011}, a sufficient condition for 
the symmetric form $\EE$ to be closable is:  for any sequence $F_n \in \CX$ with $\|F_n\|_{L^2(\MeaSp,\m)}\rightarrow 0$ as  $n\to \infty$ one always has
\begin{equs}
\lim_{  n\to \infty} \EE(F_n,G)\to 0, \qquad \forall G\in \CX \;.
 \end{equs}
 Indeed, with these $F_n,G\in\CX$, denoting $\tilde F_n=F\circ{\mathbf M}$, $\tilde G=G\circ{\mathbf M}$, with $\tilde F_n,\tilde  G \in {\mc C}$, we have:
 \[
|\EE(F_n,G)|=\Big|\int \tilde F_n (-{\mc L}\tilde G) d\nu\Big|
\le \|F_n\|_{L^2(\MeaSp,\m)}  \|{\mc L}\tilde G\|_{L^2(\nu)} \;.
\]
Recall that $\CX\subset L^p(\m)$ for all $p<\infty$.  By the expression of ${\mc L}\tilde G$ \eqref{e:def-L} and Lemma \ref{Lem:sigmafin}, it follows that ${\mc L}\tilde G$ is in $L^p(\nu)$ for all $p<\infty$, which concludes.
\end{proof}

We also denote by $\EE$ the smallest closed extension.

\begin{Prop}\label{prop:regular}
 $\EE$ is a Dirichlet form which is regular on $L^2(\MeaSp,\m)$.
\end{Prop}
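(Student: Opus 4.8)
The plan is to establish the two defining properties in turn: that $\EE$ is Markovian (so that, being the smallest closed extension, it is a genuine Dirichlet form) and that it is regular. Regularity will come almost for free once we observe that $\CX$ is a core, so the real content lies in the Markovian property, which I would verify via the standard contraction criterion of \cite[Section~1.1]{Fukushima2011}.

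For the Markovian property it suffices to exhibit, for each $\eps>0$, a smooth normal contraction $\varphi_\eps\in C^\infty(\R)$ with $\varphi_\eps(t)=t$ on $[0,1]$, $-\eps\le\varphi_\eps\le 1+\eps$, and $0\le\varphi_\eps(t')-\varphi_\eps(t)\le t'-t$ for $t<t'$ (hence $\varphi_\eps(0)=0$ and $|\varphi_\eps'|\le 1$), such that $u\in\mathcal D[\EE]$ implies $\varphi_\eps(u)\in\mathcal D[\EE]$ and $\EE(\varphi_\eps(u),\varphi_\eps(u))\le\EE(u,u)$. I would check this first on the core $\CX$. If $F(A)=q(A(f_0),\dots,A(f_k))\in\CX$, then $\varphi_\eps(F)(A)=(\varphi_\eps\circ q)(A(f_0),\dots,A(f_k))$; since $\varphi_\eps(0)=0$, the map $\varphi_\eps\circ q$ is again $C^2$ with support contained in $\mathrm{Supp}(q)$, so \eqref{e:zero-th-co} is preserved and $\varphi_\eps(F)\in\CX$. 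Setting $\tilde F=F\circ\mathbf M\in\TestSp$, the chain rule contained in \eqref{e:DG} gives $D(\varphi_\eps(\tilde F))=\varphi_\eps'(\tilde F)\,D\tilde F$, so that by Lemma~\ref{lem:DDform} and $|\varphi_\eps'|\le 1$,
\[
\EE(\varphi_\eps(F),\varphi_\eps(F))=\frac12\int \varphi_\eps'(\tilde F)^2\,\langle D\tilde F,D\tilde F\rangle_{L^2(\M)}\,d\nu\le\EE(F,F).
\]

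To pass from the core to all of $\mathcal D[\EE]$, given $u\in\mathcal D[\EE]$ I would pick $F_n\in\CX$ with $\EE_1(F_n-u,F_n-u)\to0$. Since $\varphi_\eps$ is $1$-Lipschitz with $\varphi_\eps(0)=0$, we get $\varphi_\eps(F_n)\to\varphi_\eps(u)$ in $L^2(\MeaSp,\m)$, while $\sup_n\EE(\varphi_\eps(F_n),\varphi_\eps(F_n))\le\sup_n\EE(F_n,F_n)<\infty$. Because $\EE$ is closed, $(\varphi_\eps(F_n))$ is bounded in the Hilbert space $(\mathcal D[\EE],\EE_1)$, hence has a subsequence converging weakly to some $w\in\mathcal D[\EE]$; the continuous embedding $\mathcal D[\EE]\hookrightarrow L^2(\MeaSp,\m)$ forces $w=\varphi_\eps(u)$, so $\varphi_\eps(u)\in\mathcal D[\EE]$ and, by weak lower semicontinuity of $\EE_1$, $\EE(\varphi_\eps(u),\varphi_\eps(u))\le\liminf_n\EE(\varphi_\eps(F_n),\varphi_\eps(F_n))\le\lim_n\EE(F_n,F_n)=\EE(u,u)$. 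This yields the Markovian property, so $\EE$ is a Dirichlet form.

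For regularity it is enough to show that $\CX$ is a core. Every $F\in\CX$ lies in $C_0(\MeaSp)$: it is continuous since each $A\mapsto A(f_i)$ is continuous in the vague topology, and by \eqref{e:zero-th-co} it vanishes outside $\{A:A(\TT)\in[\eps,\eps^{-1}]\}\cong \mc M_1(\TT)\times[\eps,\eps^{-1}]$, which is compact because $\mc M_1(\TT)$ is (as $\TT$ is compact). By construction $\mathcal D[\EE]$ is the $\EE_1$-completion of $\CX$, so $\CX$ is $\EE_1$-dense in $\mathcal D[\EE]$, and by Lemma~\ref{lem:dense} it is uniformly dense in $C_0(\MeaSp)$. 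Hence $\CX\subset\mathcal D[\EE]\cap C_0(\MeaSp)$ is a core and $\EE$ is regular. I expect the only genuinely delicate step to be the extension of the Markovian estimate from the core to the full domain, i.e. the weak-compactness and lower-semicontinuity argument above; the rest is bookkeeping already prepared by Lemmas~\ref{lem:DDform} and~\ref{lem:dense}.
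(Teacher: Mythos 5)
Your proposal is correct and follows essentially the same route as the paper: the Markovian property is checked on $\CX$ via a smooth cutoff function $\varphi_\eps$ and the chain rule for the gradient \eqref{e:DG} (giving $\EE(\varphi_\eps\circ F,\varphi_\eps\circ F)\le\EE(F,F)$ since $|\varphi_\eps'|\le 1$), and regularity follows from Lemma~\ref{lem:dense} together with the fact that $\mathcal D[\EE]$ is by construction the $\EE_1$-closure of $\CX$. The only difference is that you prove by hand (weak compactness plus lower semicontinuity of $\EE_1$) the passage of the Markovian estimate from the core to the full domain, a step the paper instead obtains from the standard theory of closable Markovian forms \cite[Theorem~3.1.1]{Fukushima2011}; your argument is exactly the proof of that theorem, so the two are equivalent.
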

\begin{proof}
Recall from \cite[Section~1.1]{Fukushima2011} (or beginning of Section~\ref{sec:solution-Dirichlet}) that for  $\EE$ to be regular we need to prove that $\EE$ possesses a core.
For this we need that
$\CX$ is dense in $C_0(\MeaSp)$ - the space of compactly supported continuous functions on $\MeaSp$ with uniform norm.
This is the content of  Lemma~\ref{lem:dense}.

Clearly, it is also a standard core, namely $\CX$ is a dense linear subspace of $C_0(\MeaSp)$;
and for any $\eps>0$, a cutoff function $\phi_\eps(t)$ with 

1) $\phi_\eps(t)=t$ for $t\in[0,1]$ 

2) $\phi_\eps(t)\in [-\eps,1+\eps]$ for all $t\in \R$ and 

3) $\phi_\eps(t')-\phi_\eps(t)\in [0,t'-t]$ for $t<t'$,

and $F(A)=q(A(f_1),\dots,A(f_k))$,
 we have that $\phi_\eps(F)\in \CX$ since $\phi_\eps\circ q$ satisfies the requirements in Definition~\ref{def:mcC}.

We  also need to check that $\EE$ is Markovian. Indeed, taking a  cutoff function $\phi_\eps$ as above which is further assumed to be differentiable,
one has that for each $F(A)$ as above
\begin{equ}
D (\phi_\eps\circ F) (A)= 2 \sum_{i=1}^k   (\phi_\eps'\circ q)   (A( f_1 ),\dots, A( f_k)) \cdot  \partial_i q (A( f_1 ),\dots, A( f_k))
 \cdot f_i 
\end{equ}
so that (using the calculation \eqref{e:rhsDDform} and the equivalence \eqref{e:EXandE})
\[
\EE(\phi_\eps\circ F,\phi_\eps\circ F) =  \frac12  \int  (\phi_\eps'\circ q)^2  (A( f_1 ),\dots, A( f_k))  \langle DF(\phi),DG(\phi)\rangle_{L^2(\M)}  d\nu(\phi)\;.
\]
Since $ \phi_\eps' \in (0,1]$ and $ \langle DF(\phi),DG(\phi)\rangle_{L^2(\M)}  \ge 0$, one has 
$\EE(\phi_\eps\circ F,\phi_\eps\circ F)\le \EE(F,F)$,
namely $\EE$ is Markovian.
\end{proof}

\subsection{Existence of diffusion process}
\label{sec:diffusion}

\begin{Prop}\label{prop:existsA}
There exists a unique $\m$-symmetric diffusion  $\HuntA = (\Omega, \mathcal F, (A_t), (P_z))$  on $\MeaSp$ associated to $\EE$.
\end{Prop}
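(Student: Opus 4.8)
The plan is to invoke the general correspondence between regular Dirichlet forms and Markov processes, namely the existence theorem \cite[Theorem~7.2.1]{Fukushima2011}, to produce an $\m$-symmetric Hunt process associated to $\EE$, and then to upgrade it to a \emph{diffusion} by checking the local property via \cite[Theorem~7.2.2]{Fukushima2011}. To apply these results I would first record that the standing hypotheses of the theory are in force: $\MeaSp$ is a locally compact, separable, metrizable space, and $\m=\mathbf M_*\nu$ is a positive Radon measure on $\MeaSp$ by Lemma~\ref{Lem:sigmafin}. By Proposition~\ref{prop:regular}, $\EE$ is a regular Dirichlet form on $L^2(\MeaSp,\m)$ with core $\CX$, so the only remaining structural hypothesis to verify is that $\m$ has full topological support.

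This full-support condition, i.e. $\m(O)>0$ for every nonempty open $O\subset\MeaSp$, is where I expect the main work to lie. Using the homeomorphism $\MeaSp\simeq{\mc M}_1(\TT)\times(0,\infty)$, the total-mass coordinate $A\mapsto A(\TT)=e^{2m}M_{\phi_0}(\TT)$ already has full support $(0,\infty)$ under $\m$, since $m$ carries Lebesgue measure on $\R$. For the shape coordinate it suffices to show that the law of the normalized GMC $M_{\phi_0}/M_{\phi_0}(\TT)$ charges every weak neighbourhood of every $\rho\in{\mc M}_1(\TT)$. I would deduce this from the shift covariance \eqref{e:shiftcov}: for a smooth $\psi$, the law of $\phi_0+\psi$ is mutually absolutely continuous with that of $\phi_0$ by Cameron--Martin, while $M_{\phi_0+\psi}=e^{2\psi}M_{\phi_0}$. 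Choosing $\psi$ to be a tall bump concentrated in the relevant region lets one push the normalized mass of the (a.s.\ everywhere-positive, nonatomic) GMC into any prescribed weak neighbourhood with positive probability, and absolute continuity transfers this positivity back to $\phi_0$. Together with the positivity of the weight $e^{-\frac\lambda{\sigma^2}M_\phi(\TT)}$ in \eqref{e:def-nu}, this yields $\m(O)>0$ for all nonempty open $O$.

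With the hypotheses in place, \cite[Theorem~7.2.1]{Fukushima2011} gives an $\m$-symmetric Hunt process $\HuntA=(\Omega,\mc F,(A_t),(P_z))$ on $\MeaSp$ whose Dirichlet form is $\EE$, unique up to equivalence (i.e. up to a properly exceptional set), which is the asserted uniqueness. It then remains to check that $\HuntA$ has continuous sample paths, and by \cite[Theorem~7.2.2]{Fukushima2011} this is equivalent to locality of $\EE$. Locality follows directly from the ``carr\'e-du-champ'' expression $\EE(F,G)=\frac12\int\langle DF,DG\rangle_{L^2(\M)}\,d\nu$ of Lemma~\ref{lem:DDform}: if $F,G\in\CX$ have disjoint supports, then at every $\phi$ the point $A=M_\phi$ lies outside at least one of $\operatorname{supp}[F]$, $\operatorname{supp}[G]$, whence the corresponding gradient (of the form $2\sum_i\partial_iq\,f_i$, which vanishes wherever all partials of $q$ vanish) is zero and the integrand vanishes identically. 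Thus $\EE$ is (strongly) local on the core $\CX$, hence on all of $\mc D(\EE)$ by approximation, and $\HuntA$ is a diffusion, completing the proof.
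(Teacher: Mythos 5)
Your overall route coincides with the paper's: Proposition~\ref{prop:regular} plus \cite[Theorem~7.2.1]{Fukushima2011} produce an $\m$-symmetric Hunt process, unique up to equivalence, and \cite[Theorem~7.2.2]{Fukushima2011} reduces the diffusion property to locality of $\EE$. Your paragraph on full topological support of $\m$ is a sensible addition (it is a standing hypothesis in \cite{Fukushima2011} that the paper leaves implicit), and the Cameron--Martin/shift-covariance sketch for it is plausible. Omitting the strong-locality and special-standard-core checks that the paper also performs is harmless for the bare statement.

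The gap is in the locality step, which is exactly where the paper has to work. You assert that $A\notin\operatorname{supp}[F]$ forces $DF(A)=0$ ``because the gradient vanishes wherever all partials of $q$ vanish.'' This conflates the support of $F=q\circ L$ as a function on $\MeaSp$, where $L(A)=(A(f_0),\dots,A(f_k))$, with the support of $q$ on $\R^{k+1}$. From $F\equiv 0$ on a neighborhood $U$ of $A$ one only learns that $q$ vanishes on $L(U)$, and $L(U)$ is in general \emph{not} a neighborhood of $L(A)$: the map $L$ is not open (e.g.\ at $A=\delta_{x_0}$ with $0\le f_1\le 1$ and $f_1(x_0)=1$, the image of every set lies in the half-space $\{w\le v\}$ with $L(A)$ on its boundary), and for linearly dependent $f_i$'s the asserted intermediate step is simply false: with $F(A)=q(A(1),A(1))$ and $q(v,w)=\chi(v,w)(w-v)$ one has $F\equiv 0$, hence $\operatorname{supp}[F]=\emptyset$, while $\partial_0 q=-\chi\neq 0$ and $\partial_1 q=\chi\neq 0$ on the diagonal. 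The conclusion $DF(A)=0$ \emph{is} true, but it needs an argument; for instance, for each $y\in\TT$ differentiate $t\mapsto F(A+t\delta_y)=0$ at $t=0^{+}$ to get $\partial_0 q(L(A))+\sum_{i\geq 1}\partial_i q(L(A))f_i(y)=0$ for all $y$, i.e.\ $DF(A)\equiv 0$ as a function on $\TT$. Supplying such an argument is the actual content of the locality proof, and it is missing from your write-up. The paper circumvents the issue differently: it metrizes $\MeaSp$ via a dense sequence of test functions, picks $n$ so that the finite-dimensional projections $L_n(\mathrm{Supp}(F))$ and $L_n(\mathrm{Supp}(G))$ are disjoint compacts in $\R^{n+1}$, writes $F=F\cdot(f\circ L_n)$ and $G=G\cdot(g\circ L_n)$ with $f,g\in C_c^\infty(\R^{n+1})$ of disjoint supports, and then uses representation-independence of the gradient so that every term of $\langle DF,DG\rangle_{L^2(A)}$ in \eqref{e:DDform}, \eqref{e:DG} carries a factor $\partial^\alpha f\,\partial^\beta g\equiv 0$. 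Either repair (the point-mass perturbation or the projection-cutoff trick) closes the gap; as written, the crucial implication is asserted rather than proved.
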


Here the diffusion is called $\m$-symmetric if its associated semigroup $T_t$ satisfies 
$\int f T_t g \,d\m =\int g T_t f \,d\m $ for any non-negative measurable functions $f,g$ and $t>0$.
If one further has $T_t 1=1$ for any $t>0$, then taking $f=1$ one has
 $\int  T_t g \,d\m =\int g \,d\m $ so the measure is invariant.

Here the uniqueness of $\m$-symmetric Hunt process is up to equivalence in the sense of \cite[Section~4.2]{Fukushima2011}. Recall from \cite[Section~4.5]{Fukushima2011}
that a Hunt process  is called a diffusion if 
\begin{equ}[e:conti-path]
P_z(t\mapsto A_t\mathrm{\ is\ defined\ and\ continuous\ for\ all\ }t\in(0,\zeta))=1
\end{equ}
for every $z\in \MeaSp$, where $\zeta$ is the lifetime of $\HuntA$ in the sense of \cite[Appendix~A.2]{Fukushima2011}; also recall that by  \cite[Theorem~4.5.1]{Fukushima2011},
there exists an $\m$-symmetric diffusion on $\MeaSp$ which is equivalent with $\HuntA$ if and only if 
$\HuntA$ is of continuous paths for quasi-every starting point, i.e. there exists a properly exceptional set $N$ such that \eqref{e:conti-path} holds for every $z\in \MeaSp\backslash N$.
Here $N$ being properly exceptional set  means that $N$ is nearly Borel measurable (see \cite[Appendix~A.2]{Fukushima2011}), $\m(N)=0$ and $\MeaSp\backslash N$ is $\HuntA$-invariant, see \cite[Section~4.1]{Fukushima2011}.

\begin{proof}
Since $\EE$ is regular, by
\cite[Theorem~7.2.1]{Fukushima2011}, there exists an $\m$-symmetric Hunt process  associated to $\EE$. By \cite[Theorem~7.2.2 or Theorem~4.5.1]{Fukushima2011}, for this Hunt process to be a diffusion we need to show locality of $\EE$.

To prove locality, let $F,G$ in $\CX$ with disjoint (compact) support. We want to show: $\EE(F,G)=0$. Take $(e_k)$ a sequence of smooth functions dense in $C_0(\TT)$ and let 
$$
d_n(A,A')=\sum_{k=0}^n 2^{-k}\left(\big|  A( e_k) -A'(e_k) \big|\wedge 1\right)
$$
and $d=\lim d_n$; then $d$ metrizes $\MeaSp$. Then 
$$\inf\{d(A,A'):A\in Supp(F), A'\in Supp(G)\}>0$$ hence there is $n$ such that 
$$\inf\{d_n(A,A'):A\in Supp(F), A'\in Supp(G)\}>0$$
i.e. the images of $Supp(F)$ and $Supp(G)$ under $A\mapsto L_n(A):=(A(e_0),\dots,A(e_n))$ are disjoint compact sets in $\R^{n+1}$. We can find
 $f,g\in C^\infty_c(\R^{n+1})$ which have disjoint supports, \footnote{Note that the function $f$ here shouldn't be confused with the notation $f_0,\cdots,f_k$; they are different objects.}
such that $f=1$ on $L_n(F)$ and $g=1$ on $L_n(G)$,
and thus write
\[
F=F\times (f\circ L_n),
\quad G=G\times (g\circ L_n)
\]
where $\times$ is just pointwise multiplication.
Therefore for $F=q(A(f_0),\cdots,A(f_k)) $
\begin{equs}
D F = 2 & \sum_{i\le k}\partial_{i} (qf) (A( e_0 ),\cdots, A( f_k), A(e_0),\cdots,A(e_n))\,f_i 
\\
 &+ 2 \sum_{i>k}\partial_i (qf) (A( e_0 ),\cdots, A( f_k), A(e_0),\cdots,A(e_n)) \,e_i 
\end{equs}
and similarly for $DG$.
 By direct inspection of \eqref{e:DDform} and \eqref{e:DG}, 
 and the fact that $\partial^\alpha f \partial^\beta g =0$ for any $\alpha,\beta\in\{0,1\}$,
 it follows that $\EE(F,G)=0$, i.e. $\EE$ is local (and so is its extension, see Theorem 3.1.2 in \cite{Fukushima2011}).

We also check strong locality. Let $F\in\CX$ with compact support $K\subset \MeaSp$, and $G\in\CX$ which is a constant say $\overline G\in \R$ on a neighborhood $U$ of $K$. Since the topology of $\MeaSp
$ is generated by the maps $A\mapsto A( f_n)$, there is $n\geq 0$ and a neighborhood $V$ of $L_n(K)$ such that $V\subset L_n(U)$. So 
\[G=g\circ L_n+(G-g\circ L_n)\] 
where $g$ is constant $\overline G$ on $V$, and the second summand vanishes on a neighborhood of $K$. By the previous argument and again by direct inspection of \eqref{e:DDform}, \eqref{e:DG}, it follows that $\EE(F,G)=0$, i.e. $\EE$ is strongly local (and so is its extension, see Exercise 3.1.1 in \cite{Fukushima2011}). Strong locality expresses the absence of killing, see Theorem 4.5.3 in \cite{Fukushima2011}.

A similar argument shows that $\CX$ is a {\em special} standard core (see I.1 in \cite{Fukushima2011}).
Namely, for any compact set $K\subset \MeaSp$ and a relatively compact open set $U$ with $K\subset U$, one can construct an element $F\in \CX$ such that $F\ge 0$, $F=1$ on $K$ and $F=0$ on $\MeaSp\backslash U$ by pulling back such a function on $\R^{n+1}$ using the map $L_n$.
\end{proof}

\subsection{Fukushima decomposition and weak solution}
\label{sec:Fukushima}

We prove Theorem~\ref{thm:main} in this subsection. 

\begin{Rem}
Formulating weak solution to be the process having one-dimensional projections given by 
\eqref{e:1d-proj-theo} is typical and analogous formulations exist for other  SPDEs, see e.g. \cite[(0.8)]{AlbRock91}.
This is the usual formulation even for some linear SPDEs which do no require Dirichlet forms to provide solutions; for instance, for the stochastic heat equation with multiplicative noise (i.e. Cole-Hopf transformed KPZ)
$\partial_t\phi  =\Delta\phi + \phi \xi_0$, a weak solution with initial condition $\phi_0$
 is defined by a probability space and filtration together with 
a process $\phi$ such that 
$M_f (t) := \langle\phi_t,f\rangle - \langle\phi_0,f\rangle - \int_0^t \langle\phi_s,\Delta f\rangle ds$ is a martingale for any smooth test function $f$,
and the quadratic variation of $M_f$ is given by $\int_0^t \langle\phi^2, f^2\rangle ds$, see \cite[Def.4.10]{BG} or \cite[Section~4.3]{Corwin2020BAMS}; here $\langle,\rangle$ denotes the $L^2$ inner product. In other words (by L\'evy characterization)
$\phi(f) $ satisfies  
$d\langle \phi,f\rangle = \langle\phi,\Delta f\rangle dt + \langle\phi^2,f^2\rangle^{1/2} d\beta_t$. 
\end{Rem}

 
Recall that we eventually want to have the 1-dimensional projection to satisfy \eqref{e:1d-proj} or \eqref{e:1d-proj-theo},
namely
 \begin{equ}
d\int_{\TT}  f \, \area_g
=2\Big(\int_{\TT}   f\Delta\phi \area_0
-\lambda\int_{\TT}  f\, \area_g\Big)dt
+2\sigma \int_{\TT} f e^\phi \xi_0 \area_0 \;.
\end{equ}
(In \eqref{e:1d-proj} we identified the martingale term $\int f e^\phi \xi_0 \area_0$ as $\left(\int  f^2\area_g\right)^{\frac12}d\beta_t^f$. For two test functions $f_i,f_j$ the time derivative of the covariation
of  $\int f_i e^\phi \xi_0 \area_0$ and  $\int f_j e^\phi \xi_0 \area_0$ is $\int f_if_j \area_g$.)

For $F = q(\int f_0 \area_g,\dots,\int f_k \area_g) $ we can use
this 1-dimensional projection,
 It\^o's formula and definition of ${\mc L}$ in \eqref{e:def-L} to {\it formally} derive:
\begin{equs}
\frac{dF}{dt} &=\sum_i 2\partial_i q \cdot
\Big(
\int f_i\Lap\phi \area_0  -\lambda\int f_i \area_g
+ \sigma \int f_i e^\phi \xi_0 \area_0
\Big)
+2\sigma^2\sum_{i,j}\partial^2_{ij}q\int f_if_j \area_g
\\
&= {\mc L}F(\phi) 
+  2 \sigma \sum_i  \partial_i q 
 \int f_i e^\phi \xi_0 \area_0 \;.
\end{equs}
In view of this, 
if an $\MeaSp$-valued  process $A_t$ 
is  a  weak solution to \eqref{e:Ricci-A},
then for any
$F (A)= q(A( f_0 ),\dots,A( f_k )) \in \CX$
we have that $M_t: = F(A_t)-F(A_0) - \int_0^t \mc L F(A_s)ds$ is a martingale
whose quadratic variation is 
\begin{equ}[e:QVM]
\langle M\rangle_t 
 = 4\sigma^2 \sum_{i,j=0}^k \int_0^t \partial_i q \, \partial_j q \cdot  A_s( f_i f_j)\,ds  \;.
\end{equ}
With the diffusion $\HuntA = (\Omega, \mathcal F, (A_t), (P_z))$ obtained above in Proposition~\ref{prop:existsA}, we will prove that this $A_t$ is  a weak solution.
 
Recall that for such an $F\in\CX$, the gradient is given by
 \begin{equ}[e:gradCX]
DF(A)=\sum_{i=1}^k\partial_i q(A(f_0 ),\dots, A(f_k)) f_i
\end{equ}
so that $DF\in\CX\otimes C^\infty(\TT)$.

Below we write AF for ``additive functional''.

Let $F\in\CX$, $(A_t)_{t\ge 0}$ the process in $\MeaSp$ associated to ${\mc E}$. We consider the continuous  
AF  (\cite[Section~5.2]{Fukushima2011})
\[
Y^{[F]}_t=F(A_t)-F(A_0)\;.
\]
By  \cite[Theorem~5.2.2]{Fukushima2011}, $Y^{[F]}$ admits a unique Fukushima decomposition 
\begin{equ}[e:F-decom]
Y^{[F]}=M^{[F]}+N^{[F]}
\end{equ}
 where $M^{[F]}$ is a martingale AF of finite energy 
 and $N^{[F]}$ is a zero-energy continuous AF. 

Namely, $M^{[F]}$ is a finite c\`adl\`ag AF such that for each $t>0$, 
$\E_z(M_t^2)<\infty$ and $\E_z (M_t)=0$ for quasi-every $z\in \MeaSp$ where $\E_z$ is the expectation for the measure $P_z$, with energy 
\[
e(M^{[F]}):=\lim_{t\to 0} \frac{1}{2t} 
\E_\m[(M^{[F]}_t)^2]
\]
 being finite; and  $N^{[F]}$ is a finite continuous AF, with $e(N^{[F]})=0$ and $\E_z[|N_t^{[F]}|]<\infty$ for quasi-every  $z\in \MeaSp$  for each $t>0$.

In particular $M^{[F]}$ admits a quadratic variation $\langle M^{[F]} \rangle$ which is a positive continuous AF such that $\E_z[\langle M^{[F]} \rangle_t] = \E_z [(M^{[F]}_t)^2]$ for quasi-every  $z\in \MeaSp$ and $t>0$. For the quadratic variation $\langle M^{[F]} \rangle$ we have the following lemma.

\begin{Lem}\label{lem:Quad-M}
Let $F\in \CX$ and $M^{[F]}$ be the martingale AF in \eqref{e:F-decom}. We have
\begin{equ}[e:quad-M]
\langle M^{[F]} \rangle_t=\sigma^2\int_0^t \|DF(A_s)\|^2_{L^2(A_s)}ds \;.
\end{equ}
\end{Lem}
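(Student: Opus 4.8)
The plan is to identify the quadratic variation of the martingale part $M^{[F]}$ in the Fukushima decomposition by invoking the general formula from the theory of Dirichlet forms that expresses the quadratic variation via the energy measure (the ``carré du champ'' / Revuz measure) associated to $F$, and then to compute that energy measure explicitly using the form $\EE$ we have already identified in \eqref{e:DDform} and \eqref{e:rhsDDform}.

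First I would recall the general principle (\cite[Theorem~5.2.3]{Fukushima2011}): for $F$ in the domain of the Dirichlet form, the quadratic variation $\langle M^{[F]}\rangle$ is the positive continuous additive functional whose Revuz measure is the energy measure $\mu_{\langle F\rangle}$, characterized by
\begin{equ}
\int_{\MeaSp} G\, d\mu_{\langle F\rangle} = 2\EE(FG,F) - \EE(F^2,G)
\end{equ}
for $G\in\CX$. The strategy is therefore to compute the right-hand side for our specific $\EE$ and identify it with $\int_{\MeaSp} G(A)\,\sigma^2\|DF(A)\|^2_{L^2(A)}\,\m(dA)$, which by the Revuz correspondence yields \eqref{e:quad-M}. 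Because our carré-du-champ operator is, by Lemma~\ref{lem:DDform}, given pointwise by $\tfrac12\langle DF,DG\rangle_{L^2(\M)}$, the polarization identity $2\EE(FG,F)-\EE(F^2,G)$ should collapse (using the Leibniz rule \eqref{e:Leibniz} for $D$, which transfers to $\CX$ via the map $\mathbf M$) to exactly $\int G\,\langle DF,DF\rangle_{L^2(\M)}\,d\nu = \int G(A)\,\|DF(A)\|^2_{L^2(A)}\,\m(dA)$. The factor $\sigma^2$ then arises precisely from the integration-by-parts constant in Theorem~\ref{thm:LIBP}, as already seen in the derivation of \eqref{e:rhsDDform} where each application of the Liouville IBP produces a $\sigma^2$.

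Concretely, the key steps in order are: (i) write $F = q(\M(f_0),\dots,\M(f_k))\circ\mathbf M^{-1}$ and reduce everything to the field side via the isometry $\mathbf M^*$, so that $\langle DF, DF\rangle_{L^2(\M)}$ becomes the explicit expression $4\sum_{i,j}\partial_i q\,\partial_j q\,\M(f_if_j)$ coming from \eqref{e:DG}; (ii) compute $2\EE(FG,F)-\EE(F^2,G)$ using bilinearity, the Leibniz rule for $D$, and the representation \eqref{e:rhsDDform}, checking that the cross terms involving $DG$ cancel and only the $\langle DF,DF\rangle_{L^2(\M)}$ piece survives, weighted by $G$; (iii) invoke the Revuz measure uniqueness to conclude that $\langle M^{[F]}\rangle$ is the PCAF with the claimed Revuz measure, which is exactly the additive functional $t\mapsto \sigma^2\int_0^t\|DF(A_s)\|^2_{L^2(A_s)}\,ds$ since the latter's Revuz measure is $G\mapsto \sigma^2\int G\,\|DF\|^2_{L^2(A)}\,d\m$.

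The main obstacle I anticipate is the bookkeeping in step (ii): one must verify carefully that the combination $2\EE(FG,F)-\EE(F^2,G)$ really eliminates all terms containing derivatives of $G$ and all ``mixed'' second-derivative contributions, leaving only $G\cdot\langle DF,DF\rangle_{L^2(\M)}$. This is a purely algebraic consequence of the first-order (derivation) property of $D$ encoded in \eqref{e:Leibniz}, but because our gradient lives in the $\phi$-dependent space $L^2(\M)$ rather than a fixed Hilbert space, one should double-check that the Leibniz rule and the pointwise formula \eqref{e:DDform} are applied consistently, and that all terms remain $\nu$-integrable (which follows from Lemma~\ref{Lem:sigmafin} exactly as in the proof of Theorem~\ref{thm:LIBP}). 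A secondary point requiring care is that \eqref{e:quad-M} is asserted only quasi-everywhere and as an identity of additive functionals, so the final identification should be phrased through the Revuz correspondence rather than as a pathwise computation.
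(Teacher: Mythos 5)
Your proposal is correct and takes essentially the same route as the paper's proof: both invoke \cite[Theorem~5.2.3]{Fukushima2011} to write the Revuz measure of $\langle M^{[F]}\rangle$ as $G\mapsto 2\EE(FG,F)-\EE(F^2,G)$, collapse this via the Leibniz rule for $D$ to $\sigma^2\|DF(A)\|^2_{L^2(A)}\,d\m(A)$, and conclude by matching this with the Revuz measure of the additive functional $t\mapsto\sigma^2\int_0^t\|DF(A_s)\|^2_{L^2(A_s)}\,ds$. The one detail the paper makes explicit that you pass over quickly is that $A\mapsto\|DF(A)\|^2_{L^2(A)}$ is itself in $\CX$, hence continuous on $\MeaSp$, which is what justifies the ``standard'' identification of the Revuz measure of that time-integral functional in your step (iii).
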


\begin{proof}
The quadratic variation $\langle M^{[F]}\rangle$ is a positive AF, which is associated with a Revuz measure  $\mu_{\langle M\rangle}$ via the Revuz correspondence (\cite[Section~5]{Fukushima2011}).
By  \cite[Theorem~5.2.3]{Fukushima2011}, this Revuz measure has $\mu_{\langle M\rangle}(G)=2\EE(F\cdot G,F)-\EE(F^2,G)$.    
From \eqref{e:gradCX}
we readily check that the Leibniz rule $D(FG)=F\cdot DG+G\cdot DF$ holds, which implies
\begin{equ}[e:Revuz]
d\mu_{\langle M\rangle}(A)=\sigma^2\|DF(A)\|^2_{L^2(A)}d\m (A) \;.
\end{equ}
Remark that with $F = q(\int f_1 dA,\dots,\int f_k dA) $, the functional
\begin{equs}
A\mapsto  & \|DF(A)\|^2_{L^2(A)} \\
&=\sum_{i,j=1}^k \partial_i q\big( A(f_1),\dots,A(f_k) \big)\partial_j q\big(A(f_1),\dots,A(f_k)\big) 
A( f_if_j)
\end{equs}
is also in $\CX$, and in particular continuous on $\MeaSp$. It is then standard (for instance following the same lines as the proof of \cite[Proposition~4.5]{AlbRock91}) to show that the Revuz measure corresponding to the right-hand side of \eqref{e:quad-M} is also \eqref{e:Revuz}, thus the lemma follows.
\end{proof}

To identify the diffusion as the weak solution we have the following  more concrete representations (as required by \eqref{e:QVM}).

\begin{Lem}\label{lem:identify-M}
For $F\in \CX$ with $F (A)= q(A(f_1),\cdots,A(f_k))$ we have
\begin{equ}[e:MF-brac]
\langle M^{[F]} \rangle_t=  4\sigma^2 \int_0^t
	 \sum_{i,j} \partial_i q \partial_j q \cdot A_s(f_i f_j )\,ds \;.
\end{equ}
In particular, for $F(A)=A(f)$ one has
\begin{equ}[e:MFAf]
\langle M^{[F]} \rangle_t=   4\sigma^2  \int_0^t A_s(f^2)\,ds \;,
\end{equ}
and $M^{[F]}_t =2\sigma \int_0^t  (A_s(f^2))^{\frac12}\,d\beta^f_s$ for a  one-dimensional Brownian motion $\beta^f$ as required in \eqref{e:1d-proj}.

Moreover, for $F=A(f)$ and $G=A(g)$ one has
\begin{equ}[e:Covariation]
\langle M^{[F]},M^{[G]} \rangle_t=   4\sigma^2  \int_0^t A_s(fg)\,ds \;.
\end{equ}
\end{Lem}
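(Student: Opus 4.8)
The plan is to evaluate the abstract quadratic-variation identity \eqref{e:quad-M} of Lemma~\ref{lem:Quad-M} on the explicit gradient, read off the two stated special cases, and then upgrade the Fukushima martingale additive functional to a genuine It\^o integral against a Brownian motion.

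First I would substitute the gradient into \eqref{e:quad-M}. For $F(A)=q(A(f_1),\dots,A(f_k))$ the $L^2(A)$-gradient is $DF=2\sum_{i=1}^k\partial_i q\,f_i$ (the factor $2$ coming from differentiating $e^{2\phi}$, cf.\ \eqref{e:DG}), so that
\[
\|DF(A)\|_{L^2(A)}^2=\int_\TT\Big(2\sum_i\partial_i q\,f_i\Big)^2\,dA=4\sum_{i,j=1}^k\partial_i q\,\partial_j q\,A(f_i f_j),
\]
with every derivative evaluated at $(A(f_1),\dots,A(f_k))$. Inserting this into \eqref{e:quad-M} yields \eqref{e:MF-brac} at once. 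Specializing to $q(x)=x$, $k=1$, $f_1=f$ (so $\partial_1 q\equiv1$) gives \eqref{e:MFAf}.

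Next I would produce the Brownian representation of $M^{[F]}$ for $F(A)=A(f)$. Since $\HuntA$ is a diffusion (Proposition~\ref{prop:existsA}), $M^{[F]}$ has continuous paths and is, under $P_z$ for quasi-every $z$, a square-integrable continuous martingale with $\langle M^{[F]}\rangle_t=4\sigma^2\int_0^t A_s(f^2)\,ds$ by \eqref{e:MFAf}. For $f\not\equiv0$ the law of $A_s$ is absolutely continuous with respect to $\m$ for $s>0$ (Section~\ref{sec:diffusion}), and $\m$-a.e.\ measure charges every nonempty open set (a property of GMC); hence $A_s(f^2)>0$ almost surely on $(0,\zeta)$. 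Setting $\psi_s:=2\sigma\,(A_s(f^2))^{1/2}>0$, the process $\beta^f_t:=\int_0^t\psi_s^{-1}\,dM^{[F]}_s$ is then a well-defined continuous local martingale with $\langle\beta^f\rangle_t=\int_0^t\psi_s^{-2}\,d\langle M^{[F]}\rangle_s=t$, hence a standard one-dimensional Brownian motion by L\'evy's characterization, and $M^{[F]}_t=\int_0^t\psi_s\,d\beta^f_s=2\sigma\int_0^t(A_s(f^2))^{1/2}\,d\beta^f_s$, which is the asserted form; possible degeneracies of the integrand are absorbed by appealing to the standard representation theorem for continuous local martingales on a (possibly enlarged) probability space.

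Finally, \eqref{e:Covariation} follows by polarization together with the linearity of the Fukushima decomposition $F\mapsto M^{[F]}$: taking $F=A(f)$, $G=A(g)$ one has $M^{[F]}\pm M^{[G]}=M^{[A(f\pm g)]}$, so applying \eqref{e:MFAf} to $f\pm g$ and using
\[
\langle M^{[F]},M^{[G]}\rangle=\tfrac14\big(\langle M^{[A(f+g)]}\rangle-\langle M^{[A(f-g)]}\rangle\big),\qquad (f+g)^2-(f-g)^2=4fg,
\]
gives $\langle M^{[F]},M^{[G]}\rangle_t=4\sigma^2\int_0^t A_s(fg)\,ds$. The main obstacle is the third step: passing from the Fukushima-theoretic martingale additive functional to a classical It\^o integral against a bona fide Brownian motion requires the $P_z$-a.s.\ path continuity of $M^{[F]}$ (guaranteed for quasi-every starting point by the diffusion property) and careful control of the integrand $A_s(f^2)$, which degenerates as the total mass is absorbed at $0$ near the lifetime $\zeta$.
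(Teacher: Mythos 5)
Your route is the same as the paper's: feed the explicit gradient into the abstract identity \eqref{e:quad-M} of Lemma~\ref{lem:Quad-M} to get \eqref{e:MF-brac}, obtain the Brownian motion by a martingale-representation argument (your explicit L\'evy-characterization construction of $\beta^f$ is a correctly fleshed-out version of the paper's appeal to the representation theorem), and get \eqref{e:Covariation} by linearity of the Fukushima decomposition plus polarization. Your constant bookkeeping (keeping the factor $2$ from \eqref{e:DG} in the gradient) is self-consistent and reproduces the stated $4\sigma^2$.

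There is, however, one genuine gap, and it infects all three of your later steps: $F(A)=A(f)$ is \emph{not} an element of $\CX$. Definition~\ref{def:mcC} together with \eqref{e:zero-th-co} requires the outer function $q$ to be compactly supported, with the support in the total-mass coordinate contained in some $(\eps,\eps^{-1})$; the choice $q(x)=x$ violates this, so ``specializing to $q(x)=x$'' is not a legitimate specialization of \eqref{e:MF-brac}. This restriction is not cosmetic: since $\m$ is only $\sigma$-finite and blows up near total mass $0$ and $\infty$, the compact support away from these values is exactly what makes the integrability estimates of Lemma~\ref{Lem:sigmafin}, hence the Dirichlet form, the Fukushima decomposition \eqref{e:F-decom}, and Lemma~\ref{lem:Quad-M}, applicable. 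In particular, for $F=A(f)$ you cannot yet assert that $Y^{[F]}$ admits a Fukushima decomposition, that $M^{[F]}$ is a square-integrable martingale, or (in the polarization step) that $M^{[A(f\pm g)]}$ exists and that $M^{[A(f)]}\pm M^{[A(g)]}=M^{[A(f\pm g)]}$. The paper flags precisely this point and resolves it by ``standard truncation/localization arguments'': take cutoffs $\chi_n$ equal to $1$ on $[1/n,n]$ and supported in $(1/(n+1),n+1)$, so that $F_n(A):=\chi_n(A(1))\,A(f)$ belongs to $\CX$ and agrees with $A(f)$ on the open set $\{1/n<A(1)<n\}$; apply \eqref{e:MF-brac} to $F_n$, use locality (and the stopping times $T_n=\inf\{t:A_t(1)\notin(1/n,n)\}$) to identify $M^{[A(f)]}$ with $M^{[F_n]}$ up to time $T_n$, and then let $n\to\infty$, noting $T_n\nearrow\zeta$ because the paths are continuous in $\MeaSp$ and the total mass stays in $(0,\infty)$ strictly before the lifetime. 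With this localization inserted, your L\'evy-characterization and polarization steps go through on $[0,\zeta)$ exactly as you wrote them.
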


\begin{proof}
By the calculation of $DF$ from \eqref{e:gradCX} and Lemma~\ref{lem:Quad-M}
the claims \eqref{e:MF-brac} and \eqref{e:MFAf}   follow. Remark that $F(A)=A(f)$ is not in $\CX$ (not compactly supported); one obtains the desired result by standard truncation/localization arguments. The statement 
on identification of $M^{[F]}$
follows from the continuity of the AF $Y^{[F]}$ which implies continuity of $M^{[F]}$ together with martingale representation theorem.
\end{proof}

\begin{Lem}\label{lem:identify-N}
Let $F\in \CX$ and $N^{[F]}$ be the zero energy continuous AF in \eqref{e:F-decom}. We have
\[
N^{[F]}_t=\int_0^t  {\mc L}F(A_s)ds \;.
\]
In particular, for $F=A(f)$ one has
$N^{[F]}_t=2 \int_0^t   \Big(  \area_0( f\Delta\phi_s )
-\lambda    A_s ( f )\Big)ds$ with $\phi_s=\mathbf M^{-1} A_s$.
\end{Lem}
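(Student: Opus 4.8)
The plan is to identify the zero-energy part $N^{[F]}$ through the $L^2(\m)$-generator of the Dirichlet form $\EE$. First I would check that every $F\in\CX$ belongs to the domain $\mathcal D(A)$ of the self-adjoint generator $A$ associated with $\EE$, and that $AF=\mc LF$ $\m$-a.e., where $\mc L$ denotes the operator \eqref{e:def-L} transported to $\MeaSp$ through $\mathbf M$. By the symmetry of $\EE$ and its definition \eqref{e:def-Dirichlet}, one has $\EE(G,F)=\int G\,(-\mc LF)\,d\m$ for all $G\in\CX$. Since $\mc LF\in L^2(\m)$ -- by the integrability estimates of Lemma~\ref{Lem:sigmafin}, exactly as in the proof of closability -- and since $\CX$ is a core for $\EE$ by Proposition~\ref{prop:regular}, this identity extends by density to every $G\in\mathcal D(\EE)$; this is precisely the statement that $F\in\mathcal D(A)$ with $AF=\mc LF$.

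Next I would exhibit the zero-energy part explicitly. Because $\mc LF\in L^1(\m)$, the absolutely continuous (hence smooth) signed measure $\mc LF\cdot\m$ corresponds through the Revuz correspondence to a continuous additive functional $\tilde N_t=\int_0^t\mc LF(A_s)\,ds$, which is of bounded variation and therefore of zero energy. It then remains to verify that $Y^{[F]}-\tilde N$ is a martingale additive functional: granting this, the uniqueness in the Fukushima decomposition \eqref{e:F-decom} (\cite[Theorem~5.2.2]{Fukushima2011}) forces $N^{[F]}=\tilde N$ and $M^{[F]}=Y^{[F]}-\tilde N$. The martingale property is a direct consequence of the semigroup identity $T_tF-F=\int_0^t T_s(\mc LF)\,ds$, valid for $F\in\mathcal D(A)$: combining additivity of $Y^{[F]}$ and of $\tilde N$ with the Markov property of $\HuntA$ yields $\E_z[\,Y^{[F]}_t-\tilde N_t\mid\mathcal F_s]=Y^{[F]}_s-\tilde N_s$ for quasi-every $z$. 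Equivalently, one may simply invoke the standard identification of $N^{[u]}$ for $u$ in the domain of the generator in Dirichlet form theory.

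Finally I would specialize to $F(A)=A(f)$. Taking $q(x)=x$ (so $\partial_1 q=1$ and $\partial^2_{11}q=0$) in \eqref{e:def-L} gives $\mc LF(\phi)=2\big(\langle f,\Lap\phi\rangle-\lambda\,\M(f)\big)$; transporting through $\mathbf M$ and writing $\phi_s=\mathbf M^{-1}A_s$, we have $\langle f,\Lap\phi_s\rangle=\area_0(f\Delta\phi_s)$ and $\M(f)=A_s(f)$, so that $\tilde N_t=2\int_0^t\big(\area_0(f\Delta\phi_s)-\lambda A_s(f)\big)\,ds$, which is the asserted formula. Since $A\mapsto A(f)$ is not compactly supported it is not literally in $\CX$, and this is remedied by the same truncation/localization argument used in Lemma~\ref{lem:identify-M}. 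Note also that $\phi_s$ enters only under the time integral, so it suffices that $\phi_s=\mathbf M^{-1}A_s$ be defined for Lebesgue-a.e. $s$; this holds since $\mathbf M^{-1}$ is defined $\m$-a.e. and, for a.e. $s$, the law of $A_s$ is absolutely continuous with respect to $\m$.

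The main obstacle is the passage from the $\m$-a.e. (analytic) identities of the first two steps to the quasi-everywhere (pathwise) statements demanded by the Fukushima decomposition: one must control the properly exceptional set off which the semigroup/generator identity and the finiteness of $\tilde N$ hold, which is where the full-strength integrability of $\mc LF$ from Lemma~\ref{Lem:sigmafin} and the exceptional-set apparatus of \cite[Sections~4.1--4.2]{Fukushima2011} are needed.
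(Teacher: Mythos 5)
Your proposal is correct in substance but takes a genuinely different route from the paper. The paper stays entirely on the ``measure'' side: from the integration-by-parts identity ${\mc E}(F,G)=\int G\,d\nu_F$ with $d\nu_F={\mc L}F\,d\m$, it invokes \cite[Corollary~5.4.1]{Fukushima2011} to conclude that $N^{[F]}$ is the CAF whose (signed) Revuz measure is $\nu_F$, and then only has to check that $t\mapsto{\mc L}F(A_t)$ is progressively measurable so that $\int_0^t{\mc L}F(A_s)\,ds$ is a well-defined CAF with the same Revuz measure (arguing as in Example~5.1.1 of \cite{Fukushima2011}). You instead go through the $L^2(\m)$-generator: $F\in\mathcal D(A)$ with $AF={\mc L}F$, then the Dynkin/semigroup identity plus uniqueness in \cite[Theorem~5.2.2]{Fukushima2011}. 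Both hinge on the same integration-by-parts formula and on Lemma~\ref{Lem:sigmafin}; the trade-off is that your route needs ${\mc L}F\in L^2(\m)$ and the core property of $\CX$ (both available here), whereas the paper's route only needs $\nu_F$ to be a smooth signed measure with locally integrable density, and shifts the remaining work to a measurability argument. Your route also yields the martingale part for free, but it pushes the q.e.-versus-a.e. bookkeeping (which you correctly flag) into the semigroup argument, whereas the Revuz route keeps it inside the cited theorems.

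One justification in your second step is actually false as stated and should be repaired: a CAF of bounded variation is \emph{not} automatically of zero energy when the symmetrizing measure is infinite, which is the case here since $\m$ is only $\sigma$-finite (e.g.\ $\tilde N_t=t$ is of bounded variation but has energy $\lim_{t\searrow0}\frac{1}{2t}\E_\m[t^2]=\lim_{t\searrow0}\frac t2\,\m(\MeaSp)=\infty$). The correct argument uses exactly the $L^2$ property you established: by Cauchy--Schwarz and sub-Markovianity of the semigroup,
\begin{equation*}
\E_\m\Big[\Big(\int_0^t {\mc L}F(A_s)\,ds\Big)^2\Big]\le t\int_0^t \E_\m\big[({\mc L}F)^2(A_s)\big]\,ds\le t^2\,\|{\mc L}F\|^2_{L^2(\m)}\;,
\end{equation*}
so that $e(\tilde N)\le\lim_{t\searrow 0}\frac t2\|{\mc L}F\|^2_{L^2(\m)}=0$. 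With this fix (and noting, for the uniqueness step, that $Y^{[F]}-\tilde N$ has finite energy since $e(\tilde N)=e(N^{[F]})=0$), your argument goes through.
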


\begin{proof}
 We have by integration by parts:
$${\mc E}(F,G)=\int Gd\nu_F$$
where $d\nu_F(A)={\mc L}F(A)d\m(A)$; here $\nu_F$ has a locally integrable density with respect to $\m$ (actually integrable, see Lemma \ref{Lem:sigmafin}). By \cite[Corollary~5.4.1]{Fukushima2011}, $N$ is an AF with Revuz measure $\nu_F$ (i.e. $N=N^+-N^-$ where $N^\pm$ is a positive AF with Revuz measure $\nu_F^\pm$).

Since ${\mc L}F$ is measurable on $\MeaSp$ locally compact, it can be approximated in $L^p(\m)$ by continuous functions. $t\mapsto {\mc L}F(A_t)$ is an $L^p$ limit of continuous adapted processes, hence is progressively measurable. In particular $t\mapsto \int_0^t{\mc L}F(A_s)ds$ is well defined as a process and one can identify (arguing as in Example 5.1.1 of \cite{Fukushima2011}).
$$N_t=\int_0^t{\mc L}F(A_s)ds\;.$$
\end{proof}
In particular remark that for $F\in\CX$, $N^{[F]}$ has bounded variation, so that $Y^{[F]}$ is a continuous semimartingale, and \eqref{e:F-decom} is simply its semimartingale decomposition.

\begin{proof}[Proof of Theorem~\ref{thm:main}]
The claim of the theorem now immediately follows from Lemma~\ref{lem:identify-M} and Lemma~\ref{lem:identify-N}.
\end{proof}

\paragraph{Absorption.}
Let  $A_t(1)=\int_{\TT} A_t(dx)$ be  the total volume of the torus $\TT$.
By applying these results 
we have:
\begin{Cor}\label{cor:absorb}
 The process $(A_t(1))_{t\geq 0}$ is a.s. absorbed at 0 in finite time. 
\end{Cor}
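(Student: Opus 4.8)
The plan is to extract from Theorem~\ref{thm:main} a closed one-dimensional autonomous SDE for the total mass $V_t := A_t(1)$ and then analyze its behaviour at the boundary $0$. Setting $f\equiv 1$ in \eqref{e:1d-proj-theo}, the drift term $\area_0(f\,\Delta\phi_t)$ becomes $\int_\TT \Delta\phi_t\,\area_0 = \langle \Delta 1,\phi_t\rangle = 0$ (the pairing of a Laplacian against the constant $1$ over the closed torus vanishes, using $\langle f,\Delta\phi\rangle=\langle\Delta f,\phi\rangle$), while $A_t(f^2)=A_t(f)=V_t$. Hence $V_t$ solves
\[
dV_t = -2\lambda V_t\,dt + 2\sigma\sqrt{V_t}\,d\beta_t,\qquad V_0=z(1)>0,
\]
for a standard one-dimensional Brownian motion $\beta$, and $V_t\ge 0$ for all $t$ since $A_t$ is a positive measure. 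This is a critical (zero-mean) Cox--Ingersoll--Ross / squared-Bessel type diffusion.

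First I would record that $0$ is an absorbing boundary. Both coefficients $v\mapsto -2\lambda v$ and $v\mapsto 2\sigma\sqrt v$ vanish at $v=0$; the drift is Lipschitz and the diffusion coefficient is $\tfrac12$-H\"older, so pathwise uniqueness holds by the Yamada--Watanabe criterion. Since $V\equiv 0$ solves the equation restarted from $0$, uniqueness forces the process to stay at $0$ once it reaches it. This reduces the corollary to showing that the hitting time $T_0:=\inf\{t\ge 0:V_t=0\}$ is almost surely finite.

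For finiteness of $T_0$ the hard part is handled cleanly by comparison with the driftless equation. Let $\tilde V$ solve $d\tilde V_t = 2\sigma\sqrt{\tilde V_t}\,d\beta_t$ driven by the \emph{same} $\beta$ with $\tilde V_0=V_0$. Since the two equations share the ($\tfrac12$-H\"older) diffusion coefficient and the drift of $V$ satisfies $-2\lambda v\le 0$ for $v\ge 0$ (here $\lambda>0$), the one-dimensional comparison theorem (e.g.\ Revuz--Yor IX.3.7, or Ikeda--Watanabe) yields $0\le V_t\le \tilde V_t$ for all $t\ge 0$ almost surely. Now $\tilde V$ is, up to the deterministic time change $t\mapsto \sigma^2 t$, a squared Bessel process of dimension $0$: writing $\tilde V_t = Y_{\sigma^2 t}$ with $dY_u = 2\sqrt{Y_u}\,dW_u$, the critical Feller diffusion $Y$ reaches $0$ in finite time almost surely. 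Thus $\tilde V$ hits $0$ at an a.s.\ finite time $\tilde T$, and from $0\le V_{\tilde T}\le \tilde V_{\tilde T}=0$ we get $T_0\le \tilde T<\infty$ a.s. Combined with absorption at $0$, this proves the claim.

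The two delicate points are (i) the applicability of pathwise uniqueness and of the comparison theorem despite the non-Lipschitz square-root diffusion coefficient, both covered by the Yamada--Watanabe $\tfrac12$-H\"older condition, and (ii) the a.s.\ extinction of the critical squared Bessel process $Y$, which is classical. I expect the only genuine (and minor) obstacle to be the bookkeeping about the starting point: \eqref{e:1d-proj-theo} holds a priori only for quasi-every $z$ and with an $f$-dependent Brownian motion, so one should first note that for $f\equiv 1$ it produces a bona fide autonomous scalar SDE for $V_t$ under $P_z$ for q.e.\ $z$, to which the one-dimensional arguments above apply verbatim.
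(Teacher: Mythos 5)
Your proposal is correct and follows essentially the same route as the paper: take $f\equiv 1$ in \eqref{e:1d-proj-theo}, note that the $\area_0(f\Delta\phi_t)$ term vanishes, and compare the resulting SDE $dV_t=2\sigma\sqrt{V_t}\,d\beta_t-2\lambda V_t\,dt$ with the $\lambda=0$ case, a time-changed ${\rm BESQ}(0)$ (Feller diffusion), which is a.s.\ absorbed at $0$ in finite time. The only difference is bookkeeping: where the paper invokes stochastic domination and the continuous-state branching process literature, you make the domination pathwise via the Yamada--Watanabe/comparison theorem and justify absorption at $0$ by pathwise uniqueness rather than by the construction of the state space $\MeaSp$ (in which $0$ is the cemetery).
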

\begin{proof}
Consider the function $f\equiv 1$; one then obtains a simple autonomous SDE satisfied by $A_t(1)$ 
\begin{align*}
dA_t(1)&=2\left(\int \phi_t(\Delta 1) dA_0-\lambda A_t(1)\right)dt+2\sigma\sqrt{A_t(1)}d\beta_t\\
&=2\sigma\sqrt{A_t(1)}d\beta_t -2\lambda A_t(1)dt
\end{align*}
where $\beta$ is a 1-dimensional Brownian motion.
One recognizes the evolution of a continuous-state branching process (CSBP), which is also continuous in time (see e.g. \cite{lambert2007quasi,lamperti1967continuous}). A solution for $\lambda\geq 0$ is stochastically dominated by a solution for $\lambda=0$. Setting $\lambda=0$, one further recognizes the SDE satisfied by a square Bessel process of dimension 0 (BESQ(0), also known as the Feller diffusion). That process is a.s. absorbed at 0 in finite time. 
\end{proof}
Consequently,  recalling that we have defined
$\MeaSp:={\mc M}(\TT)\setminus\{0\}$,
the lifetime $\zeta$ in \eqref{e:conti-path} is a.e. finite; 
the process evolves continuously in $\MeaSp$ until it is absorbed at 0, which occurs in finite time. Alternatively we can consider ${\mc M}(\TT)$ itself as the state space, and then $0$ is an absorbing state.

\section{Extensions}
\label{sec:extensions}

In the case of the torus, we started from a heuristic derivation of the generator and arrived at the Dirichlet form \eqref{e:DDform}. Since the formal dynamic of the SRF is defined in terms of intrinsic, local quantities, it is natural to expect it can be defined on surfaces of general topology. However, it will appear momentarily that the use of a reference {\em flat} metric on the torus played a subtle role in these heuristics, and in the general case one needs to properly account for the curvature of the reference metric, in order to obtain processes defined invariantly. For that purpose it will be convenient to reverse the logic leading from \eqref{e:def-Dirichlet} to \eqref{e:DDform} and use the natural generalization of \eqref{e:DDform} as a starting point instead.

In Section \ref{sec:surfaces}, we address the case of general compact surfaces. The key (and only substantial) difference with the case of the torus lies in the use of reference metric with curvature. We explain the needed modifications and the rest of the argument follows as in the torus case {\em mutatis mutandis} (in particular, the implementation of Dirichlet forms in Section \ref{sec:solution-Dirichlet} is unchanged). In Section \ref{Sec:insert}, we explain the additional arguments needed to accommodate vertex insertions.

\subsection{Compact surfaces}
\label{sec:surfaces}

Considering the SRF on more general surfaces involves two key difficulties: curvature of the reference metric, and the potential presence of boundary components. For clarity we explain separately the arguments involved in incorporating these two elements.

\paragraph{Closed surfaces.}
We turn to consider a general closed Riemannian surface $\Sigma$ with a reference metric $g_0$.
According to the discussion  in Section~\ref{sec:Intro} (in particular below \eqref{e:6piV-S}), it would seem to be natural to try to construct a SRF of the form
\begin{equ}\label{e:guess}
\partial_t\phi\stackrel{??}{=}-K_g-\lambda+\sigma\xi_g
=e^{-2\phi} (\Lap\phi-K_0)-\lambda+\sigma\xi_g \;,
\end{equ}
so that for $A=\Wick{e^{2\phi}\area_0}$  we expect the following one dimensional projection (compare with \eqref{e:1d-proj-theo})
\begin{equ}\label{Eq:guess}
d   A_t(f)
\stackrel{??}{=}2\Big(  \area_0( f\Delta\phi_t )
-  \area_0( f K_0 )
-\lambda    A_t ( f )\Big)dt
+2\sigma\left( A_t(f^2)\right)^{\frac12}d\beta_t^f \;.
\end{equ}
With this `naive' guess, 
 for $F=q(A(f_1),\dots,A(f_n))$ we have (compare with \eqref{e:def-L})
\[
{\mc L}F(\phi)\stackrel{??}{=}
\sum_i 2\partial_i q \cdot \Big(\area_0( f_i\Lap\phi )- \area_0( f_iK_0)-\lambda A( f_i )\Big)+2\sigma^2\sum_{i,j}\partial^2_{ij}q  \cdot A( f_if_j)\;.
\]
However this {\it formal} derivation -- as we have put questions marks on the above identities -- turns out to fail to correctly account for the ``quantum" correction in Liouville CFT, see Remark~\ref{rem:Qdyn}; we will explain about this momentarily.

Here we follow closely \cite{guillarmou2016polyakov}, which we refer to for a detailed treatment. 
Recall the convention comparison $ (\phi,\lambda,\sigma) \leftrightarrow   (X,\mu,\gamma)$
of \eqref{e:lambda-sigma-mu}, and in this section we turn to the ``probabilists' convention'' for easier 
referencing to  \cite{guillarmou2016polyakov}. With a surface $\Sigma$ and a reference metric $g_0$ we consider the action (see  \cite[(2.2)]{guillarmou2016polyakov})  \footnote{Note that \cite{guillarmou2016polyakov} uses the letter $K$ for scalar curvature which equals twice the Gauss curvature, whereas we use the letter $K$ for Gauss curvature; and \cite{guillarmou2016polyakov} uses the letter $\varphi$ for the field instead of $X$ as here.}
\begin{equation}\label{eq:Liouvact}
S_L(g_0,X)=\frac 1{4\pi}\int_\Sigma (|\nabla X|^2+2QK_0X+4\pi\mu e^{\gamma X})\, \area_0
\end{equation}
where 
\begin{equation}\label{eq:Qdef}
Q=\frac 2\gamma+\frac\gamma 2\;.
\end{equation}
This induces the measure on fields (see  \cite[(3.1)]{guillarmou2016polyakov})
$$d\nu_{g_0}(X)=\exp\left(-S_L(g_0,X)\right){\mc D}X
=e^{-\frac 1{4\pi}\int_\Sigma (2QK_0 X+4\pi\mu e^{\gamma X})\,\area_0}\left(e^{-\frac 1{4\pi}\int_\Sigma |\nabla X|^2 \,\area_0}{\mc D}X\right)
$$
where ${\mc D}X$ is the formal flat measure on fields. The last bracket is interpreted as a $\sigma$-finite measure on paths as in \eqref{eq:pathint}, viz. as the sum of a zero-mean GFF and a ``Lebesgue-distributed" constant; the rest of the action is then an almost everywhere defined Radon-Nikod\'ym derivative. The quadratic part is characterized, up to multiplicative constant, by the Cameron-Martin formula:
\begin{equation}\label{eq:CMgen}
e^{-\frac 1{4\pi}\int |\nabla (X+h)|^2 \,\area_0}{\mc D}(X+h)
=e^{-\frac 1{2\pi}\int \nabla X\cdot\nabla h \,\area_0-\frac 1{4\pi}\int |\nabla h|^2 \,\area_0}
\left(e^{-\frac 1{4\pi}\int |\nabla X|^2\area_0}{\mc D}X\right)
\end{equation}
for $h\in H^1(\Sigma)$, as in Lemma \ref{lem:GIBP} (in particular \eqref{e:to-be-diff-t}).

Indeed, remark that, from the local nature of the Liouville measure, there is no difficulty in constructing it on surfaces along the following lines: cover $\Sigma$ by finitely many complex disks $(U_i)_{1\leq i\leq n}$; for any $V\subset\subset U_i$, the restriction of the zero-mean GFF to $V$ is absolutely continuous w.r.t. to the zero-boundary GFF on $U_i$. Then one can define the Liouville measure on each $U_i$ and patch them together. Moreover one can assume that each $U_i$ carries isothermal coordinates. Alternatively one can retrace the steps of the planar construction and check that it carries over to surfaces. One also has the basic moment estimates used in Lemma \ref{Lem:sigmafin}.

Let $\hat g_0=e^{2\psi_0}g_0$ be another reference metric with volume form $\hat\area_0$. Then the GMC regularization introduces the following anomalous scaling 
(see  \cite[(3.12)]{guillarmou2016polyakov}):
\begin{equation}\label{eq:anomscal}
\Wick{e^{\gamma X} \hat \area_0}_{\hat g_0}
=e^{(2+\gamma^2/2)\psi_0}\Wick{e^{\gamma X} \area_0}_{g_0}
\end{equation}
i.e. $\Wick{e^{\gamma \hat X} \hat\area_0}_{\hat g_0}=\Wick{e^{\gamma X}\area_0}_{g_0}$ if $\hat X=X-Q\psi_0$ (here $:\;:_{g_0}$ denotes the limit of an $\eps$-regularization scheme such as \eqref{eq:epsreg} {\em where $\eps$ is measured in $g_0$}).
Moreover,
we have the conformal anomaly (see \cite[Proposition 4.2]{guillarmou2016polyakov})
\begin{equ}[e:confanom]
\int F(X)d\nu_{\hat g_0}(X)\propto\int F(X-Q\psi_0)d\nu_{g_0}(X)
\end{equ}
and consequently the pushforward of $\nu_{g_0}$ by 
$X\mapsto  M_X \stackrel{def}{=} \Wick{e^{\gamma X}\area_0}_{g_0}$ (a $\sigma$-finite measure on $\MeaSp:={\mc M}(\Sigma)\setminus\{0\}$) does not depend (up to multiplicative constant) on the choice of reference metric $g_0$, just on the Riemann surface structure of $\Sigma$. This justifies the choice of \eqref{eq:Qdef} in the definition of the action \eqref{eq:Liouvact}. Let us denote that measure by $\m$.

We have the integration-by-parts formula:
 \begin{equ}[e:IBPsurface]
 4\pi\int D_hF(X)d\nu_{g_0}(X)
 =\int F(X)\left(2 \area_0 \Big( \nabla_0 h\cdot\nabla_0X + Q K_0h \Big)+4\pi\mu\gamma M_X(h)\right)d\nu_{g_0}(X)
 \end{equ}
derived from \eqref{eq:CMgen} along the same lines as Theorem \ref{thm:LIBP}.

Similarly to Definition~\ref{def:mcC}   
 and Lemma~\ref{lem:DG}, 
 one can define test functionals and evaluate their Fr\'echet derivatives as follows: if $M_X=\Wick{e^{\gamma X} \area_0}_{g_0}$ and $F$ is a test functional of the form
$$F(X)=q(M_X (f_0),\dots, M_X( f_k))$$
where the $f_i$'s are smooth on $\Sigma$ and $q:\R^{k+1}\rightarrow\R$ is $C^2$, 
the Fr\'echet derivative of $F$ in the smooth direction $h$ is
\begin{equ}[e:DhFX]
D_h F(X)= \gamma \sum_i \partial_i q  (M_X (f_0),\dots, M_X( f_k)) \,M_X ( h f_i )\;.
\end{equ}
Similarly to \eqref{e:DG}, the gradient $DF$ of $F$ {\em w.r.t. to the Liouville $L^2$ norm} is thus given by
\begin{equ}[e:DFX]
DF(X)=\gamma\sum_i\partial_i q  (M_X (f_0),\dots, M_X( f_k)) f_i \;.
\end{equ}

From the study of the torus case in Section~\ref{sec:Closability}, it is at this stage natural to take as starting point the following Dirichlet form:
$${\mc E}(F,F)=\int \|DF\|^2_{L^2(M_X)}d\nu_{g_0}$$
which depends on the choice of reference metric $g_0$ only through a multiplicative constant. Running the computation of Lemma \ref{lem:DDform} in reverse order, we have (write $\nu=\nu_{g_0}$)
\begin{align*}
{\mc E}(F,F)
&=\int\|DF(X)\|^2_{L^2(M_X)}d\nu(X)\\
&\stackrel{\eqref{e:DFX}}{=}
	\gamma^2\int \sum_{i,j}\partial_i q \,\partial_j q \,M_X( f_if_j ) \, d\nu(X)\\
&\stackrel{\eqref{e:DhFX}}{=}
	\int \Big(\gamma\sum_i D_{f_i}( q \partial_i q)-\gamma^2\sum_{i,j}q \partial^2_{ij}q M_X( f_if_j ) \Big) d\nu(X)\\
&\stackrel{\eqref{e:IBPsurface}}{=}
	-\int q \Big(\sum_i \partial_i q \cdot \Big(\frac\gamma{2\pi} \area_0( f_i\Lap X)-\frac{Q\gamma}{2\pi} \area_0(f_i K_0)-\mu\gamma^2 M_X( f_i)\Big) \\
&\hphantom{oooooooooo}+\gamma^2\sum_{i,j}\partial^2_{ij}q M_X( f_if_j)\Big)d\nu(X)\\
&=\int F(-{\mc L}F)d\nu(X)
\end{align*}
where 
\begin{equation}\label{e:gensurf}
{\mc L}F=\gamma^2\sum_{i,j}\partial^2_{ij}q  M_X( f_if_j )
+\sum_i \partial_i q \left(\frac\gamma{2\pi}\area_0( f_i\Lap X ) -\frac{Q\gamma}{2\pi}\area_0( f_i K_0)-\mu\gamma^2 M_X ( f_i)\right)\;.
\end{equation}
More generally, we have 
$${\mc E}(F,G)=\int\langle DF(X),DG(X)\rangle_{L^2(M_X)}d\nu(X)=\int F(-{\mc L}G)d\nu(X)\;.$$

This corresponds to the formal dynamics 
\begin{equation}\label{e:Ricci-A-surf}
\partial_t A_t=\frac{\gamma}{2\pi}\Lap X \area_0-\frac{Q\gamma}{2\pi}K_0\area_0
-\mu\gamma^2 A_t+\gamma\sqrt 2\xi_g A_t
\end{equation}
or the 1d dynamics 
\begin{equation}\label{e:Ricci-Af-surf}
dA_t (f)
=\left(\frac{\gamma}{2\pi} \area_0( f\Lap X ) -\frac{Q\gamma}{2\pi}\area_0( fK_0 )
 -\mu\gamma^2 A_t (f)\right)dt +\sqrt 2 \gamma \, \sqrt{A_t( f^2)} d\beta^f_t
\end{equation}
for $f$ a smooth function on $\Sigma$.

\begin{Rem}\label{rem:Qdyn}
Up to replacing $\gamma X$ (LQFT convention) with $2\phi$ (earlier convention),
  matching parameters
 by \eqref{e:lambda-sigma-mu}, 
 and a time change $t\mapsto 2\pi t$, the equation \eqref{e:Ricci-Af-surf} can be written as
 \[
 dA_t (f)
= 2 \left( \area_0( f\Lap\phi ) -\left(\frac{Q\gamma}{2}\right) \cdot \area_0( fK_0 )
 - \lambda A_t (f)\right)dt +2\sigma \, \sqrt{A_t( f^2)} d\beta^f_t
 \]
 which 
  differs from the na\"ive guess \eqref{Eq:guess} by a factor $Q\gamma/2$ (in front of the curvature term),
 which would be equal to $1$ in the ``classical'' case $Q=2/\gamma$, but is actually equal to  $1+\gamma^2/4 = 1+\sigma^2/(4\pi)$
 for the ``quantum'' value of $Q$ \eqref{eq:Qdef}. 
  
The right dynamic for the conformal factor should be (compare with the na\"ive guess \eqref{e:guess}):
\[
\partial_t\phi   
=e^{-2\phi} \Big(\Lap\phi- \Big(\frac{Q\gamma}{2}\Big)K_0\Big)-\lambda+\sigma\xi_g\;.
\]
 The discrepancy can explained as follows: the formal derivation of \eqref{Eq:guess} interprets the Liouville measure as a 2-form, but it actually transforms as a $Q\gamma$-form (see \eqref{eq:anomscal}).
\end{Rem}

The Fukushima decomposition follows in the same way as Section~\ref{sec:Fukushima}.

\paragraph{Bordered surfaces.}

Here we consider the case of bordered surfaces: $\Sigma$ is an open surface, and its boundary $\partial\Sigma$ consists of finitely many (possibly zero) circles. For simplicity (to avoid introducing geodesic curvature), we fix a reference metric $g_0$ such that the neighborhood of each boundary component is isometric to a flat half-cylinder $(\R/\ell\Z)\times [0,\eps)$ ($\eps>0$ and $\ell$ is the length of the boundary circle); in particular boundary components are geodesic.

We consider a GFF on $\Sigma$ with covariance operator $\frac{\sigma^2}2(-\Delta)^{-1}$ and Neumann boundary condition (b.c.), and again sum over zero modes to obtain a $\sigma$-finite measure $\hat\mu$ on fields. It satisfies a Cameron-Martin formula
$$\int G(\phi+th)d\hat\mu(\phi)=\int G(\phi)\exp\left(\frac{2t}{\sigma^2}\langle \phi,h\rangle_H-\frac{t^2}{\sigma^2}\|h\|_H^2\right)d\hat\mu(\phi)$$
where $h$ is smooth on $\bar\Sigma$ with Neumann b.c. and
$$\langle\phi,h\rangle_H=\int_\Sigma(\nabla_0\phi\cdot\nabla_0 h)\omega_0=\int_\Sigma h(-\Delta_0)\phi\omega_0=\int_\Sigma \phi(-\Delta_0)h\omega_0$$
where gradient, Laplacian and area measure are defined in terms of $g_0$, and the meaning of the Green's formula on $\Sigma$ (with $\phi$ a GFF) is apparent e.g. from expanding $\phi$ and $g$ in eigenfunctions of the Laplacian with Neumann b.c.. 

We define $\TestSp$ as in Definition \ref{def:mcC}, except that $f_0,\dots,f_k$ are now smooth functions on $\overline\Sigma$ with Neumann b.c.. Then Lemma \ref{lem:GIBP} holds with these generalized definitions of $\hat\mu$, $\TestSp$ and for simplicity we take $h$ smooth with Neumann b.c.. 

We proceed to define $\nu_{g_0}$ as above, except that the GFF base measure is now understood as a Neumann GFF. Then the integration-by-parts \eqref{e:IBPsurface} holds {\em verbatim}. 

If $\hat g_0=e^{2\psi_0}g_0$ is another metric satisfying the same conditions near the boundary, note that necessarily $\psi_0$ has Neumann b.c. (see (1.14) in \cite{OPS}). The same conformal anomaly formula \eqref{e:confanom} relating $\nu_{\hat g_0}$ to $\nu_{g_0}$ holds. Once again we use as starting point the Dirichlet form (writing $\nu=\nu_{g_0}$)
$${\mc E}(F,F)=\int \|DF\|^2_{L^2(M_X)}d\nu=\int F(-{\mc L}F)d\nu(X)$$
where the expression for ${\mc L}$ is exactly as in \eqref{e:gensurf}. 

Then we define $\MeaSp={\mc M}(\overline\Sigma)\setminus\{0\}$. Remark that $\overline\Sigma$ is compact, and the class of test functions 
$$\{f\in C^\infty(\overline\Sigma): \partial_n f=0{\rm\ on\ }\partial\Sigma\}$$
used in the definition of $\TestSp$ is dense in $C_0(\overline\Sigma)$, so that Lemma \ref{lem:dense} also holds in the present set-up.

In conclusion we have:

\begin{Thm} \label{thm:surface}
Let $(\Sigma,g_0)$ be a bordered Riemannian surface. For $ \gamma<\gamma_{L^1} = 2$,
 there exists a Markov diffusion process
  $\mathbf A=\{\Omega,\mc F, (A_t)_{t\ge 0}, (P_z)_{z\in\MeaSp} \}$ on the space $\MeaSp$, such that for any smooth function $f$  and quasi-every $z\in \MeaSp$, $A_t(f)$ satisfies the  SDE \eqref{e:Ricci-Af-surf}
with $A_0(f)=z(f)$,
where $\forall t>0$, $\phi_t=\mathbf M^{-1} A_t$ a.s. and
$\beta^f $ is a  one-dimensional standard Brownian motion.
 \end{Thm}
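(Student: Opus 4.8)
The plan is to reduce the proof of Theorem~\ref{thm:surface} to the machinery already assembled for the torus in Sections~\ref{sec:IBP}--\ref{sec:Fukushima}, invoking it \emph{mutatis mutandis} once the curvature and boundary modifications have been put in place. Concretely, one constructs the Dirichlet form
\[
{\mc E}(F,G)=\int\langle DF(X),DG(X)\rangle_{L^2(M_X)}\,d\nu_{g_0}(X)
\]
on $L^2(\MeaSp,\m)$, where $\MeaSp={\mc M}(\overline\Sigma)\setminus\{0\}$ and $\m=\mathbf M_*\nu_{g_0}$, and then shows it is a regular, strongly local Dirichlet form, so that the general theory produces an $\m$-symmetric diffusion whose Fukushima decomposition yields the one-dimensional projections \eqref{e:Ricci-Af-surf}.

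First I would verify the integration-by-parts formula \eqref{e:IBPsurface}, which is the analogue of Theorem~\ref{thm:LIBP}: starting from the Cameron-Martin formula \eqref{eq:CMgen} for the (Neumann, in the bordered case) GFF base measure, differentiate the shifted action $S_L(g_0,X+th)$ at $t=0$ exactly as in Lemma~\ref{lem:GIBP}, picking up the quadratic gradient term, the curvature term $QK_0h$ (which was invisible on the flat torus since $K_0=0$), and the cosmological term $\mu\gamma^2 M_X(h)$ from differentiating $e^{-\mu\int e^{\gamma X}\area_0}$ via the shift covariance $dM_{X+th}=e^{\gamma t h}dM_X$. The necessary $\sigma$-finiteness and integrability bounds are exactly those of Lemma~\ref{Lem:sigmafin}, which carry over since, as remarked in the excerpt, one has the same local moment estimates for the GMC on a surface (the field restricted to small balls is absolutely continuous w.r.t.\ a planar GFF). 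Running the computation of Lemma~\ref{lem:DDform} in reverse, as displayed in the excerpt, then identifies the generator \eqref{e:gensurf} and the associated closable form; closability follows verbatim from the bound $|{\mc E}(F_n,G)|\le\|F_n\|_{L^2(\m)}\|{\mc L}\tilde G\|_{L^2(\nu)}$ together with ${\mc L}\tilde G\in L^p(\nu)$ for all $p<\infty$.

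Next I would establish that $\EE$ is regular, strongly local, and Markovian, so that Proposition~\ref{prop:existsA} applies. Regularity requires a core, which is supplied by showing $\CX$ is dense in $C_0(\MeaSp)$ via Stone--Weierstrass and dense in $L^2(\MeaSp,\m)$ via Urysohn and inner/outer regularity of $\m$ --- here the key point is that $\overline\Sigma$ is compact and the Neumann test functions $\{f\in C^\infty(\overline\Sigma):\partial_n f=0\}$ are dense in $C_0(\overline\Sigma)$, so Lemma~\ref{lem:dense} goes through unchanged. Markovianity, locality, and strong locality follow by the same cutoff-function and support-separation arguments as in Proposition~\ref{prop:existsA}, since the gradient formula \eqref{e:DFX} has the same structure as \eqref{e:gradCX}. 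Finally, the Fukushima decomposition of $Y^{[F]}_t=F(A_t)-F(A_0)$ into a martingale part with quadratic variation given (via the Revuz correspondence and Lemma~\ref{lem:Quad-M}) by $2\gamma^2\int_0^t\|DF(A_s)\|^2_{L^2(A_s)}ds$ and a zero-energy part equal to $\int_0^t{\mc L}F(A_s)ds$ (Lemma~\ref{lem:identify-N}) yields the SDE \eqref{e:Ricci-Af-surf} for $F(A)=A(f)$, exactly as in the proof of Theorem~\ref{thm:main}.

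The genuinely new obstacle, and the step I expect to require the most care, is confirming that the \emph{quantum} value $Q=\tfrac2\gamma+\tfrac\gamma2$ enters correctly through the conformal anomaly \eqref{eq:anomscal}--\eqref{e:confanom}, so that the pushforward measure $\m$ is independent of the reference metric $g_0$ up to a multiplicative constant; this is what makes $\EE$ intrinsic and is precisely the point where the na\"ive derivation \eqref{Eq:guess} fails, as explained in Remark~\ref{rem:Qdyn}. One must check that the anomalous scaling exponent $2+\gamma^2/2=Q\gamma$ of the GMC is exactly compensated by the curvature coupling $2QK_0X$ in the action, so that $\nu_{g_0}$ transforms consistently and \eqref{e:IBPsurface} produces the coefficient $Q\gamma/2$ rather than $1$ on the curvature term. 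Everything else is a routine transcription of the torus arguments; the bordered case adds only the verification that the Neumann b.c.\ is preserved under conformal change near the boundary (automatic since $\psi_0$ has Neumann b.c.\ by \cite[(1.14)]{OPS}) and that the boundary components, being geodesic in the chosen metric, contribute no extra geodesic-curvature boundary term to the integration by parts.
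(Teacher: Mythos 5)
Your proposal follows essentially the same route as the paper: the paper likewise takes the Dirichlet form $\int\|DF\|^2_{L^2(M_X)}d\nu_{g_0}$ as the starting point, derives the generator \eqref{e:gensurf} by running Lemma~\ref{lem:DDform} in reverse using the surface integration by parts \eqref{e:IBPsurface} (obtained from the Cameron--Martin formula \eqref{eq:CMgen} as in Theorem~\ref{thm:LIBP}, with a Neumann GFF and Neumann test functions in the bordered case, and citing \cite{guillarmou2016polyakov} for the anomaly/quantum-$Q$ facts), and then invokes the machinery of Section~\ref{sec:solution-Dirichlet} unchanged --- closability, regularity via density of $\{f\in C^\infty(\overline\Sigma):\partial_n f=0\}$ in $C_0(\overline\Sigma)$, strong locality, and the Fukushima decomposition. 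The only caveat is notational: your quadratic-variation formula $2\gamma^2\int_0^t\|DF(A_s)\|^2_{L^2(A_s)}\,ds$ is correct only if $DF$ is normalized without the factor $\gamma$ (as in \eqref{e:gradCX}); with the convention \eqref{e:DFX} the prefactor should be $2$, though either way one gets $2\gamma^2\int_0^tA_s(f^2)\,ds$ for $F(A)=A(f)$, matching \eqref{e:Ricci-Af-surf}.
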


In particular for $f\equiv 1$ (taking into account Gauss-Bonnet: $\int_\Sigma K_0 \area_0=2\pi\chi$, where $\chi$ is the Euler characteristics of $\Sigma$), we see that the total volume $A_t(1)$ evolves as:
\begin{align}\label{eq:totalmassSDE}
dA_t(1)=\gamma\sqrt 2\sqrt{A_t(1)}d\beta_t-\mu\gamma^2 A_t(1)dt-Q\gamma\chi dt \;,
\end{align}
where $\beta$ is a one-dimensional Brownian motion. We will discuss long-term behavior in the more general set-up of Section \ref{Sec:insert}.

\subsection{Insertions}\label{Sec:insert}

In the context of LCFT, it is natural to consider insertions of ``vertex operators". Here we highlight the modifications needed to incorporate insertions in the SRF framework.

As before, $\Sigma$ is a (bordered) Riemann surface; additionally, we assign real weights $\alpha_1,\dots,\alpha_k$ to marked points $x_1,\dots,x_k$ in the bulk (i.e., in the interior; one could also consider boundary vertex insertions, which we refrain from for brevity). Associated to this data, we consider the formal dynamics 
(which is \eqref{e:Ricci-A-surf} with an additional term)
\begin{equation}\label{e:Ricci-A-inser}
\partial_t A_t =\frac{\gamma}{2\pi}\Lap X \area_0-\frac{Q\gamma}{2\pi}K_0 \area_0-\mu\gamma^2 A_t
+\gamma\sum_{i=1}^k\alpha_i\delta_{x_i}
+\gamma\sqrt 2\xi_g A_t
\end{equation}
and the corresponding 1d dynamics for $f$ a smooth function on $\Sigma$:
\begin{equs}[e:dA-vertex]
d A_t(f) & =\left(\frac{\gamma}{2\pi} \area_0( f\Lap X)  
 -\frac{Q\gamma}{2\pi} \area_0( fK_0)
-\mu\gamma^2 A_t( f)\right)dt
\\
& \qquad 
+\gamma\sum_{i=1}^k\alpha_if(x_i)dt
+\gamma\sqrt 2\left(A_t( f^2)\right)^{1/2}d\beta^f_t  \;.
\end{equs}

For a reference metric $g_0$ on $\Sigma$, consider the measure on fields, written for now formally as
$$d\nu_{g_0}^\alpha(X)=\left(\prod_{i=1}^k\Wick{e^{\alpha_iX(x_i)}}_{g_0}\right)d\nu_{g_0}(X)\;.$$
Note that the term in brackets is not a Radon-Nikod\'ym derivative. Admit for now the conformal anomaly formula
$$\int F(X)d\nu_{\hat g_0}^\alpha(X)\propto\int F(X-Q\psi_0)d\nu_{g_0}^\alpha(X)$$
(here $\hat g_0=e^{2\psi_0}g_0$; the coefficient of proportionality depends on the $\alpha_i$'s), and the integration-by-parts formula
\begin{equs}\label{e:IBP-vertex}
\int &D_hF(X)d\nu_{g_0}^\alpha(X) 
\\
&=\int F(X)\bigg(\frac{1}{2\pi} \langle  \nabla_0 h, \nabla_0 X\rangle_{g_0}
+\frac{Q}{2\pi} \area_0( K_0h)+\mu\gamma M_X( h) -\sum_i\alpha_ih(x_i)\bigg)d\nu_{g_0}^\alpha(X)\;.
\end{equs}
Given this, one can consider the Dirichlet form
$${\mc E}(F,F)=\int \|DF\|^2_{L^2(A)}d\nu_{g_0}^\alpha=\int F(-{\mc L}F)d\nu^\alpha_{g_0}(X)
$$
where 
\begin{equs}
{\mc L}F&=\gamma^2\sum_{i,j}\partial^2_{ij} q A( f_if_j )
\\
&+\sum_i \partial_i q \bigg( \frac\gamma{2\pi} \area_0 (f_i\Lap X)
-\frac{Q\gamma}{2\pi} \area_0 (f_i K_0)
-\mu\gamma^2 A (f_i)+\gamma\sum_j\alpha_jf_i(x_j)\bigg)
\end{equs}
which realizes the desired dynamics \eqref{e:Ricci-A-inser}.

We refer to \cite{guillarmou2016polyakov} for a construction of $\nu^\alpha_{g_0}$ with the desired properties (viz. anomaly and integration by parts); concretely, it can be realized as a vague limit of measures absolutely continuous w.r.t. $\nu_{g_0}$:
$$d\nu_{g_0}^\alpha(X)
=\lim_{\eps\searrow 0} \bigg(\prod_i\eps^{\frac{\alpha_i^2}2}e^{\alpha_iX_\eps(x_i)}\bigg)d\nu_{g_0}(X)$$
where $X_\eps$ denotes a mollification of $X$ on scale $\eps$ and $\nu_{g_0}$ is defined as in Section \ref{sec:surfaces}. This requires the following local Seiberg bound:
$$\alpha_i<Q$$
for $i=1,\dots, k$ (remark that here we are not concerned with finiteness of $\nu_{g_0}^\alpha$).  

Therefore, we have all the ingredients as input to the theory of Dirichlet forms.
By repeating the same arguments as in Section~\ref{sec:solution-Dirichlet}
and Section~\ref{sec:surfaces}, we have:

 \begin{Thm} \label{thm:vertex}
Let $(\Sigma,g_0)$ be a bordered Riemannian surface with marked points $x_1,\dots,x_k$ in the bulk and weights $\alpha_1,\dots,\alpha_k<Q$. For $ \gamma<\gamma_{L^1} = 2$,
 there exists a Markov diffusion process
  $\mathbf A=\{\Omega,\mc F, (A_t)_{t\ge 0}, (P_z)_{z\in\MeaSp} \}$ on the space $\MeaSp$, such that for any smooth function $f$  and quasi-every $z\in \MeaSp$, $A_t(f)$ satisfies the  SDE \eqref{e:dA-vertex}
with $A_0(f)=z(f)$,
where $\forall t>0$, $\phi_t=\mathbf M^{-1} A_t$ a.s. and
$\beta^f $ is a  one-dimensional standard Brownian motion.
 \end{Thm}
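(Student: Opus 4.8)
The plan is to feed the objects assembled just above into the three-step Dirichlet-form program of Section~\ref{sec:solution-Dirichlet}, exactly as in the proof of Theorem~\ref{thm:surface}. The inputs I take as given are the insertion measure $\nu^\alpha_{g_0}$ of \cite{guillarmou2016polyakov}, its conformal anomaly, and the integration-by-parts formula \eqref{e:IBP-vertex}; I write $\m$ for the pushforward of $\nu^\alpha_{g_0}$ under $\mathbf M:X\mapsto M_X$. The displayed generator $\mc L$ is tailored so that, running the computation of Lemma~\ref{lem:DDform} in reverse with \eqref{e:IBP-vertex} replacing \eqref{e:IBPsurface}, one obtains on test functionals $F,G\in\CX$ the identity ${\mc E}(F,G)=\int\langle DF,DG\rangle_{L^2(M_X)}\,d\nu^\alpha_{g_0}=\int F(-{\mc L}G)\,d\nu^\alpha_{g_0}$; here the extra drift $\gamma\sum_j\alpha_j f_i(x_j)$ in $\mc L$ is exactly the contribution of the insertion term $-\sum_i\alpha_i h(x_i)$ on the right of \eqref{e:IBP-vertex}.

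The one genuinely new step is the integrability input, namely the analogue of Lemma~\ref{Lem:sigmafin}: that $\nu^\alpha_{g_0}$ is $\sigma$-finite and that, for smooth $f$, the functional $\langle f,\Lap X\rangle\,\ind_{[\eps,\eps^{-1}]}(M_X(\overline\Sigma))$ lies in every $L^p(\nu^\alpha_{g_0})$ along with the exponential-moment bound analogous to \eqref{e:exp-mom}. Because the insertion $\prod_i\Wick{e^{\alpha_i X(x_i)}}$ is singular and \emph{not} a Radon--Nikod\'ym derivative, these estimates cannot be quoted from the torus case. I would instead use the vague-limit realization recalled above, equivalently a Cameron--Martin shift of the field by $\sum_i\alpha_i G_{g_0}(x_i,\cdot)$ with $G_{g_0}$ the Green kernel of $g_0$, after which $\nu^\alpha_{g_0}$ becomes absolutely continuous with respect to $\nu_{g_0}$ while the underlying GMC picks up the singular weight $\prod_i\dist_{g_0}(\cdot,x_i)^{-\gamma\alpha_i}$ near the marked points. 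The local Seiberg bound $\alpha_i<Q$ is precisely the condition under which this weighted GMC still gives finite mass to a neighborhood of each $x_i$, so that the positive and negative moment bounds for the total mass $M_X(\overline\Sigma)$ employed in Lemma~\ref{Lem:sigmafin} (via \cite{MR2642887}) survive the insertions; the remaining $L^p$ bounds then follow from Cauchy--Schwarz as before.

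Granting this, the rest transfers \emph{mutatis mutandis}. Closability is immediate from $|\EE(F_n,G)|\le\|F_n\|_{L^2(\MeaSp,\m)}\,\|{\mc L}\tilde G\|_{L^2(\nu^\alpha_{g_0})}$ together with ${\mc L}\tilde G\in L^2(\nu^\alpha_{g_0})$. Regularity and the Markovian property depend only on the algebra structure of $\CX$, the density of $\CX$ in $C_0(\MeaSp)$ and $L^2(\MeaSp,\m)$ (Lemma~\ref{lem:dense}, valid here since the Neumann test functions are dense in $C_0(\overline\Sigma)$ and $\CX$ separates points), and on cutoff functions composed with $q$; none of these see the insertions, so Propositions~\ref{prop:regular} and \ref{prop:existsA} give an $\m$-symmetric Hunt process which is a diffusion by the same locality and strong-locality (disjoint-support) argument. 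Finally the continuous additive functional $Y^{[F]}=F(A_t)-F(A_0)$ decomposes as $M^{[F]}+N^{[F]}$ by \cite[Theorem~5.2.2]{Fukushima2011}; as in Lemmas~\ref{lem:Quad-M}--\ref{lem:identify-M} the Revuz measure of $\langle M^{[F]}\rangle$ is $2\|DF\|^2_{L^2(A)}\,d\m$, which for $F=A(f)$ (so that $DF=\gamma f$ by \eqref{e:DFX}) yields $\langle M^{[F]}\rangle_t=2\gamma^2\int_0^t A_s(f^2)\,ds$ and hence the martingale $\sqrt2\,\gamma\int_0^t(A_s(f^2))^{1/2}\,d\beta^f_s$, while as in Lemma~\ref{lem:identify-N} the zero-energy part is $N^{[F]}_t=\int_0^t{\mc L}F(A_s)\,ds$, reproducing for $F=A(f)$ the full drift of \eqref{e:dA-vertex} including the insertion term $\gamma\sum_j\alpha_j f(x_j)$.

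The main obstacle is the integrability step: since the vertex operators are singular, the finiteness of $\nu^\alpha_{g_0}$ and the moment bounds underpinning every later step must be re-derived through the shifted, locally singular GMC rather than inherited from the flat estimates, and it is exactly here that $\alpha_i<Q$ is indispensable---for $\alpha_i\ge Q$ the shifted measure charges the marked points too heavily and these bounds fail.
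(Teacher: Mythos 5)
Your proposal follows essentially the same route as the paper: the paper's own proof of Theorem~\ref{thm:vertex} consists precisely of taking the construction of $\nu^\alpha_{g_0}$ from \cite{guillarmou2016polyakov} (conformal anomaly and the integration by parts \eqref{e:IBP-vertex}, available under the local Seiberg bound $\alpha_i<Q$) as input and then repeating the Dirichlet-form machinery of Sections~\ref{sec:solution-Dirichlet} and~\ref{sec:surfaces} \emph{mutatis mutandis}, which is exactly your architecture, and your Girsanov-shift discussion of the integrability step actually supplies more detail than the paper records. One caveat worth fixing in your middle step: for $\alpha_i\in[2/\gamma,Q)$ the weighted total mass has infinite mean, so the positive-moment bound of \cite{MR2642887} ``for some $p>1$'' does \emph{not} survive the insertions as stated; what survives under the Seiberg bound (and is all the Cauchy--Schwarz summation in the analogue of Lemma~\ref{Lem:sigmafin} needs) is a power-law tail for the weighted mass with some positive exponent, namely moments of order $p<\min\bigl(4/\gamma^2,\tfrac{2}{\gamma}(Q-\max_i\alpha_i)\bigr)$, together with all negative moments.
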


The total mass $(A_t(1))$ satisfies an SDE with generator
$$\gamma^2x\partial_{xx}-\mu\gamma^2x\partial_x+\gamma(\bar\alpha-Q\chi)\partial_x$$
where $\bar\alpha=\sum_i\alpha_i$. We now briefly explain how to analyze the long-time behavior of that process, a problem in classical one-dimensional diffusions.

The term with $\mu$ does not change qualitatively the behavior at 0, by change of measure. Setting $\mu=0$, the generator is proportional to that of a BESQ$(\delta)$, viz. $2x\partial_{xx}+\delta\partial_x$, where
$$\delta=\frac{2}\gamma(\bar\alpha-Q\chi)\;.$$
If $\delta\geq 2$, the process $(A_t(1))$ does not hit 0 (but can be started from 0). If $\delta\in (0,2)$, the process $(A_t(1))$ hits 0, but can be continued. If $\delta\leq 0$, $0$ is absorbing. 

Recall that the global Seiberg bound is $\bar\alpha- Q\chi>0$, which is precisely $\delta>0$; together with the local Seiberg bounds ($\alpha_i<Q$), it ensures finiteness of $\nu^\alpha$, see \cite{guillarmou2016polyakov}.
Since $0\notin\MeaSp$ by definition, the SRF is by construction absorbed at 0. The problem of continuation of the SRF $(A_t)$ past its first hitting time of zero dominates the corresponding problem for the one-dimensional total mass process $(A_t(1))$. However, it seems plaudible that the condition is the same, i.e. that if $\delta\in (0,2)$, the SRF can be extended to a process on $\MeaSp\cup\{0\}$, with infinite lifetime (i.e. conservative), see Questions 5-6 below.

\section{Questions and open problems}\label{sec:open-probs}

\paragraph{Regularity.} In the 2d Stochastic Heat Equation $\partial_t\phi_t=\Delta\phi_t+\xi$, the solution can be realized as an element of $C([0,T],H^{-s})$ for any $s>0$, i.e. $t\mapsto\phi_t$ is a.s. continuous w.r.t. a Banach space topology.

For a Stochastic Ricci Flow $(\phi_t,A_t)$, we know that the second marginal $t\mapsto A_t$ evolves a.s. continuously w.r.t. to the weak topology on $\MeaSp$. 

{\bf Question 1.} Strenghten the regularity of the second marginal, e.g. show that $t\mapsto A_t$ is continuous a.s. in a Besov space topology. 
Note that \cite{MR3531686} has proved a H\"older continuity result for the GMC $M_\gamma$ with $\gamma\in [0,2)$ on the torus: for any $\eps>0$, almost surely there is a (random) constant $C$ depending on $\eps$ and the Gaussian free field, such that $M_\gamma(B(x,r))\le C r^{\alpha-\eps}$ with $\alpha=2(1-\frac{\gamma}{2})^2$ for any ball $B(x,r)$.
We also note that the space-time regularity  (with respect to a parabolic distance) of the GMC $\M$ when the field $\phi$ evolves according to stochastic heat equation  is obtained  in \cite{Garban2018}.

{\bf Question 2.} Is the first marginal a.s. continuous w.r.t. an abstract Wiener space topology ? (e.g., $H^{-s}$ for some $s$)

\paragraph{Feller property.}

The Dirichlet form formalism provides a family of probability measures on path space $(P_z)_{z\in\MeaSp}$ indexed by starting state $z\in\MeaSp$;
this family is uniquely defined except possibly on an exceptional set of starting states (see Theorem 4.2.7 in \cite{Fukushima2011}). This exceptional set is in particular $\m$-negligible.

{\bf Question 3.} Define $P_z$ unambiguously for all $z\in\MeaSp$.

This can be thought of as an entrance problem. Concretely, if $(U_n)$ is a sequence of shrinking neighborhoods of $z\in\MeaSp$, one would expect $(P_{\m(\cdot|U_n)})_n$ to converge weakly as $n\rightarrow\infty$ (w.r.t. to the Skorohod space topology on $C([0,\infty),\MeaSp\cup\{0\})$).

{\bf Question 4.} Is the Stochastic Ricci Flow strong Feller ? (w.r.t. to a topology on $A$ or $(\phi,A)$)

Here it may be useful to have more explicit control on the map ${\mathbf M}^{-1}$ (see \eqref{e:mapM}).

\paragraph{Entrance and reflection.}

It follows from strong locality (proof of Proposition \ref{prop:existsA}) that, on the event that the lifetime $\zeta$ is finite, the total mass $(A_t(1))$ goes to $0$ or $\infty$ as $t\nearrow\zeta$. The latter is ruled out by the autonomous SDE satisfied by the mass (see \eqref{eq:totalmassSDE}), and the former happens a.s. iff $\delta=\frac 2\gamma(\bar\alpha-Q\chi)<2$ (see the discussion at the end of Section \ref{Sec:insert}). By construction (i.e., the choice of state space and core), the process is absorbed at 0 (and hence not conservative if it hits zero in finite time). The symmetrizing measure $\m$ is invariant for the semigroup if the semigroup is conservative. By comparison of the total mass process with BESQ$(\delta)$, it is natural to ask:

{\bf Question 5.} In the case $\delta\geq 2$, is 0 an entrance boundary ? viz., can the SRF be extended to a Feller process on $\MeaSp\cup\{0\}$, that a.s. never returns to 0 ?

{\bf Question 6.} In the case $\delta\in (0,2)$, can the SRF be extended to a process reflected at 0 ? i.e. a conservative process such $\{t:A_t(1)=0\}$ has a.s. zero Lebesgue measure.

If one starts with $\bar\MeaSp:=\MeaSp\cup\{0\}$ as state space, the difficulty is to define a core ${\mc C}_{\bar\MeaSp}$ dense in $C_0(\bar\MeaSp)$, such that integration by parts \eqref{e:DDform} still holds. 

\paragraph{Approximation schemes.}

Fix two small parameters $\delta,\eps$. Based on \eqref{eq:smoothSHE}, it is natural to consider the following scheme: on the time interval $t=[k\delta,(k+1)\delta)$, solve the linear SHE with smooth coefficients (here in the torus case)
$$\partial_t\phi_t=\eps^{\alpha}e^{-2\phi^\eps_{k\delta}}\Delta_0\phi_t+\sigma \eps^\beta e^{-\phi^\eps_{k\delta}}\xi_0$$
Here $\phi^\eps$ denotes an $\eps$-mollification of $\phi$. Remark that, if $\phi_0$ is absolutely continuous w.r.t. to the GFF \eqref{eq:pathint}, then for all $t$, $\phi_t$ is also absolutely continuous w.r.t. the same GFF. This is a natural analogue of frozen coefficients approximations for SDEs.

{\bf Question 7.} Does this scheme converge to SRF as $(\delta,\eps)\rightarrow (0,0)$ in some way, for suitable renormalization exponents $(\alpha,\beta)$ ?

Showing directly the convergence of such a scheme could provide an alternative proof of existence and shed some light on the previous regularity questions. 

{\bf Question 8.} Find a Wong-Zakai approximation for small $\sigma$.

Here one considers $\xi^\eps$, a space-time $\eps$-mollification of the white noise $\xi$; then one solves classically
$$\partial_t\phi^\eps=\eps^{\alpha}e^{-2\phi^\eps}\Delta\phi -\lambda + \eps^{\beta} \sigma e^{-\phi^\eps} \xi^\eps+({\rm counterterms})$$
and attempts to take a limit in probability as $\eps\searrow 0$, for suitable normalization exponents $(\alpha,\beta)$ and - possibly - counterterms.

Another direction would be to consider scaling limits of natural dynamics on discretizations of LQG, such as random maps. In that case, dynamics such as triangle flipping in random triangulations have been proposed. However a feature of the SRF is that the complex structure is preserved, and it is unclear which discrete dynamics can be expected to have this property, even in the limit.

\paragraph{Strong solutions.}
It is not immediately apparent how to phrase a notion of strong solutions for SRF. The previous approximation schemes (for fixed $\eps>0$) are measurable with respect to a fixed white noise $\xi_0$.

{\bf Question 9.} Show almost sure convergence of an approximation scheme, for a fixed realization of $\xi_0$.

Formally, the SRF in terms of $\phi$ \eqref{e:SRicci} is a quasilinear singular stochastic PDE. Strong solution theories for such quasilinear  stochastic PDEs are under rapid progress. In \cite{otto2018quasilinear}, strong solutions to 
equations of the form (up to technical subtleties such as a mean-zero component projection therein) 
$\partial_t u = a(u) \partial_x^2 u + \sigma(u) f$ for random forcing $f\in C^{\alpha-2}$ with $\alpha>\frac23$ are constructed using controlled rough paths theory. 
Here $C^\alpha$ is a space-time H\"older regularity defined with respect to a parabolic distance, see  \cite[Section~2]{otto2018quasilinear}.
The solution lies in the space $C^\alpha$ and  is the limit of a sequence of suitably renormalized equations driven by smooth mollified noises (that is, $f$ convolved with smooth mollifiers).
Note that $\alpha=1$ is the borderline where  the products $a(u) \cdot \partial_x^2 u$ and  $\sigma(u) \cdot f$  fail to have a classical meaning.

The key idea that allows \cite{otto2018quasilinear} to generalize the strong solution theories such as \cite{Regularity} that was originally applied to study semilinear equations is a parametric ansatz; one builds solution to a  family of linear equations $\partial_t v  = a_0 \partial_x^2 v + f$ parametrized by constants $a_0$, as well as higher order terms $vf$ and $v\partial_x^2 v$.
The input $(v,vf,v\partial_x^2 v)$, once constructed by stochastic methods, is sufficient to render a PDE theory as long as $\alpha>\frac23$,
because the ``error'' of replacing $a(u)$ or $\sigma(u)$ by $v$ is order $2\alpha$ and $2\alpha + (\alpha-2)>0$ is the key condition for PDE estimates.
Similar results have been obtained by 
\cite{MR3916943,MR3916262} also for $\alpha>\frac23$, but using the para-controlled approach (originally developed in \cite{MR3406823}).

The work by \cite{GerencserQuasilinear}  then generalized the above results by building a framework for 
 construction of local renormalized solutions to general quasilinear stochastic PDEs within the theory of regularity structures. It exploited  a series  of existing results developed for the semilinear case such as \cite{bruned2016algebraic,bruned2017renormalising,chandra2016analytic} so that it only requires a small number of additional arguments to extend to the quasilinear setting.
 As applications an equation of the form
 $\partial_t u = a(u) \partial_x^2 u + F(u)(\partial_x u)^2+ \sigma(u) f$ is considered where $f\in C^{\alpha-2}$ with $\alpha >\frac12$. There is also \cite{OttoSauerSmithWeber} under a twisted version of regularity structure framework which works for $\alpha >\frac12$.
With extra work, one may expect to push the regularity down to $\alpha>\frac25$ by building more ``perturbative'' information so that $4\alpha + (\alpha-2)>0$.
But this would eventually cease to work at $\alpha=0$
and the SRF should be as singular as the two-dimensional GFF, i.e. $\alpha<0$. Note that spatial dimension is two for SRF here, but the obstacle here is regularity rather than dimension (some of the aforementioned papers work or can be adapted to more than one dimension.)

\vspace{1ex}

Alternatively, one can attempt to construct a solution as a small noise expansion (e.g. in powers of $\sigma$); the terms in the expansion are then measurable with respect to a standard white noise $\xi_0$.

Note that Takhtajan \cite{MR2249798} defines Liouville CFT via perturbative expansion (in Planck constant) around the classical solution $\phi_{cl}$ to the Liouville equation. It is not clear to us how to ``translate'' his works to the dynamic setting.  However, here are some thoughts. Consider again \eqref{e:SRicci}.
We can try to write the solution $\phi$ as a series in $\sigma$:
\[
\phi= \sum_{i=0}^\infty \sigma^i \phi_i \;.
\]
We can show that  each $\phi_i $ satisfies an equation that is {\it linear} in $\phi_i$, and only depending on the $\phi_j$ ($j<i$). In particular,
\begin{equ}
\partial_t\phi_0 
=e^{-2\phi_0}\Delta\phi_0 -\lambda
\end{equ}
which is the classical Ricci flow, and a stationary solution (set $\partial_t\phi_0 =0$) is the  solution to the classical Liouville equation. Also,
\begin{equs}
\partial_t\phi_1 
&=e^{-2\phi_0}\Delta\phi_1 - 2 e^{-2\phi_0} \phi_1 \Lap \phi_0 +e^{-\phi_0} \xi_0
\\
\partial_t\phi_2 
&= e^{-2\phi_0}\Delta\phi_2
+2 e^{-2\phi_0} (\phi_1^2 - \phi_2) \Lap \phi_0 - 2 e^{-2\phi_0} \phi_1 \Lap\phi_1 
- e^{-\phi_0} \phi_1  \xi \;.
\end{equs}
Note  that  the second order operator in each of the equations is
$\partial_t - e^{-2\phi_0}\Delta $.
This seems  close to the spirit of Takhtajan  \cite{MR2249798}  who takes $(e^{-2\phi_{cl}}\Delta+m^2)^{-1}$ as the ``free propagator''
in his Feynman diagram expansion.

{\bf Question 10.} Can one define a series solution via small noise expansions?

\bibliographystyle{alpha}
\bibliography{refs}

\def\polhk#1{\setbox0=\hbox{#1}{\ooalign{\hidewidth
  \lower1.5ex\hbox{`}\hidewidth\crcr\unhbox0}}}
\begin{thebibliography}{DKRV16}

\bibitem[AHK74]{AlbeverioHK74}
Sergio Albeverio and Raphael H{\o}egh-Krohn.
\newblock The {W}ightman axioms and the mass gap for strong interactions of
  exponential type in two-dimensional space-time.
\newblock {\em J. Functional Analysis}, 16:39--82, 1974.

\bibitem[AR91]{AlbRock91}
S.~Albeverio and M.~R{\"o}ckner.
\newblock Stochastic differential equations in infinite dimensions: solutions
  via {D}irichlet forms.
\newblock {\em Probab. Theory Related Fields}, 89(3):347--386, 1991.

\bibitem[BCCH17]{bruned2017renormalising}
Yvain Bruned, Ajay Chandra, Ilya Chevyrev, and Martin Hairer.
\newblock Renormalising {SPDE}s in regularity structures.
\newblock {\em arXiv:1711.10239}, 2017.

\bibitem[BDH19]{MR3916262}
I.~Bailleul, A.~Debussche, and M.~Hofmanov\'{a}.
\newblock Quasilinear generalized parabolic {A}nderson model equation.
\newblock {\em Stoch. Partial Differ. Equ. Anal. Comput.}, 7(1):40--63, 2019.

\bibitem[BG97]{BG}
Lorenzo Bertini and Giambattista Giacomin.
\newblock Stochastic {B}urgers and {KPZ} equations from particle systems.
\newblock {\em Comm. Math. Phys.}, 183(3):571--607, 1997.

\bibitem[BGHZ19]{bruned2019geometric}
Yvain Bruned, Franck Gabriel, Martin Hairer, and Lorenzo Zambotti.
\newblock Geometric stochastic heat equations.
\newblock {\em arXiv:1902.02884}, 2019.

\bibitem[BHZ16]{bruned2016algebraic}
Yvain Bruned, Martin Hairer, and Lorenzo Zambotti.
\newblock Algebraic renormalisation of regularity structures.
\newblock {\em Inventiones mathematicae}, pages 1--118, 2016.

\bibitem[BSS14]{BerestyckiSheffieldSun}
Nathana{\"e}l Berestycki, Scott Sheffield, and Xin Sun.
\newblock Equivalence of {L}iouville measure and {G}aussian free field.
\newblock {\em arXiv:1410.5407}, 2014.

\bibitem[Cal12]{CalCal}
Simone Calamai.
\newblock The {C}alabi metric for the space of {K}\"{a}hler metrics.
\newblock {\em Math. Ann.}, 353(2):373--402, 2012.

\bibitem[CC18]{CC}
R\'{e}mi Catellier and Khalil Chouk.
\newblock Paracontrolled distributions and the 3-dimensional stochastic
  quantization equation.
\newblock {\em Ann. Probab.}, 46(5):2621--2679, 2018.

\bibitem[CCHS20]{YM2020}
Ajay Chandra, Ilya Chevyrev, Martin Hairer, and Hao Shen.
\newblock Langevin dynamic for the {2D Yang-Mills} measure.
\newblock {\em arXiv preprint arXiv:2006.04987}, 2020.

\bibitem[CH16]{chandra2016analytic}
Ajay Chandra and Martin Hairer.
\newblock An analytic {BPHZ} theorem for regularity structures.
\newblock {\em arXiv:1612.08138}, 2016.

\bibitem[Cho91]{MR1094458}
Bennett Chow.
\newblock The {R}icci flow on the {$2$}-sphere.
\newblock {\em J. Differential Geom.}, 33(2):325--334, 1991.

\bibitem[CHS18]{SG8pi}
Ajay Chandra, Martin Hairer, and Hao Shen.
\newblock The dynamical sine-{G}ordon model in the full subcritical regime.
\newblock {\em arXiv:1808.02594}, 2018.

\bibitem[CS20]{Corwin2020BAMS}
Ivan Corwin and Hao Shen.
\newblock Some recent progress in singular stochastic partial differential
  equations.
\newblock {\em Bulletin of the American Mathematical Society}, 57(3):409--454,
  2020.

\bibitem[CWZZ18]{chen2018stochastic}
Xin Chen, Bo~Wu, Rongchan Zhu, and Xiangchan Zhu.
\newblock Stochastic heat equations for infinite strings with values in a
  manifold.
\newblock {\em arXiv:1812.01942}, 2018.

\bibitem[Dav88]{David_2DG}
F.~David.
\newblock Conformal field theories coupled to {$2$}-{D} gravity in the
  conformal gauge.
\newblock {\em Modern Phys. Lett. A}, 3(17):1651--1656, 1988.

\bibitem[DK89]{DisKaw}
Jacques Distler and Hikaru Kawai.
\newblock Conformal field theory and {$2$}{D} quantum gravity.
\newblock {\em Nuclear Phys. B}, 321(2):509--527, 1989.

\bibitem[DKRV16]{MR3465434}
Fran\c{c}ois David, Antti Kupiainen, R\'{e}mi Rhodes, and Vincent Vargas.
\newblock Liouville quantum gravity on the {R}iemann sphere.
\newblock {\em Comm. Math. Phys.}, 342(3):869--907, 2016.

\bibitem[DRV16]{MR3450564}
Fran\c{c}ois David, R\'{e}mi Rhodes, and Vincent Vargas.
\newblock Liouville quantum gravity on complex tori.
\newblock {\em J. Math. Phys.}, 57(2):022302, 25, 2016.

\bibitem[DS11]{MR2819163}
Bertrand Duplantier and Scott Sheffield.
\newblock Liouville quantum gravity and {KPZ}.
\newblock {\em Invent. Math.}, 185(2):333--393, 2011.

\bibitem[Dub09]{Dub_SLEGFF}
Julien Dub\'{e}dat.
\newblock S{LE} and the free field: partition functions and couplings.
\newblock {\em J. Amer. Math. Soc.}, 22(4):995--1054, 2009.

\bibitem[Eps75]{Epstein}
D.B.A. Epstein.
\newblock Natural tensors on {R}iemannian manifolds.
\newblock {\em Journal of Differential Geometry}, 10(4):631--645, 1975.

\bibitem[ES64]{eells1964harmonic}
James Eells and Joseph~H Sampson.
\newblock Harmonic mappings of {R}iemannian manifolds.
\newblock {\em American journal of mathematics}, 86(1):109--160, 1964.

\bibitem[FG19]{MR3916943}
Marco Furlan and Massimiliano Gubinelli.
\newblock Paracontrolled quasilinear {SPDE}s.
\newblock {\em Ann. Probab.}, 47(2):1096--1135, 2019.

\bibitem[FOT11]{Fukushima2011}
Masatoshi Fukushima, Yoichi Oshima, and Masayoshi Takeda.
\newblock {\em Dirichlet forms and symmetric {M}arkov processes}, volume~19 of
  {\em De Gruyter Studies in Mathematics}.
\newblock Walter de Gruyter \& Co., Berlin, extended edition, 2011.

\bibitem[Gar20]{Garban2018}
Christophe Garban.
\newblock Dynamical {L}iouville.
\newblock {\em J. Funct. Anal.}, 278(6):108351, 54, 2020.

\bibitem[GH17]{GerencserQuasilinear}
M{\'a}t{\'e} Gerencs{\'e}r and Martin Hairer.
\newblock A solution theory for quasilinear singular {SPDEs}.
\newblock {\em Communications on Pure and Applied Mathematics}, 2017.

\bibitem[GIP15]{MR3406823}
Massimiliano Gubinelli, Peter Imkeller, and Nicolas Perkowski.
\newblock Paracontrolled distributions and singular {PDE}s.
\newblock {\em Forum Math. Pi}, 3:e6, 75, 2015.

\bibitem[GRV14]{MR3272329}
Christophe Garban, R\'{e}mi Rhodes, and Vincent Vargas.
\newblock On the heat kernel and the {D}irichlet form of {L}iouville {B}rownian
  motion.
\newblock {\em Electron. J. Probab.}, 19:no. 96, 25, 2014.

\bibitem[GRV16]{MR3531686}
Christophe Garban, R\'{e}mi Rhodes, and Vincent Vargas.
\newblock Liouville {B}rownian motion.
\newblock {\em Ann. Probab.}, 44(4):3076--3110, 2016.

\bibitem[GRV19]{guillarmou2016polyakov}
Colin Guillarmou, R\'{e}mi Rhodes, and Vincent Vargas.
\newblock Polyakov's formulation of {$2d$} bosonic string theory.
\newblock {\em Publ. Math. Inst. Hautes \'{E}tudes Sci.}, 130:111--185, 2019.

\bibitem[Hai14]{Regularity}
Martin Hairer.
\newblock A theory of regularity structures.
\newblock {\em Invent. Math.}, 198(2):269--504, 2014.

\bibitem[Hai16]{hairer2016motion}
Martin Hairer.
\newblock The motion of a random string.
\newblock {\em arXiv preprint arXiv:1605.02192}, 2016.

\bibitem[Ham88]{MR954419}
Richard~S. Hamilton.
\newblock The {R}icci flow on surfaces.
\newblock In {\em Mathematics and general relativity ({S}anta {C}ruz, {CA},
  1986)}, volume~71 of {\em Contemp. Math.}, pages 237--262. Amer. Math. Soc.,
  Providence, RI, 1988.

\bibitem[HeK71]{HoeghKrohn}
Raphael H\o~egh Krohn.
\newblock A general class of quantum fields without cut-offs in two space-time
  dimensions.
\newblock {\em Comm. Math. Phys.}, 21:244--255, 1971.

\bibitem[HKK19]{hoshino2019stochastic}
Masato Hoshino, Hiroshi Kawabi, and Seiichiro Kusuoka.
\newblock Stochastic quantization associated with the $\exp (\phi)_2$-quantum
  field model driven by space-time white noise on the torus.
\newblock {\em arXiv preprint arXiv:1907.07921}, 2019.

\bibitem[HRV18]{MR3825895}
Yichao Huang, R\'{e}mi Rhodes, and Vincent Vargas.
\newblock Liouville quantum gravity on the unit disk.
\newblock {\em Ann. Inst. Henri Poincar\'{e} Probab. Stat.}, 54(3):1694--1730,
  2018.

\bibitem[HS16]{MR3452276}
Martin Hairer and Hao Shen.
\newblock The dynamical sine-{G}ordon model.
\newblock {\em Comm. Math. Phys.}, 341(3):933--989, 2016.

\bibitem[Kah85]{MR829798}
Jean-Pierre Kahane.
\newblock Sur le chaos multiplicatif.
\newblock {\em Ann. Sci. Math. Qu\'{e}bec}, 9(2):105--150, 1985.

\bibitem[Kup16a]{kupiainen2016constructive}
Antti Kupiainen.
\newblock Constructive {L}iouville conformal field theory.
\newblock {\em arXiv preprint arXiv:1611.05243}, 2016.

\bibitem[Kup16b]{Kupiainen}
Antti Kupiainen.
\newblock Renormalization group and stochastic {PDE}s.
\newblock {\em Ann. Henri Poincar\'{e}}, 17(3):497--535, 2016.

\bibitem[Lam67]{lamperti1967continuous}
John Lamperti.
\newblock Continuous state branching processes.
\newblock {\em Bulletin of the American Mathematical Society}, 73(3):382--386,
  1967.

\bibitem[Lam07]{lambert2007quasi}
Amaury Lambert.
\newblock Quasi-stationary distributions and the continuous-state branching
  process conditioned to be never extinct.
\newblock {\em Electronic Journal of Probability}, 12:420--446, 2007.

\bibitem[MRVZ16]{MR3531710}
P.~Maillard, R.~Rhodes, V.~Vargas, and O.~Zeitouni.
\newblock Liouville heat kernel: regularity and bounds.
\newblock {\em Ann. Inst. Henri Poincar\'{e} Probab. Stat.}, 52(3):1281--1320,
  2016.

\bibitem[Nak04]{Nak_Liouv}
Yu~Nakayama.
\newblock Liouville field theory: a decade after the revolution.
\newblock {\em Internat. J. Modern Phys. A}, 19(17-18):2771--2930, 2004.

\bibitem[OPS88]{OPS}
B.~Osgood, R.~Phillips, and P.~Sarnak.
\newblock Extremals of determinants of {L}aplacians.
\newblock {\em J. Funct. Anal.}, 80(1):148--211, 1988.

\bibitem[ORW19]{oh2019parabolic}
Tadahiro Oh, Tristan Robert, and Yuzhao Wang.
\newblock On the parabolic and hyperbolic {L}iouville equations.
\newblock {\em arXiv preprint arXiv:1908.03944}, 2019.

\bibitem[OSSW]{OttoSauerSmithWeber}
Felix Otto, Jonas Sauer, Scott Smith, and Hendrik Weber.
\newblock Parabolic equations with low regularity coefficients and rough
  forcing.
\newblock {\em In Preparation}.

\bibitem[OW18]{otto2018quasilinear}
Felix Otto and Hendrik Weber.
\newblock Quasilinear {SPDEs} via rough paths.
\newblock {\em Archive for Rational Mechanics and Analysis}, pages 1--78, 2018.

\bibitem[Pol81]{polyakov1981quantum}
Alexander~M Polyakov.
\newblock Quantum geometry of bosonic strings.
\newblock {\em Physics Letters B}, 103(3):207--210, 1981.

\bibitem[PSSW20]{phong2014ricci}
D.~H. Phong, Jian Song, Jacob Sturm, and Xiaowei Wang.
\newblock The {R}icci flow on the sphere with marked points.
\newblock {\em J. Differential Geom.}, 114(1):117--170, 2020.

\bibitem[PW81]{ParisiWu}
G.~Parisi and Yong~Shi Wu.
\newblock Perturbation theory without gauge fixing.
\newblock {\em Sci. Sinica}, 24(4):483--496, 1981.

\bibitem[Rem18]{MR3843631}
Guillaume Remy.
\newblock Liouville quantum gravity on the annulus.
\newblock {\em J. Math. Phys.}, 59(8):082303, 26, 2018.

\bibitem[RV10]{MR2642887}
Raoul Robert and Vincent Vargas.
\newblock Gaussian multiplicative chaos revisited.
\newblock {\em Ann. Probab.}, 38(2):605--631, 2010.

\bibitem[RV14]{MR3274356}
R\'{e}mi Rhodes and Vincent Vargas.
\newblock Gaussian multiplicative chaos and applications: a review.
\newblock {\em Probab. Surv.}, 11:315--392, 2014.

\bibitem[RWZZ20]{rockner2017stochastic}
Michael R\"{o}ckner, Bo~Wu, Rongchan Zhu, and Xiangchan Zhu.
\newblock Stochastic heat equations with values in a manifold via {D}irichlet
  forms.
\newblock {\em SIAM J. Math. Anal.}, 52(3):2237--2274, 2020.

\bibitem[RZZ17]{MR3631399}
Michael R\"{o}ckner, Rongchan Zhu, and Xiangchan Zhu.
\newblock Ergodicity for the stochastic quantization problems on the
  2{D}-torus.
\newblock {\em Comm. Math. Phys.}, 352(3):1061--1090, 2017.

\bibitem[Sha16]{MR3475456}
Alexander Shamov.
\newblock On {G}aussian multiplicative chaos.
\newblock {\em J. Funct. Anal.}, 270(9):3224--3261, 2016.

\bibitem[TT06]{MR2249798}
Leon~A. Takhtajan and Lee-Peng Teo.
\newblock Quantum {L}iouville theory in the background field formalism. {I}.
  {C}ompact {R}iemann surfaces.
\newblock {\em Comm. Math. Phys.}, 268(1):135--197, 2006.

\bibitem[ZZ18a]{MR3858906}
Rongchan Zhu and Xiangchan Zhu.
\newblock Dirichlet form associated with the {$\Phi^4_3$} model.
\newblock {\em Electron. J. Probab.}, 23:Paper No. 78, 31, 2018.

\bibitem[ZZ18b]{MR3828197}
Rongchan Zhu and Xiangchan Zhu.
\newblock Recent progress on the {D}irichlet forms associated with stochastic
  quantization problems.
\newblock In {\em Stochastic partial differential equations and related
  fields}, volume 229 of {\em Springer Proc. Math. Stat.}, pages 561--571.
  Springer, Cham, 2018.

\end{thebibliography}

\end{document}